\theoremstyle{theorem}
\newtheorem{theorem}{Theorem}[subsection]
\newtheorem{proposition}[theorem]{Proposition}
\newtheorem{lemma}[theorem]{Lemma}
\newtheorem{corollary}[theorem]{Corollary}
\newtheorem{conjecture}{Conjecture}
\theoremstyle{definition}
\newtheorem{definition}[theorem]{Definition}
\newtheorem*{ack}{Acknowledgements}
\newtheorem{note}[theorem]{Note}
\newtheorem{example}[theorem]{Example}
\newtheorem{question}{Question}
\title{Quadratic $d$-numbers}
\date{\today}
\author{Andrew Schopieray}
\begin{document}

\maketitle

\begin{abstract}
Here we constructively classify quadratic $d$-numbers: algebraic integers in quadratic number fields generating Galois-invariant ideals.  We prove the subset thereof maximal among their Galois conjugates in absolute value is discrete in $\mathbb{R}$.  Our classification provides a characterization of those real quadratic fields containing a unit of norm -1 which is known to be equivalent to the existence of solutions to the negative Pell equation.  The notion of a weakly quadratic fusion category is introduced whose Frobenius-Perron dimension necessarily lies in this discrete set.  Factorization, divisibility, and boundedness results are proven for quadratic $d$-numbers allowing a systematic study of weakly quadratic fusion categories which constitute essentially all known examples of fusion categories having no known connection to classical representation theory.
\end{abstract}

\begin{section}{Introduction}
\par The set of $d$-numbers, in the sense of \cite[Definition 1.1]{codegrees}, are those algebraic integers generating Galois-invariant ideals in the ring of all algebraic integers.  Our main result is Theorem \ref{genthm} (together with Example \ref{excomplex}): a complete classification of $d$-numbers in quadratic extensions of $\mathbb{Q}$.  For a given real quadratic extension $\mathbb{Q}(\sqrt{N})$ for some square-free $N\in\mathbb{Z}_{\geq2}$, the classification is naturally divided based on the norm of the fundamental unit, which translates to a characterization of the solvability of the negative Pell equation (Corollary \ref{pell}) dependent on the nonexistence of a $d$-number of square-free norm not dividing $N$.  As a result of Theorem \ref{genthm}, we prove (Corollary \ref{corthree}) that quadratic $d$-numbers have a unique factorization and (Theorem \ref{dis}) the subset of quadratic $d$-numbers $\alpha$ with nontrivial Galois conjugate $\alpha'$ such that $\alpha\geq\alpha'\geq1$ is a discrete subset of $\mathbb{R}$ and we provide a quartic polynomial bound in $M\in\mathbb{Z}_{\geq1}$ for the cardinality of this set contained in the interval $[1,M]$ (Proposition \ref{boond}).

\par The main application of these results apart from number-theoretic interest is the study of fusion categories (over $\mathbb{C}$) which arise naturally from the representation theory of finite groups, quantum groups, and the study of subfactors of von Neumann algebras.  Formal codegrees, including global and Frobenius-Perron dimensions of fusion categories \cite{codegrees}, and higher Gauss sums of premodular categories \cite{highergauss} are examples of $d$-numbers.  In the ongoing search for small-index subfactors, the relative scarcity of $d$-numbers among all algebraic integers has been used to eliminate spurious candidates from classification results \cite{calegari,vines}.  As such, our results have implications for the study of modular tensor categories which are requisite objects of study in rational conformal field theory and vertex operator algebras \cite{conformal,kawaconformal}, quantum computation \cite{ericwang} and invariants of knots and links \cite{turaev}.  The above references are not meant to be comprehensive, but serve as a gateway for the interested reader to the depths of the current literature. 

\par There is a wealth of research which has attempted to characterize families of \emph{weakly integral} fusion categories, i.e.\!\! those whose Frobenius-Perron dimension is a positive integer \cite{paul5,paul3,paul2,dong1,pq,jordan,pointed,natale2014,natale2016}.  The family of weakly integral metaplectic modular tensor categories related to the representation theory of special orthogonal groups has received particular attention \cite{metaplectic,2019arXiv190104462D,2018arXiv180800698G}.  It is conjectured that weakly integral braided fusion categories are unique among braided fusion categories in that their associated braid group representations factor over a finite group \cite{finiteness}.  Such a vast collection of research is possible because, among other reasons, all $d$-numbers in $\mathbb{Q}$ are classified (integers) and have unique factorization (the fundamental theorem of arithmetic).  Outside of $\mathbb{Q}$ the situation is understandably more complicated.  Despite $d$-numbers in real quadratic number fields being dense in $\mathbb{R}$, those which are potentially Frobenius-Perron dimensions of fusion categories form a discrete set (Theorem \ref{dis}).  This allows one to comprehensively list prospective Frobenius-Perron dimensions of fusion categories which lie in a quadratic extension of $\mathbb{Q}$ (see Example \ref{threeplus}).  The major leap in difficulty for such a result is that these $d$-numbers are now being drawn from an infinite collection of fields, which makes Theorem \ref{dis} quite surprising.  Motivated by the existing literature we label fusion categories whose Frobenius-Perron dimension lies in a quadratic extension of $\mathbb{Q}$ as \emph{weakly quadratic} fusion categories.  We initiate their systematic study by proving factorization and divisibility conditions for quadratic $d$-numbers (Corollary \ref{corthree}, Proposition \ref{lemdiv}) and strong lower bounds on the Frobenius-Perron dimension of a weakly quadratic fusion category which lies in a quadratic field whose fundamental unit has norm $-1$ (Proposition \ref{thmbound}).  This provides evidence that there may exist real quadratic number fields that contain no Frobenius-Perron dimensions of fusion categories other than integers.  We finish by showing the non-existence of an infinite family pseudounitary weakly quadratic fusion categories with ``small'' factorizations.  All examples of fusion categories arising from the theory of subfactors automatically satisfy the assumption of pseudounitarity (more so, unitarity) as do generalized near-group categories \cite{thornton2012generalized} which are the largest class of weakly quadratic categories of conjectural existence.

\par Sections \ref{sec:quad} and \ref{dnum} describe all prerequisite knowledge of number theory used in the remainder of the exposition.  Sections \ref{sec:fus} and \ref{sec:dim} provide a basic introduction to fusion categories and different measurements of dimension; these sections are less self-contained than Sections \ref{sec:quad} and \ref{dnum}.  Section \ref{secclass} contains two of the main results of the paper: Theorem \ref{genthm} (classification of quadratic $d$-numbers) and Theorem \ref{dis} (discreteness of Galois maximal quadratic $d$-numbers).  Section \ref{sec:toward} contains corollaries of the classification and example classification results for small quadratic $d$-numbers.  Section \ref{sec:fam} describes the known families of weakly quadratic fusion categories in the context of Theorem \ref{genthm} and finally Section \ref{sec:non} contains a series of lemmas proving the non-existence of pseduounitary fusion categories of particular Frobenius-Perron dimension.  Lastly we finish with a discussion of further directions of research, open questions, and conjectures.
\end{section}

\begin{ack}
We would like to thank Terry Gannon, Pinhas Grossman, Masaki Izumi, Victor Ostrik, and Ana Ros Camacho for their comments during the preparation of this manuscript.
\end{ack}
 
 
\begin{section}{Preliminaries}

All considered fields $\mathbb{K}$ are algebraic over $\mathbb{Q}$, the rational numbers.  We use $[\mathbb{K}:\mathbb{Q}]$ to represent the degree of the field extension $\mathbb{K}/\mathbb{Q}$ (the dimension of $\mathbb{K}$ as a $\mathbb{Q}$-vector space).  For algebraic extensions this coincides with the cardinality of $\mathrm{Gal}(\mathbb{K}/\mathbb{Q})$, the group of $\mathbb{Q}$-invariant field automorphisms of $\mathbb{K}$.  If $\alpha\in\mathbb{K}$ is the root of a monic polynomial with integer coefficients then $\alpha$ is an \emph{algebraic integer}.  The set of algebraic integers in $\mathbb{K}$ has the structure of a ring $\mathcal{O}_\mathbb{K}$ while the total collection of algebraic integers (in $\overline{\mathbb{Q}}$, the algebraic closure of $\mathbb{Q}$) will be denoted $\mathbb{A}$.  Within our context $\|\cdot\|:\mathbb{K}\to\mathbb{Q}$, the field norm of $\mathbb{K}$ over $\mathbb{Q}$, may be computed for any $\alpha\in\mathbb{K}$ by
\begin{equation}
\|\alpha\|=\prod_{\tau\in\mathrm{Gal}(\mathbb{K}/\mathbb{Q})}\tau(\alpha),
\end{equation}
which restricts to a map $\mathcal{O}_\mathbb{K}\to\mathcal{O}_\mathbb{Q}=\mathbb{Z}$.  The superscript $R^\times$ will be used to indicate the set of all invertible elements of a ring $R$.  In particular the elements $\alpha\in\mathcal{O}_\mathbb{K}^\times$ satisfy $\|\alpha\|=\pm1$.


\begin{subsection}{Quadratic number fields}\label{sec:quad}
Algebraic number fields $\mathbb{K}$ with $[\mathbb{K}:\mathbb{Q}]=2$ are in 1:1 correspondence with square-free integers $N\neq0,1$ such that $\mathbb{K}\cong\mathbb{Q}(\sqrt{N})$.  We abbreviate $\mathcal{O}_{\mathbb{Q}(\sqrt{N})}=:\mathcal{O}_N$.  It is well-known that
\begin{equation}
\mathcal{O}_N=\{a+b\omega:a,b\in\mathbb{Z}\}\text{ where }\omega=\left\{\begin{array}{ccl}\sqrt{N} & : & N\equiv2,3\pmod{4} \\ (1/2)(1+\sqrt{N}) & : & N\equiv1\pmod{4}\end{array}\right..
\end{equation}
Unless indicated otherwise, $\sigma$ will be the nontrivial element of $\mathrm{Gal}(\mathbb{Q}(\sqrt{N})/\mathbb{Q})$;  that is to say $\sigma(\sqrt{N})=-\sqrt{N}$.  Thus for all $a,b\in\mathbb{Q}$,
\begin{equation}
\|a+b\sqrt{N}\|=(a+b\sqrt{N})(a-b\sqrt{N})=a^2-Nb^2.
\end{equation}
For square-free $N\in\mathbb{Z}_{<0}$, $\mathcal{O}_N^\times$ is finite.  Specifically, $\mathcal{O}^\times_{-1}=\{\pm1,\pm i\}$, $\mathcal{O}^\times_{-3}=\{1,(1/2)(\pm1\pm\sqrt{-3})\}$, and $\mathcal{O}^\times_{N}=\{\pm1\}$ for $N\neq-1,-3$.  When $N\in\mathbb{Z}_{>0}$, Dirichlet's unit theorem \cite[Theorem 13.1.1]{alaca} implies there exists $\epsilon_N\in\mathcal{O}_N^\times$ which is unique subject to $|\epsilon_N|>1$, such that for all $u\in\mathcal{O}_N^\times$, $u=\pm\epsilon_N^m$ for some $m\in\mathbb{Z}$.  We will refer to $\epsilon_N$ as the \emph{fundamental unit} of $\mathcal{O}_N$.   There exist multiple algorithms to compute $\epsilon_N$ for a fixed square-free $N\in\mathbb{Z}_{\geq2}$ (see \cite{lenstra} for an expository look).  The table provided below will be useful for checking examples for small $N$ where we use the decomposition $\epsilon_N=(1/2)(t_N+u_N\sqrt{N})$ i.e. \!the minimal polynomial of $\epsilon_N$ is $x^2-t_Nx+\|\epsilon_N\|$.
\begin{figure}[H]
\centering
\begin{equation*}
\begin{array}{|c|c|c|c|c|c|c|c|c|c|c|c|c|c|c|c|c|c|}
\hline N & 2 & 3 & 5 & 6 & 7     & 10 & 11 & 13 & 14 & 15 & 17 & 19  & 21   & 22   & 23  & 26 & 29    \\\hline
	 t_N & 2 & 4 & 1 & 10 & 16 &  6  & 20 &  3  & 30 & 8   &  8 & 340 & 5     & 394 & 48  & 10 &  5  \\\hline
	 u_N & 2 & 2& 1 & 4   & 6   &  2  &  6  &  1  &  8  & 2   &  2 &  78  & 1    &  84  & 10  &  2  &  1  \\\hline
\hline N & 30 & 31     &  33  & 34 & 35 & 37 & 38 & 39  & 41 & 42 & 43    & 46    & 47  & 51   & 53 & 55    & 57 \\\hline
	 t_N & 22 & 3040 &  46  & 70 & 12  & 12 & 74& 50  & 64 & 26 & 6964 &48670& 96 & 100  & 7   &  178& 302\\\hline
	u_N  & 4  & 546   &  8    & 12 & 2   &  2  & 12 & 8   & 10 & 4   & 1062 &7176  & 14  & 14   & 1  &   24  &  40\\\hline
\end{array}
\end{equation*}
    \caption{Fundamental units $\epsilon_N=(1/2)(t_N+u_N\sqrt{N})$ for square-free $N\in\mathbb{Z}_{\geq2}$}%
    \label{figunit}%
\end{figure}
\end{subsection}


\begin{subsection}{$d$-numbers}\label{dnum}
The following definition was introduced by Ostrik in \cite{codegrees}.
\begin{definition}
An algebraic integer $\alpha$ is a \emph{$d$-number} if the ideal it generates in $\mathbb{A}$ is invariant under the action of the absolute Galois group $\mathrm{Gal}(\overline{\mathbb{Q}}/\mathbb{Q})$.
\end{definition}
The set of $d$-numbers in an algebraic number field $\mathbb{K}$ will be denoted $\mathfrak{D}_\mathbb{K}$, and if $\mathbb{K}=\mathbb{Q}(\sqrt{N})$ for some square-free integer $N$, $\mathfrak{D}_N$ for brevity.  There are many equivalent definitions of $d$-numbers.
\begin{lemma}[{\cite[Lemma 2.7]{codegrees}}]\label{def}
Let $\alpha\in\mathbb{A}$.  The following are equivalent:
\begin{enumerate}
\item[\textnormal{(i)}] $\alpha$ is a $d$-number;
\item[\textnormal{(ii)}] For any $\tau\in\mathrm{Gal}(\overline{\mathbb{Q}}/\mathbb{Q})$, $\alpha/\tau(\alpha)\in\mathbb{A}$;
\item[\textnormal{(iii)}] For any $\tau\in\mathrm{Gal}(\overline{\mathbb{Q}}/\mathbb{Q})$, $\alpha/\tau(\alpha)\in\mathbb{A}^\times$;
\item[\textnormal{(iv)}] There exists a positive integer $m$ such that $\alpha^m\in\mathbb{Z}\cdot\mathbb{A}^\times$;
\item[\textnormal{(v)}] Let $p(x)=x^n+a_1x^{n-1}+\cdots+a_n$ be the minimal polynomial of $\alpha$ over $\mathbb{Q}$ (so, $a_i\in\mathbb{Z}$).  Then for any $i=1,\ldots,n$ the number $(a_i)^n$ is divisible by $(a_n)^i$;
\item[\textnormal{(vi)}] There exists a polynomial $p(x)=x^m+a_1x^{m-1}+\cdots+a_m\in\mathbb{Z}[x]$ such that $p(\alpha)=0$ and for any $i=1,\ldots,m$ the number $(a_i)^m$ is divisible by $(a_m)^i$.
\end{enumerate}
\end{lemma}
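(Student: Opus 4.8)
The plan is to route every one of the six conditions through a single valuation-theoretic statement. Writing $\alpha_1,\dots,\alpha_n$ for the Galois conjugates of $\alpha$ (the roots of its minimal polynomial), let $(\star)$ denote the assertion that $w(\alpha_i)$ is independent of $i$ for every finite place $w$ of $\overline{\mathbb{Q}}$. Throughout I would use the standard facts that $\mathbb{A}$ is integrally closed in $\overline{\mathbb{Q}}$ and that $\gamma\in\overline{\mathbb{Q}}^\times$ lies in $\mathbb{A}$ precisely when $w(\gamma)\ge 0$ for all finite $w$, and in $\mathbb{A}^\times$ precisely when $w(\gamma)=0$ for all finite $w$. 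Since $\mathrm{Gal}(\overline{\mathbb{Q}}/\mathbb{Q})$ acts transitively on $\{\alpha_1,\dots,\alpha_n\}$ and permutes the finite places over a fixed rational prime, condition (iii) is exactly the statement that $w(\alpha)=w(\tau(\alpha))$ for all $\tau$ and all $w$, which is $(\star)$. I may assume $\alpha\neq 0$, the case $\alpha=0$ being trivial.

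First I would dispatch the ideal-theoretic equivalences. For (i)$\Leftrightarrow$(iii): since $\tau$ fixes $\mathbb{A}$ setwise, $\tau(\alpha\mathbb{A})=\tau(\alpha)\mathbb{A}$, so Galois-invariance of $\alpha\mathbb{A}$ for all $\tau$ is equivalent to $\alpha\mathbb{A}=\tau(\alpha)\mathbb{A}$ for all $\tau$, i.e.\ to $\alpha/\tau(\alpha)\in\mathbb{A}^\times$. For (ii)$\Rightarrow$(iii) I would apply the hypothesis to $\tau^{-1}$ and then act by $\tau$ to obtain $\tau(\alpha)/\alpha\in\mathbb{A}$ as well, forcing the ratio to be a unit; (iii)$\Rightarrow$(ii) is immediate. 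For (iii)$\Rightarrow$(iv), take $m=n$: the product $\prod_i(\alpha/\alpha_i)=\alpha^n/\prod_i\alpha_i$ is a product of units, and $\prod_i\alpha_i=\pm a_n\in\mathbb{Z}$, whence $\alpha^n\in\mathbb{Z}\cdot\mathbb{A}^\times$. For the converse (iv)$\Rightarrow$(iii), if $\alpha^m=c\,u$ with $c\in\mathbb{Z}$ and $u\in\mathbb{A}^\times$, then $(\alpha/\tau(\alpha))^m=u/\tau(u)\in\mathbb{A}^\times$; since $\mathbb{A}$ is integrally closed and $\alpha/\tau(\alpha)$ is a root of the monic $X^m-u/\tau(u)$ over $\mathbb{A}$, both $\alpha/\tau(\alpha)$ and its inverse lie in $\mathbb{A}$, so the ratio is a unit.

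Next I would connect $(\star)$ to the coefficient conditions. For $(\star)\Rightarrow$(v), fix a rational prime $\ell$ and a place $w$ over it normalized so that $w|_{\mathbb{Q}}=v_\ell$, and let $\gamma$ be the common value $w(\alpha_i)$. Because $a_n=\pm\prod_i\alpha_i$ and each $a_i$ is $\pm$ the $i$-th elementary symmetric function of the $\alpha_j$, the ultrametric inequality gives $v_\ell(a_n)=n\gamma$ and $v_\ell(a_i)\ge i\gamma$, hence $n\,v_\ell(a_i)\ge i\,v_\ell(a_n)$ for every $\ell$, which is precisely $(a_n)^i\mid(a_i)^n$. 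The implication (v)$\Rightarrow$(vi) is trivial, taking $p$ to be the minimal polynomial.

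The hard part will be (vi)$\Rightarrow$(iii), since $p$ need not be irreducible and so its roots need not all be conjugate to $\alpha$, which blocks any direct appeal to $(\star)$. Here I would use Newton polygons: at each prime $\ell$, the divisibility $(a_m)^i\mid(a_i)^m$ says exactly that each point $(i,v_\ell(a_i))$ (with $a_0=1$) lies on or above the segment joining $(0,0)$ to $(m,v_\ell(a_m))$, i.e.\ that the $\ell$-adic Newton polygon of $p$ is the single segment of slope $v_\ell(a_m)/m$. By the theory of Newton polygons this forces all $m$ roots of $p$ in $\overline{\mathbb{Q}_\ell}$ to have $\ell$-adic valuation $v_\ell(a_m)/m$. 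Transporting this through the embeddings $\overline{\mathbb{Q}}\hookrightarrow\overline{\mathbb{Q}_\ell}$ that realize the places over $\ell$, every root of $p$ in $\overline{\mathbb{Q}}$, and in particular every conjugate $\alpha_i$, acquires the same valuation at every place over $\ell$; letting $\ell$ vary yields $(\star)$, hence (iii). Assembling (i)$\Leftrightarrow$(iii)$\Leftrightarrow(\star)$, (iii)$\Leftrightarrow$(ii), (iii)$\Leftrightarrow$(iv), and $(\star)\Rightarrow$(v)$\Rightarrow$(vi)$\Rightarrow$(iii) then closes the cycle.
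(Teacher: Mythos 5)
Your proposal is correct, but there is no in-paper argument to measure it against: the paper states Lemma \ref{def} purely as a quotation of \cite[Lemma 2.7]{codegrees} and supplies no proof of its own, so the comparison can only be with that cited source. Judged on its merits, your valuation-theoretic route is sound. The reformulation $(\star)$ correctly captures (iii) via the standard characterizations $\gamma\in\mathbb{A}\Leftrightarrow w(\gamma)\geq0$ and $\gamma\in\mathbb{A}^\times\Leftrightarrow w(\gamma)=0$ for all finite places $w$; the cycle (i)$\Leftrightarrow$(iii), (ii)$\Leftrightarrow$(iii), (iii)$\Leftrightarrow$(iv), $(\star)\Rightarrow$(v)$\Rightarrow$(vi)$\Rightarrow$(iii) genuinely covers all six conditions; and you correctly identified the one delicate step, (vi)$\Rightarrow$(iii), where the witnessing polynomial need not be irreducible: the divisibility hypothesis $m\,v_\ell(a_i)\geq i\,v_\ell(a_m)$ forces a one-segment Newton polygon at every prime $\ell$, hence a common $\ell$-adic valuation for \emph{all} roots of $p$ under every embedding $\overline{\mathbb{Q}}\hookrightarrow\overline{\mathbb{Q}_\ell}$, which yields $(\star)$ since the conjugates of $\alpha$ are among those roots. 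Two details confirm the edge cases close: under (vi) with $\alpha\neq0$ one necessarily has $a_m\neq0$ (otherwise $0=(a_m)^i$ divides $(a_i)^m$ forces all $a_i=0$ and $\alpha=0$), so the Newton polygon is well defined; and your (iv)$\Rightarrow$(iii) correctly leans on $\mathbb{A}$ being integrally closed to pass from $(\alpha/\tau(\alpha))^m\in\mathbb{A}^\times$ to $\alpha/\tau(\alpha)\in\mathbb{A}^\times$. It is also worth noting that the single fragment of Ostrik's proof that does surface in this paper --- the identity $\alpha^{\ell}=\|\alpha\|u$ with $u$ a unit, used to prove Lemma \ref{lemmagal} --- is exactly your (iii)$\Rightarrow$(iv) step of multiplying the unit ratios $\alpha/\alpha_i$ over all conjugates, so on the one point of overlap your argument and the cited one agree.
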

Some examples follow immediately from the above equivalences.  Part (iv) with $m=1$ implies all integer multiples of units are $d$-numbers.  One can use (vi) to discover $d$-numbers not of this type.  For example, let $m\in\mathbb{Z}_{\geq1}$ and $n\in\mathbb{Z}$, and define
\begin{equation}
p_{n,m}(x):=x^m+nx^{m-1}+\cdots nx+n.
\end{equation}
All roots of all $p_{n,m}$ are $d$-numbers, although this sporadic method of construction is not very useful for classification purposes.  
The following definition and lemma are insinuated and used in the proof of \cite[Lemma 2.7]{codegrees}.
\begin{definition}\label{deforder}
If $\alpha\in\mathfrak{D}_\mathbb{K}$, we say the least $k\in\mathbb{Z}_{\geq1}$ such that $\alpha^k=nu$ for some $n\in\mathbb{Z}$ and $u\in\mathcal{O}_\mathbb{K}^\times$ is $\mathrm{ord}(\alpha)$, the \emph{order} of $\alpha$ (Lemma \ref{def} (iv) implies existence).
\end{definition}
Usage of the term \emph{order} is appropriate by offering a slightly more generalized, but not necessarily more useful, definition of $d$-numbers (refer also to Note \ref{ostriknote}).  Casually, we use the term \emph{generalized $d$-number} to describe any algebraic \emph{number} $\alpha$ such that there exists $k\in\mathbb{Z}_{\geq1}$ with $\alpha^k\in\mathbb{Q}\cdot\mathcal{O}_\mathbb{K}^\times$.  Generalized $d$-numbers form an abelian group under multiplication, unlike $d$-numbers which are not closed under inversion.  The quotient by the subgroup $\mathbb{Q}\cdot\mathcal{O}_\mathbb{K}^\times$ will be denoted $\mathfrak{G}_\mathbb{K}$.

\begin{lemma}[{cf. \!\cite[Lemma 2.7 proof]{codegrees}}]\label{lemmagal}
Let $\mathbb{K}$ be an algebraic number field.  If $\alpha\in\mathfrak{G}_\mathbb{K}$, then $\mathrm{ord}(\alpha)$ divides $[\mathbb{K}:\mathbb{Q}]$.
\end{lemma}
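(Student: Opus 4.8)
The plan is to translate the defining condition into the language of fractional ideals and then exploit the fundamental ramification identity. First I would observe that, for $\alpha$ representing a class in $\mathfrak{G}_\mathbb{K}$, the condition $\alpha^k\in\mathbb{Q}\cdot\mathcal{O}_\mathbb{K}^\times$ is equivalent to the principal fractional ideal $(\alpha)^k=(\alpha^k)$ being of the form $(q)$ for some $q\in\mathbb{Q}^\times$, since multiplying by a unit of $\mathcal{O}_\mathbb{K}$ does not change the generated ideal. Thus $\mathrm{ord}(\alpha)$ is the least $k\geq1$ for which $(\alpha)^k$ is extended from $\mathbb{Z}$, and the entire problem becomes one about exponents in the prime factorization of $(\alpha)$.

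Next I would factor $(\alpha)=\prod_\mathfrak{p}\mathfrak{p}^{v_\mathfrak{p}(\alpha)}$ and, for each rational prime $p$, recall that $p\mathcal{O}_\mathbb{K}=\prod_{\mathfrak{p}\mid p}\mathfrak{p}^{e(\mathfrak{p}/p)}$. Comparing factorizations, $(\alpha)^k$ is extended from $\mathbb{Z}$ precisely when, for every $p$, there is an integer $w_p$ with $k\,v_\mathfrak{p}(\alpha)=e(\mathfrak{p}/p)\,w_p$ for all $\mathfrak{p}\mid p$. In particular the mere existence of one such $k$ (guaranteed since $\alpha\in\mathfrak{G}_\mathbb{K}$) forces the ratio $r_p:=v_\mathfrak{p}(\alpha)/e(\mathfrak{p}/p)$ to be independent of the choice of $\mathfrak{p}$ above $p$. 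Writing $r_p$ in lowest terms with denominator $d_p$, the admissible $k$ are exactly those with $k\,r_p\in\mathbb{Z}$ for all $p$, so that $\mathrm{ord}(\alpha)=\mathrm{lcm}_p\, d_p$.

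It then remains to bound each $d_p$. Since $r_p=v_\mathfrak{p}(\alpha)/e(\mathfrak{p}/p)$, the reduced denominator $d_p$ divides $e(\mathfrak{p}/p)$ for every $\mathfrak{p}\mid p$, and hence divides each product $e(\mathfrak{p}/p)\,f(\mathfrak{p}/p)$. Invoking the fundamental identity $\sum_{\mathfrak{p}\mid p}e(\mathfrak{p}/p)\,f(\mathfrak{p}/p)=[\mathbb{K}:\mathbb{Q}]$, I conclude that $d_p$, dividing every summand, divides the sum $[\mathbb{K}:\mathbb{Q}]$. As the least common multiple of divisors of an integer $n$ again divides $n$, it follows that $\mathrm{ord}(\alpha)=\mathrm{lcm}_p\, d_p$ divides $[\mathbb{K}:\mathbb{Q}]$, as claimed.

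The step I expect to be the main obstacle is this last divisibility in the non-Galois case: when $\mathbb{K}/\mathbb{Q}$ is not normal the individual ramification indices $e(\mathfrak{p}/p)$ need not divide $[\mathbb{K}:\mathbb{Q}]$, so one cannot argue directly from $d_p\mid e(\mathfrak{p}/p)$. Routing the divisibility through the summands of the fundamental identity circumvents this cleanly. In the Galois situation governing the quadratic fields of interest the argument simplifies considerably: $(\alpha)$ is then an honestly $\mathrm{Gal}(\mathbb{K}/\mathbb{Q})$-invariant ideal, each $e(\mathfrak{p}/p)$ divides $[\mathbb{K}:\mathbb{Q}]$ outright, and for $[\mathbb{K}:\mathbb{Q}]=2$ one recovers at once that $\mathrm{ord}(\alpha)\in\{1,2\}$.
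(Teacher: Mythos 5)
Your proof is correct, and it takes a genuinely different route from the paper's. The paper argues multiplicatively with Galois conjugates: writing $\alpha_1,\ldots,\alpha_\ell$ for the conjugates of $\alpha$, Lemma \ref{def} (iii) makes each quotient $\alpha/\alpha_i$ a unit, so multiplying over all conjugates gives $\alpha^\ell=\|\alpha\|u$ with $u\in\mathcal{O}_\mathbb{K}^\times$; hence the class of $\alpha$ in $\mathfrak{G}_\mathbb{K}$ has order dividing $\ell=[\mathbb{Q}(\alpha):\mathbb{Q}]$, which divides $[\mathbb{K}:\mathbb{Q}]$ by the tower law. You instead work with the prime factorization of the fractional ideal $(\alpha)$, and this buys two things the paper's two-line argument does not provide: an exact formula $\mathrm{ord}(\alpha)=\mathrm{lcm}_p\,d_p$, where $d_p$ is the reduced denominator of $v_\mathfrak{p}(\alpha)/e(\mathfrak{p}/p)$, and an argument that stays entirely inside $\mathbb{K}$, with no appeal to conjugates (which may lie outside $\mathbb{K}$ when $\mathbb{K}/\mathbb{Q}$ is not normal) and no reliance on the unit property of Lemma \ref{def} (iii). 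Your handling of the non-Galois case---routing the divisibility $d_p\mid[\mathbb{K}:\mathbb{Q}]$ through the summands of the fundamental identity $\sum_{\mathfrak{p}\mid p}e(\mathfrak{p}/p)f(\mathfrak{p}/p)=[\mathbb{K}:\mathbb{Q}]$ rather than through any single ramification index---is exactly the right fix for the step you flag as the obstacle. What the paper's approach buys in exchange is brevity and, for free, the finer conclusion that $\mathrm{ord}(\alpha)$ divides $[\mathbb{Q}(\alpha):\mathbb{Q}]$ itself; your formula recovers this too, since the order is intrinsic to $\alpha$ and one may apply your argument with $\mathbb{K}$ replaced by $\mathbb{Q}(\alpha)$.
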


\begin{proof}
If $\alpha_1,\ldots,\alpha_\ell$ are the Galois conjugates of $\alpha$, Lemma \ref{def} (iii) implies $\alpha/\alpha_i\in\mathcal{O}_\mathbb{K}^\times$ for $i=1,\ldots,\ell$, hence $\alpha^\ell=\|\alpha\|u$ for some $u\in\mathcal{O}_\mathbb{K}^\times$ and $\ell$ divides $[\mathbb{K}:\mathbb{Q}]$.  Elementary group theory then implies $\mathrm{ord}(\alpha)$ divides $[\mathbb{K}:\mathbb{Q}]$ as well.
\end{proof}

The sets $\mathfrak{D}_N$ are amenable to study and classification because one can write explicit criteria for inclusion using Lemma \ref{def} (iii) and (v) and Lemma \ref{lemmagal}.
\begin{lemma}\label{lemma}
Let $N\in\mathbb{Z}_{\geq2}$ be square-free and $\alpha=a+b\sqrt{N}\in\mathcal{O}_N$ for some $a,b\in\frac{1}{2}\mathbb{Z}$. The following are equivalent:
\begin{enumerate}
\item[(i)] $\alpha\in\mathfrak{D}_N$;
\item[(ii)] $(a^2+Nb^2+2ab\sqrt{N})/(a^2-Nb^2)\in\mathcal{O}_N^\times$;
\item[(iii)] $\mathrm{ord}(\alpha)=1,2$;
\item[(iv)] $4a^2/(a^2-Nb^2)\in\mathbb{Z}$.
\end{enumerate}
\end{lemma}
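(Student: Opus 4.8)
The plan is to route every equivalence through the single auxiliary element $\beta := \alpha/\sigma(\alpha)$, which I claim is exactly the quantity appearing in (ii). Since $\|\alpha\|=\alpha\,\sigma(\alpha)=a^2-Nb^2$, one has
\[
\beta=\frac{\alpha}{\sigma(\alpha)}=\frac{\alpha^2}{\|\alpha\|}=\frac{a^2+Nb^2+2ab\sqrt{N}}{a^2-Nb^2},
\]
and the denominator is nonzero for $\alpha\neq0$ because $N$ square-free forces $a^2=Nb^2$ only when $a=b=0$. Thus the entire lemma reduces to deciding when $\beta$ lies in $\mathcal{O}_N^\times$.

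First I would establish (i)$\Leftrightarrow$(ii). As $[\mathbb{Q}(\sqrt{N}):\mathbb{Q}]=2$, the only nontrivial conjugation is $\sigma$, so as $\tau$ ranges over $\mathrm{Gal}(\overline{\mathbb{Q}}/\mathbb{Q})$ the ratio $\alpha/\tau(\alpha)$ takes only the values $1$ and $\beta$. Hence Lemma \ref{def}(iii) says $\alpha\in\mathfrak{D}_N$ iff $\beta\in\mathbb{A}^\times$. Since $\beta\in\mathbb{Q}(\sqrt{N})$ and $\mathbb{A}\cap\mathbb{Q}(\sqrt{N})=\mathcal{O}_N$ (and likewise for $\beta^{-1}$), membership of $\beta$ in $\mathbb{A}^\times$ coincides with membership in $\mathcal{O}_N^\times$, which is precisely condition (ii).

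The heart of the argument is (ii)$\Leftrightarrow$(iv), and the crucial simplification is that the field norm is Galois-invariant, so $\|\beta\|=\|\alpha\|/\|\sigma(\alpha)\|=1$ automatically. Consequently $\beta\in\mathcal{O}_N^\times$ collapses to the single condition $\beta\in\mathcal{O}_N$, i.e.\ $\beta$ being an algebraic integer. When $ab\neq0$ we have $\beta\notin\mathbb{Q}$ with minimal polynomial $x^2-(\beta+\sigma(\beta))x+\|\beta\|=x^2-(\beta+\sigma(\beta))x+1$, so integrality is equivalent to $\beta+\sigma(\beta)\in\mathbb{Z}$; writing $\beta+\sigma(\beta)=2(a^2+Nb^2)/(a^2-Nb^2)$, a one-line computation gives
\[
(\beta+\sigma(\beta))+2=\frac{2(a^2+Nb^2)+2(a^2-Nb^2)}{a^2-Nb^2}=\frac{4a^2}{a^2-Nb^2},
\]
so $\beta+\sigma(\beta)\in\mathbb{Z}$ iff (iv) holds. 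The degenerate case $ab=0$ (forcing $\alpha\in\mathbb{Q}$ or $\alpha\in\sqrt{N}\,\mathbb{Q}$) I would dispatch by hand: there $\beta=\pm1\in\mathcal{O}_N^\times$ while (iv) reads $4\in\mathbb{Z}$ or $0\in\mathbb{Z}$, so both (ii) and (iv) hold trivially.

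Finally, (i)$\Leftrightarrow$(iii) follows from the order machinery already in place. Lemma \ref{lemmagal} shows any $\alpha\in\mathfrak{D}_N$ has $\mathrm{ord}(\alpha)$ dividing $[\mathbb{Q}(\sqrt{N}):\mathbb{Q}]=2$, giving (i)$\Rightarrow$(iii); conversely, if $\alpha^k=nu$ with $k\in\{1,2\}$, $n\in\mathbb{Z}$, and $u\in\mathcal{O}_N^\times$, then $\alpha^k\in\mathbb{Z}\cdot\mathbb{A}^\times$ and Lemma \ref{def}(iv) returns (i). I expect no genuine obstacle in this proof once $\beta$ is introduced; the only points demanding care are the observation that $\|\beta\|$ is not merely $\pm1$ but exactly $1$ (which is what lets the unit condition degenerate to integrality), and the separate, routine treatment of the rational case $ab=0$.
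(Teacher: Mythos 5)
Your proof is correct, but it reaches condition (iv) by a genuinely different road than the paper intends. The sentence introducing the lemma in the paper signals that (iv) is meant to come straight from Lemma \ref{def}(v): when $b\neq0$ the minimal polynomial of $\alpha=a+b\sqrt{N}$ is $x^2-2ax+(a^2-Nb^2)$, and criterion (v) with $n=2$, $i=1$ asserts exactly that $(2a)^2=4a^2$ is divisible by $a^2-Nb^2$, which is (iv) verbatim; the equivalences (i)$\Leftrightarrow$(ii) and (i)$\Leftrightarrow$(iii) are obtained, as you do them, from Lemma \ref{def}(iii) and from Lemma \ref{lemmagal} together with Lemma \ref{def}(iv). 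You instead derive (iv) from (ii) through the element $\beta=\alpha/\sigma(\alpha)$: the observation that $\|\beta\|$ equals $1$ exactly (not merely $\pm1$) collapses the unit condition to integrality of $\beta$, which for irrational $\beta$ is integrality of the trace, and your identity $\bigl(\beta+\sigma(\beta)\bigr)+2=4a^2/(a^2-Nb^2)$ then produces (iv). What the paper's route buys is brevity: a single application of the ready-made criterion (v), with no case split and no auxiliary element. What your route buys is self-containedness (no appeal to criterion (v) at all) and a conceptual explanation of \emph{why} the unit condition in (ii) degenerates to one integrality condition, namely that the norm obstruction vanishes identically. The only costs on your side are the separate, easy disposal of the rational case $ab=0$ and the verification $\|\sigma(\alpha)\|=\|\alpha\|$, both of which you handle correctly, so the argument stands as written.
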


The set $\mathfrak{D}_\mathbb{K}$ has the structure of a monoid, but not a ring, with an exception of $\mathfrak{D}_\mathbb{Q}=\mathbb{Z}$.

\begin{example}
For any square-free $N\in\mathbb{Z}_{\geq2}$, $\mathfrak{D}_N$ is not closed under addition.  To see this, Lemma \ref{lemma} (iv) implies $\alpha:=a+b\sqrt{N}\not\in\mathfrak{D}_N$ if $4a^2/(a^2-Nb^2)\not\in\mathbb{Z}$.  Hence $b\sqrt{N}\in\mathfrak{D}_N$ for all $b\in\mathbb{Z}$ and $N\in\mathfrak{D}_N$.  We claim that $N+b\sqrt{N}$ is not a $d$-number for many choices of $b\in\mathbb{Z}$ since for fixed square-free $N\in\mathbb{Z}_{\geq2}$, any choice of $b\not\equiv N\pmod{2}$ implies $4N^2/(N^2-Nb^2)\not\in\mathbb{Z}$ and hence $N+b\sqrt{N}\not\in\mathfrak{D}_N$.
\end{example}

\begin{example}[Complex quadratic $d$-numbers]\label{excomplex}
Assume $N$ is a square-free negative integer different from $-1$ and $-3$, so that $\mathcal{O}_N^\times=\{\pm1\}$ (see Section \ref{sec:quad}).  Lemma \ref{lemma} (iii) states $\alpha\in\mathfrak{D}_N$ if and only if $\mathrm{ord}(\alpha)=1,2$.  In the latter case, $\alpha^2\in N\cdot\mathbb{Z}$ since $\mathbb{Q}(\alpha)\subset\mathbb{Q}(\sqrt{N})$.  Moreover
\begin{equation}
\mathfrak{D}_N=\{\ell(\sqrt{N})^\delta:\ell\in\mathbb{Z},\delta=0,1\}\qquad\text{ for square-free }N\in\mathbb{Z}_{<0},N\neq-1,-3.
\end{equation}
\par For $N=-1,-3$ we will use Lemma \ref{lemma} (ii), which implies $\alpha=a+bi\in\mathfrak{D}_{-1}$ for $a,b\in\mathbb{Z}$ if and only if one of four conditions are true, corresponding to the four units of $\mathcal{O}_{-1}$.  Two of the conditions are $a^2-b^2+2abi=\pm(a^2+b^2)$ hence $b(ai-b)=0$ or $a(bi+a)=0$.  This implies $\alpha\in\mathbb{Z}$ in the former case, and $\alpha\in i\cdot\mathbb{Z}$ in the latter.  The other two conditions are $a^2-b^2+2abi=\pm i(a^2+b^2)$, hence $(a-b)(a+b)=0$ or $(a-b)(a+b)=i(a+b)^2$.  The latter implies $\alpha\in(-1+i)\cdot\mathbb{Z}$ and the former implies $\alpha\in(1+i)\cdot\mathbb{Z}$.  Moreover
\begin{equation}
\mathfrak{D}_{-1}=\{\ell i^m(1\pm i)^\delta:\ell,m\in\mathbb{Z},\delta=0,1\}.
\end{equation}
\par Similarly $\alpha=a+b\sqrt{-3}\in\mathfrak{D}_{-3}$ for some $a,b\in\frac{1}{2}\mathbb{Z}$ if and only if one of Equations (\ref{ferst})--(\ref{laast}) is true; the first two correspond to the units $\pm1$, and the latter two corresponding to the units $\pm(1/2)(1\pm\sqrt{-3})$ and $\mp(1/2)(1\pm\sqrt{-3})$.  We must have
\begin{align}
b(3b+a\sqrt{-3})&=0, \label{ferst}\\
a(a+b\sqrt{-3})&=0, \label{mid1} \\
3(a+b)(a-b)&=\mp(a\pm b)(a\pm 3b)\sqrt{-3},\text{ or} \label{mid2}\\
(a-3b)(a+3b)&=\mp(a\pm b)(a\pm 3b)\sqrt{-3}\label{laast}.
\end{align}
Equations (\ref{ferst}) and (\ref{mid1}) imply $\alpha=\ell(\sqrt{-3})^\delta$ for some $\ell\in\mathbb{Z}$ and $\delta=0,1$.  Equation (\ref{mid2}) implies $a=\pm b$ in which case $\alpha\in(1/2)(1\pm\sqrt{-3})\cdot\mathbb{Z}$, while Equation (\ref{laast}) implies $a=\pm3b$ in which case $\alpha\in(1/2)(3\pm\sqrt{-3})\cdot\mathbb{Z}$.  These are not minimal generators as $\pm(1/2)(1\mp\sqrt{-3})\sqrt{-3}=(1/2)(3\pm\sqrt{-3})$.  Moreover
\begin{equation}
\mathfrak{D}_{-3}=\{\ell((1/2)(1
\pm\sqrt{-3}))^m(\sqrt{-3})^\delta:\ell,m\in\mathbb{Z},\delta=0,1\}.
\end{equation}
\end{example}
\end{subsection}


\begin{subsection}{Fusion categories}\label{sec:fus}
A fusion category (over $\mathbb{C}$) is a $\mathbb{C}$-linear semisimple rigid monoidal ($\otimes$) category with finitely-many simple objects and simple monoidal unit $\mathbbm{1}$.  Standard references for the subject are \cite[Chapter 9]{tcat} and \cite{ENO}.  For the purposes of this paper, the most important characteristic of fusion categories is their semisimple \emph{fusion rules}.  That is to say if $\{X_i\}_{i\in I}$ index the isomorphism classes of simple objects of a fusion category $\mathcal{C}$, $X_i\otimes X_j\cong\oplus_{i\in I}N_{ij}^kX_k$ for some \emph{fusion coefficients} $N_{ij}^k\in \mathbb{Z}_{\geq0}$.  This phenomenon can be studied on the level of \emph{fusion rings} by considering the Grothendieck rings of fusion categories, but the number-theoretic peculiarities we will be describing are only guaranteed when a fusion ring arises in this fashion (such a fusion ring is said to be \emph{categorifiable}). For non-experts, it is likely that the only unfamiliar adjective associated to fusion categories is ``rigid''.  An object $X$ in a monoidal category $\mathcal{C}$ is \emph{rigid} if $X$ has left and right \emph{dual} objects $X^\ast$ and $^\ast X$, respectively, and monoidal category $\mathcal{C}$ is rigid if all $X\in\mathcal{C}$ are rigid \cite[Section 2.10]{tcat}.  Dual objects are unique up to isomorphism, restricted by the fact that the functor $X^\ast\otimes-$ is left adjoint to $X\otimes-$, and $-\otimes\,^\ast X$ is right adjoint to $-\otimes X$.  The assumption of semisimplicity for a fusion category $\mathcal{C}$ ensures that $^\ast X\cong X^\ast$ for all  $X\in\mathcal{C}$ \cite[Proposition 4.8.1]{tcat} so we will choose to always write \emph{the} dual of any object $X$ in a fusion category $\mathcal{C}$ as $X^\ast$.  This ambidexterity of duality does not hold true in non-semisimple examples \cite[Example 7.19.5]{tcat}.  Morphisms in fusion categories are often illustrated by diagrams up to local isotopy subject to a graphical calculus \cite[Section 2.3]{BaKi}, where the above adjunctions for a given object $X$ give rise to ``cup'' ($\cup:\mathbbm{1}\to X\otimes X^\ast$) and ``cap'' ($\cap:X^\ast\otimes X\to\mathbbm{1}$) diagrams (morphisms) which satisfy the following compatibility in $\mathrm{End}(X)$:
\begin{equation}
\begin{tikzpicture}[scale=0.3]
\draw (-1,2) -- (-1,4);
\draw (-1,2) arc (180:360:0.5);
\draw (1,2) arc (0:180:0.5);
\draw (1,2) -- (1,0);
\end{tikzpicture}
\qquad
\begin{tikzpicture}[scale=0.3]
\node at (0,0) {\null};
\node at (0,2) {$=$};
\end{tikzpicture}
\qquad
\begin{tikzpicture}[scale=0.3]
\draw (0,0) -- (0,4);
\end{tikzpicture}
\qquad
\begin{tikzpicture}[scale=0.3]
\node at (0,0) {\null};
\node at (0,2) {$=$};
\end{tikzpicture}
\qquad
\begin{tikzpicture}[scale=0.3]
\draw (1,2) -- (1,4);
\draw (1,2) -- (1,4);
\draw (0,2) arc (180:360:0.5);
\draw (0,2) arc (0:180:0.5);
\draw (-1,2) -- (-1,0);
\end{tikzpicture}
.
\end{equation}
Vertical lines represent the identity morphism and morphisms are composed from bottom-to-top.  We will use these diagrams as short-hand in definitions but will not use them as a method of reasoning in proofs.

\begin{example}
Let $G$ be a finite group.  The category of complex $G$-graded vector spaces $\mathrm{Vec}_G$ is a fusion category whose simple objects are isomorphic to the 1-dimensional vector spaces indexed by elements of $g$, and fusion rules are given by group multiplication.  One can also equip a non-trivial associative isomorphism $\omega:G\times G\times G\to\mathbb{C}^\times$ to the group multiplication to produce the \emph{twisted} examples $\mathrm{Vec}^\omega_G$ \cite[Example 2.3.8]{tcat}.  Finite-dimensional complex representations of $\mathbb{C}G$ (or any finite-dimensional complex Hopf algebra) provide another family of examples \cite[Chapter 5]{tcat}.
\end{example}

Commutativity is realized in a fusion category $\mathcal{C}$ by the notion of a \emph{braiding}: a collection of natural isomorphisms $\sigma_{X,Y}:X\otimes Y\to Y\otimes X$ for all simple $X,Y\in\mathcal{C}$ satisfying compatibility constraints \cite[Definition 8.1.1]{tcat}.  These compatibilities ensure $\sigma_{-,-}$ (illustrated as a crossing $\begin{tikzpicture}[scale=0.22]
\braid a_1;
\end{tikzpicture}$ in the graphical calculus) satisfies the \emph{Yang-Baxter equation}
\begin{equation}
\begin{tikzpicture}[scale=0.4]
\braid a_1 a_2 a_1;
\end{tikzpicture}
\qquad
\begin{tikzpicture}[scale=0.4]
\node at (0,0) {\null};
\node at (0,1.5) {$=$};
\end{tikzpicture}
\qquad
\begin{tikzpicture}[scale=0.4]
\braid a_2 a_1 a_2;
\end{tikzpicture}
\end{equation}
as isomorphims $X\otimes Y\otimes X\to Z\otimes Y\otimes X$ for all simple $X,Y,Z\in\mathcal{C}$.  The two extremes of braided fusion categories are \emph{symmetric} braidings, for which $\sigma_{Y,X}\sigma_{X,Y}=\mathrm{id}_{X,Y}$ for all simple $X,Y\in\mathcal{C}$, and \emph{nondegenerate} braidings for which the \emph{symmetric center}, whose isomorphism classes of objects form the set
\begin{equation}
\mathcal{C}':=\{X\in\mathcal{C}:\sigma_{Y,X}\sigma_{X,Y}=\mathrm{id}_{X,Y}\text{ for all }Y\in\mathcal{C}\},
\end{equation}
is trivial, i.e. \!equivalent to $\mathrm{Vec}$, the category of complex finite-dimensional vector spaces (the unit object is always in the symmetric center).  All symmetric fusion categories ($\mathcal{C}=\mathcal{C}'$) are \emph{Tannakian} (braided equivalent to $\mathrm{Rep}(G)$ for a finite group $G$), or \emph{super-Tannakian} (Tannakian, with the braiding twisted by a central element of the finite group of order 2) by a result of Deligne \cite{deligne1,deligne2}.
\end{subsection}


\begin{subsection}{Notions of dimension in a fusion category}\label{sec:dim}

Denote the set of isomorphism classes of simple objects of a fusion category $\mathcal{C}$ by $\mathcal{O}(\mathcal{C})$ for brevity.  Every $X\in\mathcal{O}(\mathcal{C})$ is isomorphic to its double-dual $X^{\ast\ast}:=(X^\ast)^\ast$ \cite[Proposition 4.8.1]{tcat} via an isomorphism $a_X:X\to X^{\ast\ast}$.  One can then define, in the spirit of \cite[Section 2.2]{mug2}, the \emph{squared norm} of $X$ as
\begin{equation}
|X|^2:=\mathrm{Tr}_X(a_X)\mathrm{Tr}_{X^\ast}((a_X^{-1})^\ast),
\end{equation} 
where for any $f:X\to X^{\ast\ast}$, the (left) \emph{categorical trace} of $f$, $\mathrm{Tr}_X(f)\in\mathrm{End}(\mathbbm{1})$, is defined diagrammatically by
\begin{equation}
\begin{tikzpicture}[scale=0.5]
\draw (1,2) arc (0:180:1);
\draw (-1,1) arc (180:360:1);
\draw (-1,1) -- node[circle,pos=0.5,fill=white,draw=black,inner sep=0.5mm] {$f$} (-1,2);
\draw (1,1) -- (1,2);
\end{tikzpicture}
.
\end{equation}
The squared norm is independent of the choice of $a_X$ because $a_X$ itself is unique up to a scalar.  Therefore, one can assign a natural sense of size to each fusion category, the \emph{global dimension}
\begin{equation}
\dim(\mathcal{C}):=\sum_{X\in\mathcal{O}(\mathcal{C})}|X|^2.
\end{equation}
As shown in the proof of \cite[Theorem 2.3]{ENO}, $\dim(\mathcal{C})-1$ (and hence $\dim(\mathcal{C})$) is a totally positive algebraic integer.  A set of isomorphisms $a:=\{a_X:X\to X^{\ast\ast}\}_{X\in\mathcal{O}(\mathcal{C})}$ such that $a_{X\otimes Y}=a_X\otimes a_Y$ for all $X,Y\in\mathcal{O}(\mathcal{C})$ is known as a \emph{pivotal structure} and together with this additional structure $\mathcal{C}$ is a \emph{pivotal fusion category}.  We then define $\dim_a(X):=\mathrm{Tr}_X(a_X)$ for all $X\in\mathcal{O}(\mathcal{C})$ so that $\dim_a$ is a character of the Grothendieck ring of $\mathcal{C}$ and $|X|^2=\dim_a(X)\dim_a(X^\ast)=|\dim(X)|^2$ \cite[Proposition 2.9]{ENO}.  All known examples of fusion categories possess pivotal structures $a$ which naturally make $\dim_a(X)$ a square root of $|X|^2$.
\begin{definition}
Let $\mathcal{C}$ be a pivotal fusion category.  A pivotal structure $a$ on $\mathcal{C}$ is \emph{spherical} if $\dim_a(X)=\dim_a(X^\ast)$ for all $X\in\mathcal{O}(\mathcal{C})$.  In this case we refer to $\mathcal{C}$ as a \emph{spherical fusion category}.
\end{definition}
Thus $|X|^2=\dim_a(X)^2$ for all simple $X$ in a spherical fusion category $\mathcal{C}$.  The existence (or nonexistence) of a canonical spherical structure for any fusion category is a fundamental open question.
\begin{definition}
A \emph{modular tensor category} is a non-degenerately braided spherical fusion category.
\end{definition}
Modular tensor categories are ubiquitous as the \emph{Drinfeld center} $\mathcal{Z}(\mathcal{C})$ \cite[Section 8.5]{tcat} of any spherical fusion category $\mathcal{C}$ is a modular tensor category.  The dimensions of objects in modular tensor categories are strictly limited by their global dimension.
\begin{lemma}[Proposition 8.14.6 \cite{tcat}]\label{modular}
If $\mathcal{C}$ is a modular tensor category with pivotal structure $a$, then $\dim(\mathcal{C})/\dim_a(X)^2$ is an algebraic integer for all $X\in\mathcal{O}(\mathcal{C})$.
\end{lemma}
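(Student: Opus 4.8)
The plan is to identify $\dim(\mathcal{C})/\dim_a(X)^2$ with a \emph{formal codegree} of the Grothendieck ring $\mathrm{Gr}(\mathcal{C})$ and to compute it through the modular $S$-matrix, so that it manifestly becomes a sum of products of algebraic integers. Since $\mathcal{C}$ is braided, the complexified Grothendieck ring $\mathrm{Gr}(\mathcal{C})\otimes_{\mathbb{Z}}\mathbb{C}$ is a commutative semisimple $\mathbb{C}$-algebra, hence splits as a product of copies of $\mathbb{C}$ indexed by its algebra homomorphisms (characters) to $\mathbb{C}$. First I would recall the Verlinde formula, which in the modular case says these characters are exactly the functions $\chi_Y([X]) = S_{X,Y}/S_{\mathbbm{1},Y}$ indexed by $Y\in\mathcal{O}(\mathcal{C})$, where $S$ is the (unnormalized) $S$-matrix with $S_{X,\mathbbm{1}} = \dim_a(X)$; equivalently, the columns of $S$ are simultaneous eigenvectors of the fusion matrices $N_X := (N_{X,Y}^Z)_{Y,Z}$ with eigenvalues $S_{X,Y}/S_{\mathbbm{1},Y}$. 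In particular $\dim_a$ itself is the Perron--Frobenius character $\chi_{\mathbbm{1}}$.

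The second ingredient is that modularity (nondegeneracy of the braiding) is precisely invertibility of $S$, which yields the row orthogonality relation $\sum_{X\in\mathcal{O}(\mathcal{C})} S_{X,Y}\,\overline{S_{X,Z}} = \delta_{Y,Z}\dim(\mathcal{C})$, together with the symmetry $S_{X,Y} = S_{Y,X}$ and the conjugation rule $S_{X^\ast,Y} = \overline{S_{X,Y}}$ coming from duality. Because $S_{\mathbbm{1},Y} = \dim_a(Y)$ is real and positive, these give $\overline{\chi_Y([X])} = \chi_Y([X^\ast])$, and the orthogonality relation specializes, for each fixed index $X$, to
\begin{equation}
\frac{\dim(\mathcal{C})}{\dim_a(X)^2} = \frac{1}{S_{\mathbbm{1},X}^2}\sum_{Y\in\mathcal{O}(\mathcal{C})} S_{Y,X}\,\overline{S_{Y,X}} = \sum_{Y\in\mathcal{O}(\mathcal{C})}\chi_X([Y])\,\chi_X([Y^\ast]).
\end{equation}

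It then remains only to observe that each summand is an algebraic integer. Each fusion matrix $N_Y$ has entries $N_{Y,Z}^W\in\mathbb{Z}_{\geq0}$, so it is an integer matrix and all of its eigenvalues --- in particular $\chi_X([Y]) = S_{Y,X}/S_{\mathbbm{1},X}$ --- are algebraic integers; likewise $\chi_X([Y^\ast]) = \overline{\chi_X([Y])}$ is an eigenvalue of the integer matrix $N_{Y^\ast}$, hence an algebraic integer. Therefore the right-hand side above is a finite sum of products of algebraic integers and so is an algebraic integer, proving the claim. The substantive part of this argument is entirely the modular input --- the Verlinde formula identifying the characters with $S_{X,Y}/S_{\mathbbm{1},Y}$ and the $S$-orthogonality forced by nondegeneracy --- both standard for modular tensor categories; once these are in hand the algebraic-integrality is immediate, so I expect the main obstacle to be reconstructing or citing the $S$-matrix formalism rather than any new computation.
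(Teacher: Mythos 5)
Your proof is correct and takes the same route as the source the paper relies on: the paper gives no proof of this lemma at all, citing it as Proposition 8.14.6 of \cite{tcat}, and your argument --- the $S$-matrix columns diagonalize the integer fusion matrices so each $\chi_X([Y])=S_{Y,X}/S_{\mathbbm{1},X}$ is an algebraic integer, and $S$-matrix orthogonality (the content of modularity) gives $\dim(\mathcal{C})/\dim_a(X)^2=\sum_{Y}\chi_X([Y])\,\chi_X([Y^\ast])$ --- is precisely that standard proof. The one inaccuracy is your claim that $S_{\mathbbm{1},Y}=\dim_a(Y)$ is positive, which fails for non-pseudounitary modular categories (e.g.\ the Yang--Lee category); but the step in question needs only that it is real, which holds automatically since $\overline{S_{\mathbbm{1},Y}}=S_{\mathbbm{1}^\ast,Y}=S_{\mathbbm{1},Y}$, so nothing in the argument breaks.
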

There is another notion of dimension arising from the \emph{Frobenius-Perron theory} of fusion categories.  To this end, let $\mathcal{C}$ be a rank $n$ fusion category with simple objects $\mathbbm{1}=X_0,X_1,\ldots,X_{n-1}$.  For a fixed $0\leq i\leq n-1$, consider the \emph{fusion matrix} of nonnegative integers $M_i:=(N_{ij}^k)_{0\leq j,k\leq n-1}$.  Each $M_i$ has a maximal positive real eigenvalue by the Frobenius-Perron theorem \cite[Theorem 3.2.1 (1)]{tcat} which we will denote by $\mathrm{FPdim}(X_i)$ and extend linearly to any $X\in\mathcal{C}$.  Then, the Frobenius-Perron dimension of a fusion category can be defined as
\begin{equation}
\mathrm{FPdim}(\mathcal{C})=\sum_{X\in\mathcal{O}(\mathcal{C})}\mathrm{FPdim}(X)^2.
\end{equation}
\begin{lemma}[Proposition 8.15 \cite{ENO}]\label{subcategory}
Let $\mathcal{C}$ be a fusion category.  If $\mathcal{D}\subset\mathcal{C}$ is a fusion subcategory, then $\mathrm{FPdim}(\mathcal{C})/\mathrm{FPdim}(\mathcal{D})$ is an algebraic integer.
\end{lemma}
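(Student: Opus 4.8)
The plan is to work in the Grothendieck ring $\mathrm{Gr}(\mathcal{C})$ and its extension of scalars $\mathrm{Gr}(\mathcal{C})\otimes_{\mathbb{Z}}\mathbb{A}$, exploiting the fact that $\mathrm{FPdim}$ extends to a ring homomorphism $\mathrm{Gr}(\mathcal{C})\to\mathbb{A}$ sending each simple object to the (totally positive) Frobenius--Perron eigenvalue of its fusion matrix. The central object is the \emph{regular element} $R_{\mathcal{C}}:=\sum_{X\in\mathcal{O}(\mathcal{C})}\mathrm{FPdim}(X)\,[X]$, which is characterized up to scalar by the eigenvector identity $[Z]\cdot R_{\mathcal{C}}=\mathrm{FPdim}(Z)R_{\mathcal{C}}$ for every $Z\in\mathcal{O}(\mathcal{C})$, and satisfies $\mathrm{FPdim}(R_{\mathcal{C}})=\mathrm{FPdim}(\mathcal{C})$; the analogous element $R_{\mathcal{D}}$ lives in the based subring $\mathrm{Gr}(\mathcal{D})\otimes\mathbb{A}$. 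First I would record that $\mathrm{FPdim}(\mathcal{C})$ and $\mathrm{FPdim}(\mathcal{D})$ are \emph{individually} algebraic integers, since each is the Frobenius--Perron eigenvalue of the nonnegative integer matrix of left multiplication by $\sum_{X}[X][X^{\ast}]$ (respectively the analogous sum over $\mathcal{O}(\mathcal{D})$), whose image under $\mathrm{FPdim}$ is $\sum_{X}\mathrm{FPdim}(X)^{2}=\mathrm{FPdim}(\mathcal{C})$; the Frobenius--Perron eigenvalue of an integer matrix is a root of its monic integral characteristic polynomial. Thus the real content of the statement is a \emph{divisibility} assertion in $\mathbb{A}$, and since $\mathbb{A}$ is integrally closed in $\overline{\mathbb{Q}}$ it suffices to prove that $\beta:=\mathrm{FPdim}(\mathcal{C})/\mathrm{FPdim}(\mathcal{D})$ is integral over $\mathbb{Z}$.

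The strategy for the divisibility is to realize $\beta=\mathrm{FPdim}(Z)$ for an element $Z\in\mathrm{Gr}(\mathcal{C})\otimes\mathbb{A}$ whose coefficients already lie in $\mathbb{A}$, so that $\mathrm{FPdim}(Z)\in\mathbb{A}$ automatically. To produce $Z$ I would view $\mathrm{Gr}(\mathcal{C})$ as a based left module over $\mathrm{Gr}(\mathcal{D})$ and decompose $\mathcal{O}(\mathcal{C})$ into $\mathcal{D}$-orbits, where $W$ and $W'$ share an orbit when $W'$ is a constituent of $Y\otimes W$ for some $Y\in\mathcal{O}(\mathcal{D})$. Each orbit $C$ spans a $\mathrm{Gr}(\mathcal{D})$-invariant subspace on which the action is given by irreducible nonnegative matrices, and the positive vector $R_{C}:=\sum_{W\in C}\mathrm{FPdim}(W)[W]$ is its Frobenius--Perron eigenvector, satisfying $R_{\mathcal{D}}\cdot R_{C}=\mathrm{FPdim}(\mathcal{D})R_{C}$ and $R_{\mathcal{C}}=\sum_{C}R_{C}$. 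Fixing a representative $X\in C$, the vector $R_{\mathcal{D}}\cdot[X]$ is positive, supported on $C$, and a $\mathrm{Gr}(\mathcal{D})$-eigenvector, hence a scalar multiple $\lambda_{C}R_{C}$; comparing the coefficient of $[X]$ yields $\lambda_{C}\,\mathrm{FPdim}(X)=\sum_{Y\in\mathcal{O}(\mathcal{D})}\mathrm{FPdim}(Y)N_{YX}^{X}\in\mathbb{A}$ and, after taking $\mathrm{FPdim}$, the per-orbit identity $\mathrm{FPdim}(C)/\mathrm{FPdim}(\mathcal{D})=\mathrm{FPdim}(X)^{2}/\big(\sum_{Y}\mathrm{FPdim}(Y)N_{YX}^{X}\big)$.

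It then remains to show each per-orbit ratio is an algebraic integer and to sum: summing over orbits recovers $\beta=\sum_{C}\mathrm{FPdim}(C)/\mathrm{FPdim}(\mathcal{D})$ and exhibits $Z=\sum_{C}\lambda_{C}^{-1}[X_{C}]$ as the sought element, once its coefficients are shown to lie in $\mathbb{A}$. The integrality of each transitive-orbit ratio is where I would invoke the Frobenius--Perron theory of the based $\mathrm{Gr}(\mathcal{D})$-module $C$ together with the rigidity of $\mathcal{C}$: the adjacency operators $[Y]$ and $[Y^{\ast}]$ are mutually transpose with respect to the standard bilinear form on $\mathrm{Gr}(\mathcal{C})$ (Frobenius reciprocity $N_{YW}^{W'}=N_{Y^{\ast}W'}^{W}$), and this duality is what organizes the Frobenius--Perron data of the orbit into a monic integer characteristic polynomial having the ratio as a root.

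The main obstacle is precisely this last step. Because a quotient of two algebraic integers need not be an algebraic integer (for instance $3/2$), one cannot conclude $\beta\in\mathbb{A}$ from the individual integrality of $\mathrm{FPdim}(\mathcal{C})$ and $\mathrm{FPdim}(\mathcal{D})$; the divisibility genuinely uses the categorical structure. I expect the delicate point to be proving that the transitive-module ratio $\mathrm{FPdim}(X)^{2}/\big(\sum_{Y}\mathrm{FPdim}(Y)N_{YX}^{X}\big)$ is a root of a monic polynomial over $\mathbb{Z}$, equivalently that $R_{\mathcal{C}}$ lies in the image of left multiplication by $R_{\mathcal{D}}$ over $\mathbb{A}$; the rigidity-induced transpose relation between $[Y]$ and $[Y^{\ast}]$, and hence the semisimplicity built into a fusion category, is the feature that makes this true and is the crux of the argument.
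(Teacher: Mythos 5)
The paper supplies no argument for this lemma at all---it is imported verbatim from \cite[Proposition 8.15]{ENO}---so your proposal has to stand on its own, and it does not: it contains a genuine gap, one you candidly identify yourself. Everything you actually derive is correct: the decomposition of $\mathcal{O}(\mathcal{C})$ into $\mathcal{D}$-orbits, the eigenvector identity for the orbit element $R_{C}$ (via Frobenius reciprocity), the proportionality $R_{\mathcal{D}}\cdot[X]=\lambda_{C}R_{C}$ coming from uniqueness of the Frobenius--Perron eigenvector, and the per-orbit identity expressing the orbit's contribution as $\mathrm{FPdim}(X)^{2}/\bigl(\sum_{Y\in\mathcal{O}(\mathcal{D})}\mathrm{FPdim}(Y)N_{YX}^{X}\bigr)$. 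But these manipulations only \emph{reformulate} the statement; the lemma is precisely the assertion that each such ratio (equivalently, their sum) is an algebraic integer, and that is the one step you leave as an expectation rather than a proof. Knowing that $\mathrm{FPdim}(\mathcal{C})$ and $\mathrm{FPdim}(\mathcal{D})$ are separately algebraic integers, as you note, gives nothing about the quotient.

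Moreover, the mechanism you gesture at for the missing step---that the transpose relation between the matrices of $[Y]$ and $[Y^{\ast}]$ should organize the orbit data into a monic integer polynomial with the ratio as a root---does not deliver it. The symmetric nonnegative integer matrices this relation naturally produces on an orbit, namely the matrix of multiplication by $\sum_{Y}[Y][Y^{\ast}]$ and the Gram-type matrix with entries $\sum_{W}N_{YX}^{W}N_{Y'X}^{W}$, have Frobenius--Perron eigenvalues $\mathrm{FPdim}(\mathcal{D})$ and $\sum_{Y}\mathrm{FPdim}(Y)N_{YX}^{X}$ respectively; that is, they certify integrality of the \emph{denominator} of your ratio (which is obvious anyway, being a $\mathbb{Z}_{\geq0}$-combination of algebraic integers), not of the ratio itself. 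Note that this denominator is the Frobenius--Perron dimension of the internal endomorphism algebra of $X$ in $\mathcal{D}$, so the missing claim is exactly that this dimension divides $\mathrm{FPdim}(X)^{2}$---a statement of the same depth as the lemma. In \cite{ENO} this is supplied by the theory of module categories over $\mathcal{D}$ (internal Homs, dual categories, and the invariance of Frobenius--Perron dimension under duality), which is where rigidity and semisimplicity genuinely enter; no amount of based-ring manipulation inside $\mathrm{Gr}(\mathcal{C})$ of the kind you perform appears sufficient to produce an integer matrix having the ratio as an eigenvalue.
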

A fusion category $\mathcal{C}$ is \emph{pseudounitary} if $\mathrm{FPdim}(\mathcal{C})=\dim(\mathcal{C})$.  Each pseudounitary fusion category $\mathcal{C}$ has a unique pivotal structure $a$ such that $\dim_a(X)=\mathrm{FPdim}(X)$ for all $X\in\mathcal{O}(\mathcal{C})$ \cite[Proposition 8.23]{ENO}.  This pivotal structure is spherical and it will be assumed in this exposition that any pseudounitary fusion category is equipped with this distinct pivotal structure.  Take note that pseudounitary modular tensor categories $\mathcal{C}$ enjoy the same divisibility properties for Frobenius-Perron dimensions as in Lemma \ref{modular}: $\mathrm{FPdim}(\mathcal{C})/\mathrm{FPdim}(X)^2$ is an algebraic integer for all $X\in\mathcal{O}(\mathcal{C})$.  Similarly, if $\mathcal{C}$ is a nontrivial fusion category, $\mathrm{FPdim}(\mathcal{C})-1$ is totally positive.  For future reference we combine this with the maximality of $\mathrm{FPdim}(\mathcal{C})$ in its Galois orbit.

\begin{lemma}\label{totally}
Let $\mathcal{C}$ be a fusion category.  If $\tau\in\mathrm{Gal}(\overline{\mathbb{Q}}/\mathbb{Q})$, then $\mathrm{FPdim}(\mathcal{C})\geq\tau(\mathrm{FPdim}(\mathcal{C}))\geq1$.
\end{lemma}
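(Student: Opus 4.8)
The plan is to combine two ingredients: the total positivity of $\mathrm{FPdim}(\mathcal{C})-1$ recorded above, which forces every Galois conjugate $\tau(\mathrm{FPdim}(\mathcal{C}))$ to be a real number exceeding $1$, with a Perron--Frobenius argument showing that $\mathrm{FPdim}(\mathcal{C})$ is the largest of its conjugates in absolute value. If $\mathcal{C}$ is trivial then $\mathrm{FPdim}(\mathcal{C})=1$ and the claim is immediate, so I may assume $\mathcal{C}$ is nontrivial.

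First I would record that the vector $\mathbf{d}:=(\mathrm{FPdim}(X))_{X\in\mathcal{O}(\mathcal{C})}$ of strictly positive Frobenius--Perron dimensions is a common eigenvector of the fusion matrices: since $\mathrm{FPdim}$ is a character of the Grothendieck ring, $\mathrm{FPdim}(X_i)\mathrm{FPdim}(X_j)=\sum_k N_{ij}^k\mathrm{FPdim}(X_k)$, which reads $M_i\mathbf{d}=\mathrm{FPdim}(X_i)\mathbf{d}$ for each $i$. The key step is to exhibit $\mathrm{FPdim}(\mathcal{C})$ itself as the Perron--Frobenius eigenvalue of a single nonnegative integer matrix. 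To this end set $A:=\sum_i M_iM_i^{T}$. Rigidity (Frobenius reciprocity) gives $N_{ij}^k=N_{i^\ast k}^{j}$, i.e. $M_i^{T}=M_{i^\ast}$, so $A=\sum_i M_iM_{i^\ast}$ has nonnegative integer entries, and using $\mathrm{FPdim}(X_{i^\ast})=\mathrm{FPdim}(X_i)$ one computes $A\mathbf{d}=\sum_i \mathrm{FPdim}(X_{i^\ast})\mathrm{FPdim}(X_i)\,\mathbf{d}=\big(\sum_i\mathrm{FPdim}(X_i)^2\big)\mathbf{d}=\mathrm{FPdim}(\mathcal{C})\,\mathbf{d}$. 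Because $\mathbf{d}$ has strictly positive entries, the standard Perron--Frobenius fact that a nonnegative matrix admitting a strictly positive eigenvector has that eigenvalue equal to its spectral radius shows $\mathrm{FPdim}(\mathcal{C})=\rho(A)$.

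The Galois step is then short. The characteristic polynomial $\det(xI-A)$ lies in $\mathbb{Z}[x]$ and $\mathrm{FPdim}(\mathcal{C})$ is one of its roots; any $\tau\in\mathrm{Gal}(\overline{\mathbb{Q}}/\mathbb{Q})$ permutes the roots of an integer polynomial, so $\tau(\mathrm{FPdim}(\mathcal{C}))$ is again an eigenvalue of $A$, whence $|\tau(\mathrm{FPdim}(\mathcal{C}))|\leq\rho(A)=\mathrm{FPdim}(\mathcal{C})$. By the total positivity of $\mathrm{FPdim}(\mathcal{C})-1$, the conjugate $\tau(\mathrm{FPdim}(\mathcal{C}))$ is a real number with $\tau(\mathrm{FPdim}(\mathcal{C}))-1>0$; combining these yields $\mathrm{FPdim}(\mathcal{C})\geq|\tau(\mathrm{FPdim}(\mathcal{C}))|=\tau(\mathrm{FPdim}(\mathcal{C}))\geq 1$, as required.

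I expect the main obstacle to be pinpointing the correct integer matrix whose Perron--Frobenius eigenvalue is $\mathrm{FPdim}(\mathcal{C})=\sum_i\mathrm{FPdim}(X_i)^2$, rather than an individual $\mathrm{FPdim}(X_i)$; the identity $M_i^{T}=M_{i^\ast}$ supplied by rigidity is exactly what makes $A=\sum_iM_iM_i^{T}$ simultaneously integral and have $\mathbf{d}$ as a positive eigenvector for the eigenvalue $\mathrm{FPdim}(\mathcal{C})$. Everything else is a routine consequence of Perron--Frobenius theory and the Galois-invariance of integer characteristic polynomials.
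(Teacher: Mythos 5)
Your proposal is correct, and it follows the decomposition the paper itself intends: Lemma \ref{totally} is stated there with no separate proof, as the combination of the two facts recorded in the sentences immediately preceding it, namely that $\mathrm{FPdim}(\mathcal{C})-1$ is totally positive for nontrivial $\mathcal{C}$ and that $\mathrm{FPdim}(\mathcal{C})$ is maximal in its Galois orbit. You cite the first fact exactly as the paper does, but where the paper leaves the maximality to citation, you supply a complete proof of it, and your argument checks out in every detail: Frobenius reciprocity $N_{ij}^k=N_{i^\ast k}^j$ does give $M_i^T=M_{i^\ast}$, so $A=\sum_i M_iM_i^T$ is a nonnegative integer matrix (it is the matrix of multiplication by $\sum_i X_i\otimes X_i^\ast$ in the Grothendieck ring); the strictly positive vector $\mathbf{d}$ is an eigenvector of $A$ with eigenvalue $\sum_i\mathrm{FPdim}(X_i)\mathrm{FPdim}(X_{i^\ast})=\mathrm{FPdim}(\mathcal{C})$; a nonnegative matrix with a strictly positive eigenvector realizes its spectral radius at that eigenvector; and any $\tau\in\mathrm{Gal}(\overline{\mathbb{Q}}/\mathbb{Q})$ sends $\mathrm{FPdim}(\mathcal{C})$ to another root of $\det(xI-A)\in\mathbb{Z}[x]$, hence to an eigenvalue of $A$ of absolute value at most $\rho(A)=\mathrm{FPdim}(\mathcal{C})$. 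This is in fact the standard argument behind the maximality statement in the literature, and as a byproduct it re-derives that $\mathrm{FPdim}(\mathcal{C})$ is an algebraic integer. The one ingredient you take on faith, total positivity, is precisely what the paper also takes as given; if you wanted the lemma fully self-contained you would additionally prove that each conjugate exceeds $1$, e.g.\ by noting that $f:=\tau\circ\mathrm{FPdim}$ is a character of the Grothendieck ring satisfying $f(X^\ast)=\overline{f(X)}$, whence $\tau(\mathrm{FPdim}(\mathcal{C}))=\sum_i|f(X_i)|^2\geq|f(\mathbbm{1})|^2=1$, but your division of labor matches the paper's.
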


\begin{definition}\label{deffour}
A fusion category $\mathcal{C}$ is \emph{weakly integral} if $\mathrm{FPdim}(\mathcal{C})\in\mathcal{O}_\mathbb{Q}=\mathbb{Z}$ and \emph{weakly quadratic} if $\mathrm{FPdim}(\mathcal{C})\in\mathcal{O}_\mathbb{K}$ where $\mathbb{K}=\mathbb{Q}(\sqrt{N})$ for some positive square-free integer $N$.  The category $\mathcal{C}$ is \emph{strictly quadratic} if it is weakly quadratic but \emph{not} weakly integral.
\end{definition}
\begin{note}
It is important that the concepts of weakly integral and weakly quadratic be defined in terms of Frobenius-Perron dimension and not global dimension (cf. \!\cite[Remark 8.26]{ENO}).  To see why, take any fusion category $\mathcal{C}$ such that $\dim(\mathcal{C})\not\in\mathbb{Z}$ and let $\{\mathcal{C}_i\}_{i\in I}$ be the collection of all Galois conjugates of $\mathcal{C}$ (obtained by applying a Galois automorphism to all structural constants of $\mathcal{C}$).  The global dimensions of $\mathcal{C}_i$ are all the Galois conjugates of $\dim(\mathcal{C})$, hence $\dim\left(\boxtimes_{i\in I}\mathcal{C}_i\right)\in\mathbb{Z}$.  Therefore if weakly integral or weakly quadratic were defined in terms of global dimension, these concepts would not have the desirable trait of being preserved under fusion subcategories, for example.
\end{note}
The following lemma is the primary motivation for the study of $d$-numbers.  It is Theorem 1.8, Corollary 1.3, and Corollary 1.4 of \cite{codegrees} combined so they may be referred to easily in Section \ref{sec:app}.
\begin{lemma}\label{dfp} The global dimension and Frobenius-Perron dimension of a fusion category are $d$-numbers.  If $\mathcal{C}$ is a braided fusion category, then $|X|^2$ is a $d$-number for all $X\in\mathcal{O}(\mathcal{C})$, and if $\mathcal{C}$ has a pivotal structure $a$, then $\dim_a(X)$ is a $d$-number for all $X\in\mathcal{O}(\mathcal{C})$ as well.
\end{lemma}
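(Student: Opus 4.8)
Since each of the four quantities is an algebraic integer (an eigenvalue of an integer fusion matrix, or a sum of products of such), the plan is to verify criterion (ii) of Lemma \ref{def} for each: an algebraic integer $\alpha$ is a $d$-number as soon as $\alpha/\tau(\alpha)\in\mathbb{A}$ for every $\tau\in\mathrm{Gal}(\overline{\mathbb{Q}}/\mathbb{Q})$. Only this one-sided integrality needs to be checked, because applying the hypothesis to $\tau^{-1}$ and then pushing forward by $\tau$ (which preserves $\mathbb{A}$) converts $\alpha/\tau^{-1}(\alpha)\in\mathbb{A}$ into $\tau(\alpha)/\alpha\in\mathbb{A}$; hence $\alpha/\tau(\alpha)\in\mathbb{A}^\times$ and the stronger criterion (iii) holds automatically. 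This is precisely the equivalence (ii)$\Leftrightarrow$(iii), so throughout I would aim only at the divisibilities $\alpha/\tau(\alpha)\in\mathbb{A}$.

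The common mechanism is that all four quantities are read off from characters of the Grothendieck ring $K_0(\mathcal{C})$, whose structure constants $N_{ij}^k$ are rational integers. For a pivotal structure $a$ the map $\dim_a\colon K_0(\mathcal{C})\to\mathbb{C}$ is a ring homomorphism, so the vector $(\dim_a(X_k))_k$ is an eigenvector of each integer fusion matrix $M_i$ with eigenvalue $\dim_a(X_i)$; consequently every Galois conjugate $\tau(\dim_a(X_i))$ is again an eigenvalue of $M_i$, namely the value at $X_i$ of the conjugate character $\tau\circ\dim_a$. The integrality of $\{N_{ij}^k\}$ likewise shows that $\tau$ permutes the characters of $K_0(\mathcal{C})$, hence permutes the associated formal codegrees $f_\chi=\sum_i\chi(X_i)\chi(X_i^\ast)$. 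As $\dim(\mathcal{C})=\sum_i|X_i|^2$ and $\mathrm{FPdim}(\mathcal{C})=\sum_i\mathrm{FPdim}(X_i)^2$ are the formal codegrees of the dimension character and of the Frobenius-Perron character respectively, the full Galois orbit of each of these numbers consists of formal codegrees of the same ring.

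It then remains to establish $\alpha/\tau(\alpha)\in\mathbb{A}$, i.e. that two formal codegrees (resp. two eigenvalues $\chi(X_i)$) lying in one Galois orbit divide each other. The modular case is transparent and serves as the prototype: in a modular tensor category the formal codegrees are exactly the numbers $\dim(\mathcal{C})/\dim_a(X)^2$, so by Galois-stability of this set $\tau(\dim(\mathcal{C}))=\dim(\mathcal{C})/\dim_a(X_0)^2$ for some simple $X_0$, whence $\dim(\mathcal{C})/\tau(\dim(\mathcal{C}))=\dim_a(X_0)^2=|X_0|^2\in\mathbb{A}$, and Lemma \ref{modular} supplies the integrality needed to run the same argument for the individual $\dim_a(X)$. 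For a general fusion category I would deduce the statement for $\dim(\mathcal{C})$ by passing to the Drinfeld center $\mathcal{Z}(\mathcal{C})$, which is always modular, and the recorded pseudounitary analogue of Lemma \ref{modular} together with Lemma \ref{totally} handles $\mathrm{FPdim}(\mathcal{C})$; the braided statement for $|X|^2=\dim_a(X)\dim_a(X^\ast)$ follows from the modular data attached to the symmetric center of $\mathcal{C}$.

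The main obstacle is exactly this divisibility step. The permutation action of $\mathrm{Gal}(\overline{\mathbb{Q}}/\mathbb{Q})$ on characters and codegrees and the reduction through Lemma \ref{def}(ii) are formal, but proving that Galois-conjugate codegrees---and, for $\dim_a(X)$, Galois-conjugate eigenvalues of a single fusion matrix---genuinely divide one another \emph{without} a modular structure in hand is where the real content lies (this is the work carried out in \cite{codegrees}). I expect the individual-object claim for $\dim_a(X)$ to be the most delicate, since there one cannot simply read the divisibility off a global dimension and must instead exploit the multiplicativity and duality-compatibility of $\dim_a$ to control the eigenvalues of $M_i$ directly.
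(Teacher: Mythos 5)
Your overall framework---formal codegrees, the Galois permutation of characters of the Grothendieck ring, and reduction to the divisibility criterion of Lemma \ref{def} (ii)---is exactly the machinery of \cite{codegrees}, and since the paper presents this lemma as nothing more than Theorem 1.8, Corollary 1.3, and Corollary 1.4 of \cite{codegrees} combined, your deferral of the hard non-modular divisibility step to that reference is consistent with how the paper itself handles the first two conclusions. Your modular prototype is also correct: Galois stability of the set of formal codegrees $\{\dim(\mathcal{C})/\dim_a(X)^2\}$ gives $\dim(\mathcal{C})/\tau(\dim(\mathcal{C}))=\dim_a(X_0)^2\in\mathbb{A}$.

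However, two concrete gaps remain. First, the only conclusion for which the paper supplies an actual argument---that $\dim_a(X)$ is a $d$-number when the braided category $\mathcal{C}$ carries a pivotal structure---is precisely the one you leave open: you propose to ``control the eigenvalues of $M_i$ directly,'' call this the most delicate point, and give no argument; but none of that work is needed. The paper's proof is one line: by the braided part, $|X|^2$ is a $d$-number; by pivotality $|X|^2=|\dim_a(X)|^2=\dim_a(X)\overline{\dim_a(X)}$; and \cite[Corollary 2.9]{codegrees} states that an algebraic integer whose product with its complex conjugate is a $d$-number is itself a $d$-number. You even write the identity $|X|^2=\dim_a(X)\dim_a(X^\ast)$ but never exploit it to deduce the third conclusion from the second. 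Second, your claim that the braided $|X|^2$ statement ``follows from the modular data attached to the symmetric center of $\mathcal{C}$'' cannot be repaired: the symmetric center $\mathcal{C}'$ is symmetric (Tannakian or super-Tannakian by Deligne's theorem), the polar opposite of modular, and carries no modular data. Relatedly, your reduction of the general case through $\mathcal{Z}(\mathcal{C})$ as ``always modular'' silently assumes a spherical structure; as the paper itself notes, the existence of spherical structures on arbitrary fusion categories is a fundamental open question, which is exactly why \cite{codegrees} argues through formal codegrees rather than through modularity.
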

\begin{proof}
The third conclusion follows from \cite[Corollary 2.9]{codegrees} as $|X|^2=|\dim_a(X)|^2$ for all simple $X$ in a pivotal fusion category.
\end{proof}
This result cannot be passed to dimensions of simple objects in generality.  There exists a fusion category $\mathcal{H}_1$, categorically Morita equivalent to possibly the most-famed fusion category, constructed as the even sectors of the Haagerup subfactor \cite{asaeda1999exotic} (see also Section \ref{fam}).   The category $\mathcal{H}_1$ has a simple object of Frobenius-Perron dimension $(1/2)(1+\sqrt{13})$ \cite[Section 2.20]{pinhasnoah} which is not a $d$-number.  But $\mathcal{H}_1$ is pseudounitary, hence its Drinfeld center is a pseudounitary modular tensor category and thus $\mathrm{FPdim}(X)$ is a $d$-number for all $X\in\mathcal{O}(\mathcal{Z}(\mathcal{H}_1))$.
\end{subsection}
\end{section}


\begin{section}{Classification of quadratic $d$-numbers}\label{secclass}

The sets $\mathfrak{D}_N$ were computed for square-free $N\in\mathbb{Z}_{<0}$ in Example \ref{excomplex} which were attainable because the unit groups $\mathcal{O}_N^\times$ are finite.  When $N>0$, the unit groups are infinite and more machinery is required.  As we have mentioned, integer multiples of units are always $d$-numbers so we will denote the sets of $d$-numbers $\mathfrak{D}_N$ in the following abbreviated manner.
\begin{definition}
If $N\in\mathbb{Z}_{\geq2}$ is square-free, we denote
\begin{equation}
\mathfrak{D}_N=:\langle \beta_0,\beta_1,\ldots,\beta_k\rangle
\end{equation}
for some $\beta_0,\beta_1,\ldots,\beta_k\in\mathfrak{D}_N$, if for all $\alpha\in\mathfrak{D}_N$, there exist $\ell_\alpha,m_\alpha,n_{\alpha,0},n_{\alpha,1},\ldots,n_{\alpha,k}\in\mathbb{Z}$ such that
\begin{equation}
\alpha=\ell_\alpha\epsilon_N^{m_\alpha}\beta_0^{n_{\alpha,0}}\beta_1^{n_{\alpha,1}}\cdots\beta_k^{n_{\alpha,k}}.
\end{equation}
\end{definition}
In particular, Theorem \ref{genthm} describes the monoidal generators $\beta_0,\beta_1,\ldots,\beta_k$ in terms of the fundamental unit $\epsilon_N\in\mathcal{O}_N^\times$ and the reducibility of an associated biquadratic polynomial.

\begin{subsection}{Main theorem and proof}
We will need the following technical lemma and its notation to state our main result.
\begin{lemma}\label{above}
Let $N\in\mathbb{Z}_{\geq2}$ be square-free.  There exist square-free $\kappa_1\neq\kappa_2\in\mathbb{Z}_{\geq2}$ such that the biquadratic $x^4-\kappa_it_Nx^2+\|\epsilon_N\|\kappa_i^2$ is reducible for $i=1,2$, if and only if $\|\epsilon_N\|=1$ in which case $\kappa_1,\kappa_2$ are the unique square-free integers such that $\kappa_1(t_N+2)$ and $\kappa_2(t_N-2)$ are perfect squares.
\end{lemma}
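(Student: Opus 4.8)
The plan is to reduce reducibility of the biquadratic to an elementary statement about square-free parts, and then to isolate the one genuinely arithmetic input, namely that $\sqrt{\epsilon_N}$ cannot be a unit. Write $p_\kappa(x)=x^4-\kappa t_Nx^2+\|\epsilon_N\|\kappa^2$ for square-free $\kappa\geq2$. First I would substitute $y=x^2$ and study $q_\kappa(y)=y^2-\kappa t_Ny+\|\epsilon_N\|\kappa^2$. Reading off the minimal polynomial $x^2-t_Nx+\|\epsilon_N\|$ of $\epsilon_N$ gives the relation $t_N^2-4\|\epsilon_N\|=(\epsilon_N-\sigma(\epsilon_N))^2=Nu_N^2$, so the discriminant of $q_\kappa$ is $\kappa^2Nu_N^2=(\kappa u_N)^2N$, which is never a perfect square because $N\geq2$ is square-free. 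By Gauss's lemma it suffices to consider monic integer factorizations, and I would enumerate the ways a biquadratic factors into two monic quadratics. The non-square discriminant rules out any rational root and any factorization of the shape $(x^2+\alpha)(x^2+\beta)$, leaving only $(x^2+cx+d)(x^2-cx+d)$ with $c\neq0$; matching coefficients forces $d^2=\|\epsilon_N\|\kappa^2$ and $c^2=\kappa t_N+2d$. The first equation is solvable precisely when $\|\epsilon_N\|=1$, giving $d=\pm\kappa$, and then the second reads $c^2=\kappa(t_N\pm2)$. Thus $p_\kappa$ is reducible if and only if $\|\epsilon_N\|=1$ and one of $\kappa(t_N+2)$, $\kappa(t_N-2)$ is a perfect square. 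Since a square-free $\kappa$ makes $\kappa n$ a square exactly when $\kappa$ is the square-free part of $n$, this already proves the reverse implication, forces $\kappa_1,\kappa_2$ to be the square-free parts of $t_N+2$ and $t_N-2$, and yields their uniqueness.

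Then I would verify the two remaining assertions about $\kappa_1,\kappa_2$ when $\|\epsilon_N\|=1$. Distinctness is short: if $\kappa_1=\kappa_2=\kappa$, writing $t_N+2=\kappa a^2$ and $t_N-2=\kappa b^2$ gives $Nu_N^2=(t_N+2)(t_N-2)=(\kappa ab)^2$, forcing $N$ to be a perfect square, which is impossible.

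The crux, and the step I expect to be the main obstacle, is showing $\kappa_1,\kappa_2\geq2$, equivalently that neither $t_N+2$ nor $t_N-2$ is itself a perfect square; this is exactly where fundamentality of $\epsilon_N$ must enter. I would argue by descent on the positive real square root $\eta=\sqrt{\epsilon_N}$, which is legitimate since $\epsilon_N>1$ (from $\epsilon_N+\sigma(\epsilon_N)=t_N$ and $\epsilon_N\sigma(\epsilon_N)=1$ one gets $t_N>2$, so $\epsilon_N>1$). Using $\eta^2=\epsilon_N$ and $\eta^{-1}=\sqrt{\sigma(\epsilon_N)}$, one computes $(\eta+\eta^{-1})^2=t_N+2$ and $(\eta-\eta^{-1})^2=t_N-2$. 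If $t_N+2=A^2$ then $\eta+\eta^{-1}=A\in\mathbb{Z}$ and $\eta$ is a root of the monic integer polynomial $y^2-Ay+1$; if $t_N-2=B^2$ then $\eta-\eta^{-1}=B$ and $\eta$ is a root of $y^2-By-1$. In either case $\eta$ is an algebraic integer of degree at most $2$ with $\eta^2=\epsilon_N\in\mathbb{K}\setminus\mathbb{Q}$, so $\mathbb{Q}(\eta)=\mathbb{K}$ and hence $\eta\in\mathcal{O}_N^\times$, its norm being $\pm1$. But then $1<\eta=\sqrt{\epsilon_N}<\epsilon_N$, contradicting minimality of the fundamental unit. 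This rules out both square cases, so $\kappa_1,\kappa_2\geq2$, completing the forward implication.

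I anticipate the only delicate points in the last paragraph are confirming that $\eta$ actually lies in $\mathbb{K}$ rather than being a degree-$4$ element — handled by the explicit quadratic it satisfies together with the irrationality of $\epsilon_N$ — and keeping the two sign cases $t_N\pm2$ cleanly separated. Everything in the first two paragraphs is routine coefficient bookkeeping once the key identity $t_N^2-4\|\epsilon_N\|=Nu_N^2$ and the non-squareness of $N$ are in hand.
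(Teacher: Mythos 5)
Your proof is correct, and it takes a route that differs from the paper's in instructive ways. The paper follows the same skeleton---classify the monic quadratic factorizations of $x^4-\kappa t_Nx^2+\|\epsilon_N\|\kappa^2$---but outsources that classification to two cited corollaries of Driver, Leonard, and Williams, and then splits into cases according to whether $\kappa^2(t_N^2-4\|\epsilon_N\|)$ is a perfect square: when $\|\epsilon_N\|=1$ it must dismiss a spurious $t_N=2$ possibility, and when $\|\epsilon_N\|=-1$ it argues separately that $t_N^2+4$ is never a square. Your identity $t_N^2-4\|\epsilon_N\|=(\epsilon_N-\sigma(\epsilon_N))^2=Nu_N^2$ disposes of that dichotomy uniformly (square-freeness of $N$ makes it a non-square), and your Gauss's-lemma coefficient matching makes the whole reducibility criterion self-contained. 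Your distinctness argument is also cleaner than the paper's: if $\kappa_1=\kappa_2$ then $Nu_N^2=(t_N+2)(t_N-2)$ would be a perfect square, forcing $N$ to be one as well. Most significantly, your third paragraph proves something the paper's proof never addresses: that $\kappa_1,\kappa_2\geq2$, i.e.\ that neither $t_N+2$ nor $t_N-2$ is itself a perfect square when $\|\epsilon_N\|=1$. Since the lemma asserts $\kappa_1,\kappa_2\in\mathbb{Z}_{\geq2}$, this step is genuinely required, and your descent---if $t_N\pm2$ were a square then $\eta=\sqrt{\epsilon_N}$ would satisfy a monic integer quadratic, hence lie in $\mathcal{O}_N^\times$, contradicting $1<\eta<\epsilon_N$ and fundamentality---is exactly the right way to supply it. (One cosmetic point: $\epsilon_N>1$ is the normalization of the fundamental unit, from which $t_N>2$ follows in the norm-one case, not the reverse as your parenthetical suggests.) In short, your route buys self-containedness and actually covers the full statement, including the $\mathbb{Z}_{\geq2}$ condition; the paper's buys brevity by citation but leaves that condition unverified.
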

\begin{proof}
\par Let $k\in\mathbb{Z}_{\geq2}$ be square-free and denote $g(x,N,k):=x^4-kt_Nx^2+\|\epsilon_N\|k^2$.  The conditions upon which $g$ splits were given by Driver, Leonard, and Williams in \cite{driver}.  We include the exact statements here so the reader may easily verify the arguments to follow.
\begin{corollary}\label{corollary1}
Let $r,s\in\mathbb{Z}$ such that $r^2-4s=t^2$ for some $t\in\mathbb{Z}$.  Then $f(x)=x^4+rx+s$ is reducible in $\mathbb{Z}[x]$ and
\begin{equation}
f(x)=(x^2+(1/2)(r-t))(x^2+(1/2)(r+t)).
\end{equation}
\end{corollary}
\begin{corollary}\label{corollary2}
Let $r,s\in\mathbb{Z}$ such that $r^2-4s$ is not a perfect square.  Then $f(x)=x^4+rx+s$ is reducible in $\mathbb{Z}[x]$ if and only if there exists $c\in\mathbb{Z}$ such that $c^2=s$ and $2c-r=a^2$ for some $a\in\mathbb{Z}$, in which case
\begin{equation}
f(x)=(x^2+ax+c)(x^2-ax+c).
\end{equation}
\end{corollary}
For $g(x,N,k)$, $r=-kt_N$, $s=\|\epsilon_N\|k^2$, and cases are dependent on whether $r^2-4s=k^2(t_N^2-4\|\epsilon_N\|)$ is a perfect square, hence when $t_N^2-4\|\epsilon_N\|$ is a perfect square.  If $\|\epsilon_N\|=1$ then $r^2-4s=(t_N+2)(t_N-2)$ which is a perfect square if and only if $t_N=2$ which implies $g(x,N,k)=(x^2+k)^2$.  As $k$ was assumed positive, these infinite possible values of $k$ are spurious.  Therefore Corollary \ref{corollary2} is the applicable result when $\|\epsilon_N\|=1$.  There are two possible values of $k$ based on the fact that $k^2=(-k)^2$ and the note following Corollary \ref{corollary2} in \cite{driver} implies their uniqueness for fixed $g$.  Their defining condition is given in the statement of the lemma.  Lastly, assume $\|\epsilon_N\|=-1$ in which case $-k^2$ is never a perfect square (in $\mathbb{R}$), so Corollary \ref{corollary1} is the applicable result.  But the hypotheses require $t_N^2+4$ to be a perfect square, which never occurs when $t_N>0$.  Indeed, if $\ell\in\mathbb{Z}_{\geq1}$, the difference of adjacent squares is $(\ell+1)^2-\ell^2=\ell(\ell+2)$ which is evidently greater than 4 when $\ell\geq2$, and 3 when $\ell=1$.  This also implies $\kappa_1\neq\kappa_2$ in the case $\|\epsilon_N\|=1$ or else the difference of squares $k(t_N+2)-k(t_N-2)=4$.
\end{proof}
\begin{note}For convenience, Figure \ref{figkappa} lists $\kappa_1,\kappa_2$ for small square-free $N\in\mathbb{Z}_{\geq2}$.\end{note}
\begin{theorem}\label{genthm}
Let $N\in\mathbb{Z}_{\geq2}$ be square-free and if $\|\epsilon_N\|=1$, let $\kappa_1,\kappa_2$ be as in Lemma \ref{above}. Then
\begin{equation}
\mathfrak{D}_N=\left\{\begin{array}{lcl} \langle\sqrt{N}\rangle & : & \|\epsilon_N\|=-1 \\
\langle\sqrt{\kappa_1\epsilon_N},\sqrt{\kappa_2\epsilon_N}\rangle & : & \|\epsilon_N\|=1\textnormal{ and }\kappa_1\kappa_2=N \\
\langle\sqrt{N},\sqrt{\kappa_1\epsilon_N}\rangle & : &  \|\epsilon_N\|=1\textnormal{ and }N\kappa_1=\kappa_2 \\
\langle\sqrt{N},\sqrt{\kappa_2\epsilon_N}\rangle & : &  \|\epsilon_N\|=1\textnormal{ and }N\kappa_2=\kappa_1 \\
\langle\sqrt{N},\sqrt{\kappa_1\epsilon_N},\sqrt{\kappa_2\epsilon_N}\rangle & : & \textnormal{else},
\end{array}\right. \label{twentyfour}
\end{equation}
and these generating sets are minimal.
\end{theorem}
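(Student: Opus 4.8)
The plan is to stratify $\mathfrak{D}_N$ by the order of Definition \ref{deforder}. By Lemma \ref{lemmagal} every $\alpha\in\mathfrak{D}_N$ has $\mathrm{ord}(\alpha)\in\{1,2\}$, and the order-one elements are exactly the integer multiples of units $\ell\epsilon_N^m$ ($\ell,m\in\mathbb{Z}$). These are precisely the elements absorbed into the leading factor $\ell_\alpha\epsilon_N^{m_\alpha}$ of a generating expression, so the whole content of the theorem is the classification of the order-two elements. I would first record that multiplication by a unit preserves $\mathfrak{D}_N$ (immediate from Lemma \ref{def}(iv)): if $\alpha^m=nu$ then $(\alpha w)^m\in\mathbb{Z}\cdot\mathcal{O}_N^\times$ for $w\in\mathcal{O}_N^\times$.

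Next I would normalize. If $\mathrm{ord}(\alpha)=2$ then $\alpha^2=n(\pm\epsilon_N^m)$ with $n\in\mathbb{Z}$; dividing $\alpha$ by $\epsilon_N^{\lfloor m/2\rfloor}$ and folding the sign into $n$ reduces to $\alpha^2\in\{n,n\epsilon_N\}$, and since the standard embedding of $\mathbb{Q}(\sqrt{N})$ is real with $\epsilon_N>0$, the square forces $n>0$. When $\alpha^2=n$ with $n$ not a perfect square (otherwise $\mathrm{ord}(\alpha)=1$) we get $n=s^2N$ and $\alpha\in\mathbb{Z}\,\epsilon_N^m\sqrt{N}$. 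When $\alpha^2=n\epsilon_N$, writing $n=s^2\kappa$ with $\kappa\geq1$ square-free gives $\alpha=s\epsilon_N^j\sqrt{\kappa\epsilon_N}$, and the decisive point is that $\sqrt{\kappa\epsilon_N}\in\mathcal{O}_N$ if and only if the biquadratic $x^4-\kappa t_Nx^2+\|\epsilon_N\|\kappa^2$ is reducible: as $\mathbb{Q}(\sqrt{N})=\mathbb{Q}\big((\sqrt{\kappa\epsilon_N})^2\big)$ is the only quadratic field that could contain $\sqrt{\kappa\epsilon_N}$, reducibility forces degree two and hence integrality. The value $\kappa=1$ is excluded because $\epsilon_N$, being fundamental, is not a square, so Lemma \ref{above} governs everything: if $\|\epsilon_N\|=-1$ no admissible $\kappa$ exists (conceptually, $\sigma(\epsilon_N)=-1/\epsilon_N<0$ would make $\sigma(\alpha)^2<0$), giving $\mathfrak{D}_N=\langle\sqrt{N}\rangle$, whose minimality is clear since $\mathrm{ord}(\sqrt{N})=2$; if $\|\epsilon_N\|=1$ the admissible $\kappa$ are exactly $\kappa_1,\kappa_2$, so $\mathfrak{D}_N=\langle\sqrt{N},\sqrt{\kappa_1\epsilon_N},\sqrt{\kappa_2\epsilon_N}\rangle$.

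It remains to prune these three generators in the norm-one case, and this is where I expect the real work to lie. Setting $\beta_0=\sqrt{N}$, $\beta_1=\sqrt{\kappa_1\epsilon_N}$, $\beta_2=\sqrt{\kappa_2\epsilon_N}$, the identity $(t_N+2)(t_N-2)=t_N^2-4=u_N^2N$ (valid since $\|\epsilon_N\|=1$) combined with $\kappa_1(t_N+2)$ and $\kappa_2(t_N-2)$ being perfect squares shows $\kappa_1\kappa_2=d_A^2N$ where $d_A:=\gcd(\kappa_1,\kappa_2)$. Since $N,\kappa_1,\kappa_2$ are square-free, each prime divides exactly two of them, yielding pairwise-coprime square-free $d_A,d_B:=\gcd(N,\kappa_2),d_C:=\gcd(N,\kappa_1)$ with $N=d_Bd_C$, $\kappa_1=d_Ad_C$, $\kappa_2=d_Ad_B$. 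A direct computation then gives $\beta_1\beta_2=d_A\epsilon_N\beta_0$, $\beta_0\beta_1=d_C\beta_2$, and $\beta_0\beta_2=d_B\beta_1$, so in $\mathfrak{G}_{\mathbb{Q}(\sqrt{N})}$ the classes $b_0,b_1,b_2$ satisfy the single relation $b_0+b_1+b_2=0$ and span a copy of $\mathbb{F}_2^2$.

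The main obstacle is to separate this rational dependence in $\mathfrak{G}_{\mathbb{Q}(\sqrt{N})}$ from genuine \emph{integral} redundancy. From $\beta_1\beta_2=d_A\epsilon_N\beta_0$ one recovers $\beta_0$ from $\beta_1,\beta_2$ with integer/unit coefficients exactly when $d_A=1$, and symmetrically $\beta_2$ (resp. $\beta_1$) is integrally redundant exactly when $d_C=1$ (resp. $d_B=1$). Because $N,\kappa_1,\kappa_2\geq2$, at most one of $d_A,d_B,d_C$ can equal $1$, and the four mutually exclusive possibilities $d_A=1$, $d_C=1$, $d_B=1$, and ``none'' translate precisely into the integer identities $\kappa_1\kappa_2=N$, $N\kappa_1=\kappa_2$, $N\kappa_2=\kappa_1$, and ``else'' of the statement, producing the listed generating sets. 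Minimality in every case follows from the $\mathbb{F}_2^2$ structure: any expression of a putatively redundant $\beta_i$ through the other two is forced to be the unique relation above, whose coefficient $1/d_A$, $1/d_B$, or $1/d_C$ fails to be integral exactly when the corresponding gcd exceeds $1$, so no proper subset of the asserted generators suffices. The delicate part throughout is the gcd bookkeeping that converts the single $\mathbb{F}_2$-relation into the three distinct integer identities and certifies non-integrality of the dropped coefficient.
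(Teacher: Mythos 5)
Your argument is correct, and its overall skeleton matches the paper's: stratify $\mathfrak{D}_N$ by the order of Definition \ref{deforder} (via Lemma \ref{lemmagal}), normalize by powers of $\epsilon_N$ so that order-two elements reduce to $\ell\epsilon_N^m\sqrt{N}$ or $s\epsilon_N^j\sqrt{\kappa\epsilon_N}$ with $\kappa$ square-free, and characterize $\sqrt{\kappa\epsilon_N}\in\mathcal{O}_N$ by reducibility of the biquadratic of Lemma \ref{above}, which pins the admissible $\kappa$ to $\{\kappa_1,\kappa_2\}$ when $\|\epsilon_N\|=1$ and to nothing when $\|\epsilon_N\|=-1$. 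Where you genuinely diverge is the dependency analysis, which the paper isolates as Lemma \ref{xyz} and proves by Galois theory: $\mathbb{Q}(\sqrt{\epsilon_N})$ is a quartic field with group $\mathbb{Z}/2\mathbb{Z}\times\mathbb{Z}/2\mathbb{Z}$ whose three quadratic subfields are $\mathbb{Q}(\sqrt{\kappa_1})$, $\mathbb{Q}(\sqrt{\kappa_2})$, and $\mathbb{Q}(\sqrt{\kappa_1\kappa_2})=\mathbb{Q}(\sqrt{N})$, whence $N,\kappa_1,\kappa_2$ are distinct and $\kappa_1\kappa_2=\gcd(\kappa_1,\kappa_2)^2N$ together with its two analogues. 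You reach the same identities arithmetically: $t_N^2-4=u_N^2N$ and the perfect-square conditions defining $\kappa_1,\kappa_2$ force $\kappa_1\kappa_2N$ to be a perfect square, and prime-by-prime bookkeeping yields $N=d_Bd_C$, $\kappa_1=d_Ad_C$, $\kappa_2=d_Ad_B$ with $d_A,d_B,d_C$ pairwise coprime. Your route is more elementary (no quartic field is invoked) and renders the trichotomy $\kappa_1\kappa_2=N$, $N\kappa_1=\kappa_2$, $N\kappa_2=\kappa_1$ and the fact that at most one of $d_A,d_B,d_C$ equals $1$ completely transparent; the paper's field-theoretic route buys the structural explanation underlying Note \ref{ostriknote} (the $H^1(G,U)$ interpretation). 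Your remark that $\|\epsilon_N\|=-1$ admits no $\kappa$ because $\sigma(\kappa\epsilon_N)<0$ cannot be a square in a real field is also a cleaner conceptual substitute for the difference-of-adjacent-squares computation inside the proof of Lemma \ref{above}.

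Two details you should still write out. First, the claim that $b_0,b_1,b_2$ span a copy of $\mathbb{F}_2^2$ in $\mathfrak{G}_{\mathbb{Q}(\sqrt{N})}$ — on which your minimality argument rests — needs the verifications $b_i\neq0$ and $b_i\neq b_j$; these are easy, since $\beta_i\in\mathbb{Q}\cdot\mathcal{O}_N^\times$ would force $q^2\in\{N,\kappa_1,\kappa_2\}$ for some $q\in\mathbb{Q}$ or $\epsilon_N=\pm u^2$ for a unit $u$, impossible for square-free integers at least $2$ and a fundamental unit. Second, like the paper's Lemma \ref{xyz}, your minimality argument implicitly reads generation with \emph{nonnegative} exponents: under the literal definition allowing $n_{\alpha,i}\in\mathbb{Z}$ one always has $\sqrt{N}=d_AN\epsilon_N\bigl(\sqrt{\kappa_1\epsilon_N}\bigr)^{-1}\bigl(\sqrt{\kappa_2\epsilon_N}\bigr)^{-1}$, so the three-generator set of the final case would never be minimal; minimality must be understood monoidally, a caveat your proof shares with the paper's rather than introduces.
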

\begin{proof}
If $\alpha\in\mathfrak{D}_N$, Lemma \ref{lemma} (iii) implies $\alpha=\ell\epsilon_N^m$ (in which case there is nothing to show) or $\alpha=\sqrt{\ell\epsilon_N^m}$ for some $\ell,m\in\mathbb{Z}$.  If $m=2k$ for $k\in\mathbb{Z}$, then $\sqrt{\epsilon_N^m}\in\mathbb{Q}(\sqrt{N})$ and thus $\sqrt{\ell}\in\mathbb{Q}(\sqrt{N})$ as well.  Hence $\sqrt{\ell\epsilon_N^n}=\ell'\epsilon_N^k\sqrt{N}$ for some $\ell'\in\mathbb{Z}$ and thus $\sqrt{N}$ is a generator of $\mathfrak{D}_N$.  If $m=2k+1$ for some $k\in\mathbb{Z}$ is odd, then $\sqrt{\ell\epsilon_N^n}=\epsilon_N^k\sqrt{\ell\epsilon_N}$.  It suffices to assume $\ell$ is square-free, and we have reduced the problem to understanding when there exists $\ell\in\mathbb{Z}_{\geq2}$ such that $\sqrt{\ell\epsilon}\in\mathbb{Q}(\sqrt{N})$.  There exists such an $\ell$ when conditions of Lemma \ref{above} are satisfied as $\sqrt{\ell\epsilon_N}$ is a root of $x^4-\ell t_Nx^2+\|\epsilon_N\|\ell^2$.  Moreover, if $\|\epsilon_N\|=-1$ there is a unique generator of $\mathfrak{D}_N$ but in the case $\|\epsilon_N\|=1$, there are three.  It is possible there are nontrivial relations between these generators, which we identify below.
\begin{lemma}\label{xyz}
Let $N\in\mathbb{Z}_{\geq2}$ be square-free with $\|\epsilon_N\|=1$ and $\kappa_1,\kappa_2$ be as in Lemma \ref{above}.  If $x,y$ are the least of $\{\kappa_1,\kappa_2,N\}$, and $z$ is the greatest, then $\sqrt{\kappa_1\epsilon_N}$, $\sqrt{\kappa_2\epsilon_N}$, and $\sqrt{N}$ are monoidally dependent in $\mathfrak{D}_N$ if and only if $xy=z$.
\end{lemma}
\begin{proof}
First note that $\sqrt{\kappa_1}\neq\sqrt{\kappa_2}\in\mathbb{Q}(\sqrt{\epsilon_N})$, a quartic extension of $\mathbb{Q}$, as $\mathbb{Q}(\epsilon_N)=\mathbb{Q}(\sqrt{N})$.  Therefore $\mathrm{Gal}(\mathbb{Q}(\sqrt{\epsilon_N})/\mathbb{Q})\cong\mathbb{Z}/2\mathbb{Z}\times\mathbb{Z}/2\mathbb{Z}$ and $\mathbb{Q}(\sqrt{\epsilon_N})$ has three distinct quadratic subfields $\mathbb{Q}(\sqrt{\kappa_1})$, $\mathbb{Q}(\sqrt{\kappa_2})$, and $\mathbb{Q}(\sqrt{\kappa_1\kappa_2})$.  But $\sqrt{\kappa_1\kappa_2}=\epsilon_N^{-1}\sqrt{\kappa_1\epsilon_N}\sqrt{\kappa_2\epsilon_N}\in\mathbb{Q}(\sqrt{N})$, so $\mathbb{Q}(\sqrt{\kappa_1\kappa_2})=\mathbb{Q}(\sqrt{N})$ and we conclude $N,\kappa_1,\kappa_2$ are all distinct.  As $N$ is square-free, $\kappa_1\kappa_2=\gcd(\kappa_1,\kappa_2)^2N$.  Moreover
$\gcd(\kappa_1,\kappa_2)\sqrt{N}=\epsilon_N^{-1}\sqrt{\kappa_1\epsilon_N}\sqrt{\kappa_2\epsilon_N}$.  This argument can be repeated by replacing $\kappa_1,\kappa_2$ with the other two pairs of $N,\kappa_1$ or $N,\kappa_2$.  That is $N\kappa_1=\gcd(N,\kappa_1)^2\kappa_2$ and $N\kappa_2=\gcd(N,\kappa_2)^2\kappa_1$, and $\gcd(N,\kappa_2)\sqrt{\kappa_1\epsilon_N}=\sqrt{N}\sqrt{\kappa_2\epsilon_N}$ and $\gcd(N,\kappa_1)\sqrt{\kappa_2\epsilon_N}=\sqrt{N}\sqrt{\kappa_1\epsilon_N}$.  Hence two of the generators are dependent if and only if one of $\gcd(\kappa_1,\kappa_2)$, $\gcd(N,\kappa_1)$ and $\gcd(N,\kappa_2)$ is equal to 1.  As $N,\kappa_1,\kappa_2$ are square-free, this happens if and only if one is the product of the other two.
\end{proof}
\end{proof}
\begin{note}\label{ostriknote}
As noted in Section \ref{dnum}, one can upgrade $\mathfrak{D}_N$ to have the structure of a group by allowing $d$-numbers $\alpha$ which are algebraic numbers, as opposed to limiting to algebraic integers.  Let $\mathbb{K}$ be an algebraic number field with $G:=\mathrm{Gal}(\mathbb{K}/\mathbb{Q})$ and unit group $U\subset\mathbb{K}^\times$.  Victor Ostrik has indicated the quotient of $d$-numbers in $\mathbb{K}$ (in the generalized sense) by the rational multiples of units is $H^1(G,U)$, and when $G$ is cyclic of order $n$, $|H^1(G,U)|=n$ when $\mathbb{K}$ contains a unit of norm $-1$, and $|H^1(G,U)|=2n$ otherwise.  This agrees with Lemma \ref{xyz} as any two generators taken from $\{\sqrt{\kappa_1\epsilon_N},\sqrt{\kappa_2\epsilon_N},\sqrt{N}\}$ \emph{rationally} generate the third.  Furthermore, Ostrik has conjectured the stronger statement that when $G$ is cyclic of order $n$, $H^1(G,U)\cong\mathbb{Z}/n\mathbb{Z}$ when $\mathbb{K}$ contains a unit of norm $-1$, and $H^1(G,U)\cong\mathbb{Z}/n\mathbb{Z}\oplus\mathbb{Z}/n\mathbb{Z}$ otherwise. Lemma \ref{xyz} can also be seen as a proof of this statement when $n=2$.
\end{note}
\begin{example}
All five of the possibilities in the conclusion of Theorem \ref{genthm} are realized.  We have $\|\epsilon_2\|=-1$, hence $\mathfrak{D}_2=\langle\sqrt{2}\rangle$, and $\|\epsilon_{15}\|=1$ with $\kappa_1<\kappa_2<N$ and $\kappa_1\kappa_2>N$, hence $\mathfrak{D}_{15}=\langle\sqrt{15},3+\sqrt{15},5+\sqrt{15}\rangle$.  We also have $\|\epsilon_{11}\|=1$ with $\kappa_1=22$, $\kappa_2=2$ and thus $\kappa_2<N<\kappa_1$ with $N\kappa_2=\kappa_1$.  Thus $\mathfrak{D}_{11}=\langle\sqrt{11},3+\sqrt{11}\rangle$.  Note that \begin{equation}
\sqrt{\kappa_2\epsilon_{11}}\sqrt{11}=11+3\sqrt{11}=\sqrt{\kappa_1\epsilon_{11}},
\end{equation}
making the generator $\sqrt{\kappa_1\epsilon_{11}}$ superfluous, as predicted above.  Lastly we have $\|\epsilon_6\|=1$ with $\kappa_1=3$ and $\kappa_2=2$ and thus $\kappa_2<\kappa_1<N$ with $\kappa_1\kappa_2=N$.  Thus $\mathfrak{D}_6=\langle3+\sqrt{6},2+\sqrt{6}\rangle$, with
\begin{equation}
\epsilon_N^{-1}\sqrt{\kappa_1\epsilon_N}\sqrt{\kappa_2\epsilon_N}=(5-2\sqrt{6})(3+\sqrt{6})(2+\sqrt{6})=\sqrt{6}=\sqrt{N}.
\end{equation}
\end{example}
\begin{figure}[H]
\centering
\[
\begin{array}{|c|c|c|c|c|c|c|c|c|c|c|c|c|c|c|c|c|c|c|c|}
\hline N 	& 3  & 6    & 7  & 11 	&14 & 15 	&19	&21	& 22 	& 23	& 30	& 31	& 33	& 34	& 35	& 38&	39&	42&	46	\\\hline
t_N  	  	&  4 &	10 & 16 & 	20 	&30 &  8	&340	&5		&394	&48	&22	&3040	&46	&70	&12	& 74	& 50&	26&	48670\\\hline
\kappa_1 	& 6	 &	3 &  2  & 	22  &2	&	6   	&38	&7		&11	&2		&6		&2		&3		&2		&14	&19&	13&7&	2		\\\hline
\kappa_2 	& 2 &	2 &	 14 &	2	&7	&	10 	&2		&3		&2		&46	&5		&62	&11	&17	&10	&2&	3&6&	23			\\\hline
\end{array}
\]
    \caption{Small square-free $N\in\mathbb{Z}_{\geq2}$ such that $\|\epsilon_N\|=1$, and their associated $\kappa_1$, $\kappa_2$}%
    \label{figkappa}%
\end{figure}
\begin{example}\label{ex1}
Here we list the sets $\mathfrak{D}_N$ for square-free $2\leq N\leq22$, afforded by Theorem \ref{genthm}.
\begin{align*}
&\mathfrak{D}_2=\langle\sqrt{2}\rangle&&\mathfrak{D}_{13}=\langle\sqrt{13}\rangle \\
&\mathfrak{D}_3=\langle\sqrt{3},1+\sqrt{3}\rangle&&\mathfrak{D}_{14}=\langle4+\sqrt{14},7+2\sqrt{14}\rangle \\
&\mathfrak{D}_5=\langle\sqrt{5}\rangle
&&\mathfrak{D}_{15}=\langle\sqrt{15},3+\sqrt{15},5+\sqrt{15}\rangle \\
&\mathfrak{D}_6=\langle3+\sqrt{6},2+\sqrt{6}\rangle
&&\mathfrak{D}_{17}=\langle\sqrt{17}\rangle \\
&\mathfrak{D}_7=\langle\sqrt{7},3+\sqrt{7}\rangle 
&&\mathfrak{D}_{19}=\langle\sqrt{19},13+3\sqrt{19}\rangle\\
&\mathfrak{D}_{10}=\langle\sqrt{10}\rangle 
&&\mathfrak{D}_{21}=\langle(1/2)(7+\sqrt{21}),(1/2)(3+\sqrt{21})\rangle \\
&\mathfrak{D}_{11}=\langle\sqrt{11},3+\sqrt{11}\rangle
&&\mathfrak{D}_{22}=\langle33+7\sqrt{22},14+3\sqrt{22}\rangle
\end{align*}
\end{example}
\begin{corollary}\label{corthree}
Let $N\in\mathbb{Z}_{\geq2}$ be square-free.  If $\alpha\in\mathfrak{D}_N$, there exist $\ell_\alpha,m_\alpha\in\mathbb{Z}$, and $\delta_{\alpha,0},\delta_{\alpha,1},\delta_{\alpha,2}=0,1$ such that
\begin{equation}
\alpha=\ell_\alpha\epsilon_N^{m_\alpha}(\sqrt{N})^{\delta_{\alpha,0}}(\sqrt{\kappa_1\epsilon_N})^{\delta_{\alpha,1}}(\sqrt{\kappa_2\epsilon_N})^{\delta_{\alpha,2}},
\end{equation}
and this factorization is unique if one assumes $\delta_{\alpha,i}=0,1$ and those $\delta_{\alpha,i}$ corresponding to the generators absent in (\ref{twentyfour}) are zero.
\end{corollary}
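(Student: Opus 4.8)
The plan is to read both existence and uniqueness off of Theorem \ref{genthm}, using the group $\mathfrak{G}_N$ of Section \ref{dnum} (generalized $d$-numbers modulo $\mathbb{Q}\cdot\mathcal{O}_N^\times$) as the bookkeeping device. For \emph{existence}, Theorem \ref{genthm} already writes any $\alpha\in\mathfrak{D}_N$ as $\ell\epsilon_N^{m}\prod_i\beta_i^{n_i}$ with the $\beta_i$ drawn from the generating set of the relevant case. The key observation is that every admissible generator squares into $\mathbb{Z}\cdot\langle\epsilon_N\rangle$: indeed $(\sqrt{N})^2=N$ and $(\sqrt{\kappa_j\epsilon_N})^2=\kappa_j\epsilon_N$. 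Writing each $n_i=2q_i+\delta_i$ with $\delta_i\in\{0,1\}$ and pulling out $\beta_i^{2q_i}$ as an integer times a power of $\epsilon_N$, I absorb these factors into $\ell_\alpha$ and $m_\alpha$; any generator absent from the case simply contributes $\delta_i=0$. This yields the asserted form.

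For \emph{uniqueness} I would pass to $\mathfrak{G}_N$. By Lemma \ref{xyz} (cf.\! Note \ref{ostriknote}) we have $\mathfrak{G}_N\cong\mathbb{Z}/2\mathbb{Z}$ when $\|\epsilon_N\|=-1$ and $\mathfrak{G}_N\cong\mathbb{Z}/2\mathbb{Z}\oplus\mathbb{Z}/2\mathbb{Z}$ when $\|\epsilon_N\|=1$, with the classes $[\sqrt{N}]$, $[\sqrt{\kappa_1\epsilon_N}]$, $[\sqrt{\kappa_2\epsilon_N}]$ being exactly the nontrivial elements, any one of which is the product of the other two. Since $\ell_\alpha\epsilon_N^{m_\alpha}\in\mathbb{Q}\cdot\mathcal{O}_N^\times$ maps to the identity, the image of $\alpha$ in $\mathfrak{G}_N$ equals $\prod_i[\beta_i]^{\delta_i}$, so the $\delta_i$ are constrained by the single invariant $[\alpha]\in\mathfrak{G}_N$.

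In each of the four cases where one root-generator is absent, the classes of the \emph{present} generators form a basis of $\mathfrak{G}_N$ (two independent nontrivial elements of $(\mathbb{Z}/2\mathbb{Z})^2$, or the generator of $\mathbb{Z}/2\mathbb{Z}$), because the product of any two present classes is the nontrivial class of the suppressed generator. Hence the assignment $(\delta_i)\mapsto[\alpha]$ is injective on the relevant copy of $\{0,1\}^{\#\text{present}}$, and $[\alpha]$ pins down the $\delta_i$. With those fixed, $\ell_\alpha\epsilon_N^{m_\alpha}=\alpha\prod_i\beta_i^{-\delta_i}$ is a determined element of $\mathbb{Q}(\sqrt{N})$; because no nonzero power of $\epsilon_N$ is rational (as $|\epsilon_N|>1>|\sigma(\epsilon_N)|$ forces $(\epsilon_N/\sigma(\epsilon_N))^k=1$ only when $k=0$), the exponent $m_\alpha$ and then $\ell_\alpha$ are determined, completing uniqueness in these cases.

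The step I expect to be the main obstacle is the remaining ``else'' case, where all three root-generators are present yet $\mathfrak{G}_N$ still carries only two independent generators, so the map $\{0,1\}^3\to(\mathbb{Z}/2\mathbb{Z})^2$ is two-to-one with kernel the diagonal. The precise source of the collapse is the relation
\[
(\sqrt{N})(\sqrt{\kappa_1\epsilon_N})(\sqrt{\kappa_2\epsilon_N})=\sqrt{N\kappa_1\kappa_2}\,\epsilon_N=\gcd(\kappa_1,\kappa_2)\,N\,\epsilon_N\in\mathbb{Z}\cdot\epsilon_N,
\]
obtained from $\kappa_1\kappa_2=\gcd(\kappa_1,\kappa_2)^2N$ in Lemma \ref{xyz}, which shows that $(\delta_0,\delta_1,\delta_2)$ and its complement $(1-\delta_0,1-\delta_1,1-\delta_2)$ can name the same $\alpha$. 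To promote the $\mathfrak{G}_N$-bijection argument to genuine uniqueness here one must impose a tie-break selecting one representative of each complementary pair (for instance $\delta_0+\delta_1+\delta_2\le1$); after such a normalization the injectivity argument above applies verbatim. I would therefore isolate this relation first, verify it accounts for the \emph{only} coincidence (it generates the kernel of $\{0,1\}^3\to\mathfrak{G}_N$), and then conclude uniqueness relative to the chosen normalization.
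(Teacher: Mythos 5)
Your uniqueness analysis is largely right, and your catch in the ``else'' case is genuinely correct: as literally stated, uniqueness fails there. For instance in $\mathfrak{D}_{15}$ (where $\kappa_1=6$, $\kappa_2=10$ and all three generators are present) one has $\sqrt{15}\,(3+\sqrt{15})=3\,(5+\sqrt{15})$, so $(\ell,m,\delta)=(1,0,(1,1,0))$ and $(3,0,(0,0,1))$ are two admissible factorizations of $15+3\sqrt{15}$ under the stated constraints. The paper gives no proof of Corollary \ref{corthree}, and the normalization your tie-break $\delta_{\alpha,0}+\delta_{\alpha,1}+\delta_{\alpha,2}\leq1$ supplies is exactly what falls out of the paper's implicit route through the \emph{proof} of Theorem \ref{genthm}, where every $\alpha\in\mathfrak{D}_N$ is produced directly in the form $\ell\epsilon_N^m$, $\ell\epsilon_N^m\sqrt{N}$, or $\ell\epsilon_N^m\sqrt{\kappa_i\epsilon_N}$, i.e., with at most one nonzero $\delta$.

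However, your existence argument has a genuine gap. Theorem \ref{genthm} only asserts a representation $\ell\epsilon_N^m\prod_i\beta_i^{n_i}$ with $n_i\in\mathbb{Z}$, and your step ``pulling out $\beta_i^{2q_i}$ as an integer times a power of $\epsilon_N$'' fails when $n_i<0$: in that case $\beta_i^{2q_i}$ is the \emph{reciprocal} of an integer times a unit power, and the absorbed $\ell_\alpha$ need not be an integer. Concretely, $\sqrt{15}\,\epsilon_{15}=5\epsilon_{15}(3+\sqrt{15})(5+\sqrt{15})^{-1}$ is a legitimate representation in the sense of Theorem \ref{genthm} with $\ell=5\in\mathbb{Z}$; writing $-1=2(-1)+1$ and pulling out $(5+\sqrt{15})^{-2}=(10\epsilon_{15})^{-1}$ yields $\tfrac{1}{2}(3+\sqrt{15})(5+\sqrt{15})$, with $\ell_\alpha=\tfrac{1}{2}\notin\mathbb{Z}$. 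Restoring integrality requires the collapse relation $(3+\sqrt{15})(5+\sqrt{15})=2\sqrt{15}\,\epsilon_{15}$ of Lemma \ref{xyz} --- the very relations you defer to the uniqueness discussion --- or, more cleanly, bypassing the statement of Theorem \ref{genthm} in favor of its proof: Lemma \ref{lemma} (iii) gives $\alpha^2=\ell'\epsilon_N^{m'}$ with $\ell'\in\mathbb{Z}$, and the parity of $m'$ together with Lemma \ref{above} immediately yields $\alpha=\ell\epsilon_N^k$, $\ell\epsilon_N^k\sqrt{N}$, or $\ell\epsilon_N^k\sqrt{\kappa_i\epsilon_N}$ with $\ell\in\mathbb{Z}$. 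That route delivers integral $\ell_\alpha$, at most one nonzero $\delta_{\alpha,i}$, and the normalization needed for uniqueness all at once.
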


The notation of Corollary \ref{corthree} will be used consistently and pervasively throughout the remaining sections.  The following lemma allows us to quickly determine possible divisors of quadratic $d$-numbers based on the fact that their ``integral parts'' must divide one another.

\begin{proposition}\label{lemdiv}
Let $N\in\mathbb{Z}_{\geq2}$ be square-free and let $\alpha,\beta\in\mathfrak{D}_N$.  If $\beta\mid\alpha$, then $\ell_\beta\mid\ell_\alpha$.
\end{proposition}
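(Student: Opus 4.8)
The plan is to pass to the factorization of Corollary \ref{corthree} and compare $\mathfrak{p}$-adic valuations of $\alpha$ and $\beta$ at every prime $\mathfrak{p}$ of the Dedekind domain $\mathcal{O}_N$. Writing $\alpha=\ell_\alpha\epsilon_N^{m_\alpha}g_\alpha$ with $g_\alpha:=(\sqrt{N})^{\delta_{\alpha,0}}(\sqrt{\kappa_1\epsilon_N})^{\delta_{\alpha,1}}(\sqrt{\kappa_2\epsilon_N})^{\delta_{\alpha,2}}$ (and likewise for $\beta$), the hypothesis $\beta\mid\alpha$ is equivalent to $v_\mathfrak{p}(\alpha)\geq v_\mathfrak{p}(\beta)$ for all $\mathfrak{p}$, and the desired conclusion $\ell_\beta\mid\ell_\alpha$ is equivalent to $v_p(\ell_\beta)\leq v_p(\ell_\alpha)$ for every rational prime $p$. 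Since $\epsilon_N\in\mathcal{O}_N^\times$ contributes nothing to any valuation, the whole problem reduces to understanding the ideals generated by $\ell_\alpha$ and by the three square-root generators.

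The first key step is to record the prime factorizations of $(\sqrt{N})$, $(\sqrt{\kappa_1\epsilon_N})$, and $(\sqrt{\kappa_2\epsilon_N})$. I would show each is a squarefree product of ramified primes of $\mathcal{O}_N$: one has $(\sqrt{N})^2=(N)$ and $(\sqrt{\kappa_i\epsilon_N})^2=(\kappa_i)$, and because $N,\kappa_1,\kappa_2$ are squarefree, a prime $p$ dividing the radicand occurs there to the odd exponent $1$, which forces every prime of $\mathcal{O}_N$ above $p$ to have even ramification index, hence to be ramified; halving the ideals then exhibits the generator as the squarefree product $\prod\mathfrak{r}_p$ over the ramified primes above the primes dividing its radicand. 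In particular each generator's ideal is coprime to every unramified prime and has valuation in $\{0,1\}$ at every ramified prime.

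With this in hand I split by the splitting type of $p$. For $p$ unramified (inert or split) every square-root generator has valuation $0$ at each $\mathfrak{p}\mid p$, so $v_\mathfrak{p}(\alpha)=v_\mathfrak{p}(\ell_\alpha)=v_p(\ell_\alpha)$, and $v_\mathfrak{p}(\alpha)\geq v_\mathfrak{p}(\beta)$ reads exactly $v_p(\ell_\alpha)\geq v_p(\ell_\beta)$. For $p$ ramified, with $(p)=\mathfrak{r}^2$, one has $v_\mathfrak{r}(\alpha)=2v_p(\ell_\alpha)+v_\mathfrak{r}(g_\alpha)$ where $v_\mathfrak{r}(g_\alpha)\in\{0,1\}$, so the divisibility hypothesis yields $2(v_p(\ell_\alpha)-v_p(\ell_\beta))\geq v_\mathfrak{r}(g_\beta)-v_\mathfrak{r}(g_\alpha)\geq-1$; as the left-hand side is an even integer bounded below by $-1$, it is in fact $\geq 0$. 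Thus $v_p(\ell_\alpha)\geq v_p(\ell_\beta)$ in every case, giving $\ell_\beta\mid\ell_\alpha$.

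The main obstacle is legitimizing $v_\mathfrak{r}(g_\alpha)\in\{0,1\}$, that is, ensuring the square-root part carries each ramified prime at most once. This is precisely the point of working with the reduced factorization underlying Corollary \ref{corthree}: in the ``else'' case each ramified prime divides either none or exactly two of $N,\kappa_1,\kappa_2$ (a parity consequence of $\kappa_1\kappa_2=\gcd(\kappa_1,\kappa_2)^2N$ and its companions from the proof of Lemma \ref{xyz}), so if two of the $\delta_{\alpha,i}$ were simultaneously $1$ a ramified prime could appear squared and should instead be absorbed into $\ell_\alpha$. I would therefore first normalize to the factorization in which $(g_\alpha)$ is squarefree — equivalently, in which $\ell_\alpha$ is the genuine integer part of the Galois-invariant ideal $(\alpha)=(\ell_\alpha)(g_\alpha)$ — so that $v_\mathfrak{r}(g_\alpha)\leq 1$ holds at every ramified prime, after which the parity argument above closes the proof.
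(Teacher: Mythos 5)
Your proof is correct, and it takes a genuinely different route from the paper's. The paper stays entirely inside the monoid: writing $\alpha=\beta\gamma$, it multiplies $\gamma$ by the generators $(\sqrt{N})^{r_0}(\sqrt{\kappa_1\epsilon_N})^{r_1}(\sqrt{\kappa_2\epsilon_N})^{r_2}$ needed to cancel the exponents with $\delta_{\alpha,i}-\delta_{\beta,i}=-1$, notes that the result is an algebraic integer equal to $(\ell_\alpha/\ell_\beta)\epsilon_N^{m_\alpha-m_\beta}$ times a product of generators with exponents in $\{0,1\}$, and reads off $\ell_\beta\mid\ell_\alpha$; no ideal theory appears. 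You instead work prime-by-prime in the Dedekind domain $\mathcal{O}_N$: your observation that $(\sqrt{N})$, $(\sqrt{\kappa_1\epsilon_N})$, $(\sqrt{\kappa_2\epsilon_N})$ are squarefree products of ramified primes is correct (and not recorded in the paper), the unramified case is immediate, and the parity step --- an even integer bounded below by $-1$ is nonnegative --- is exactly right. What your approach buys is rigor precisely where the paper is terse: the paper's closing ``Hence $\ell_\beta\mid\ell_\alpha$'' is legitimate only because the surviving product of generators generates a squarefree ideal, which is what your valuation analysis actually proves. Your normalization discussion is also not a detour: because $\sqrt{\kappa_1\epsilon_N}\sqrt{\kappa_2\epsilon_N}=\gcd(\kappa_1,\kappa_2)\epsilon_N\sqrt{N}$ (a relation from the proof of Lemma \ref{xyz}), the integer part $\ell_\alpha$ depends on the chosen representation, and the factorization of Corollary \ref{corthree} is unique --- hence the proposition well-posed and true --- only under the reduced convention in which $(g_\alpha)$ is squarefree, i.e.\ at most one $\delta_{\alpha,i}=1$; for instance $\beta=\gcd(\kappa_1,\kappa_2)$ divides $\alpha=\sqrt{\kappa_1\epsilon_N}\sqrt{\kappa_2\epsilon_N}$, so assigning $\ell_\alpha=1$ via the unreduced representation would falsify the statement. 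The paper's proof assumes this convention silently; you identify it and build it in, at the cost of invoking splitting and ramification theory where the paper needs only that products of algebraic integers are algebraic integers.
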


\begin{proof}
Assume there exists an algebraic integer $\gamma$ such that $\alpha=\beta\gamma$ and for each $i=0,1,2$, set $r_i:=1$ if $\delta_{\alpha,i}-\delta_{\beta,i}=-1$ and $r_i:=0$ else.  Then $\gamma(\sqrt{N})^{r_0}(\sqrt{\kappa_1\epsilon_N})^{r_1}(\sqrt{\kappa_2\epsilon_N})^{r_2}\in\mathcal{O}_N$, equal to
\begin{equation}\label{setup}
\dfrac{\ell_\alpha}{\ell_\beta}\epsilon_N^{m_\alpha-m_\beta}(\sqrt{N})^{\delta_{\alpha,0}-\delta_{\beta,0}+r_0}(\sqrt{\kappa_1\epsilon_N})^{\delta_{\alpha,1}-\delta_{\beta,1}+r_1}(\sqrt{\kappa_2\epsilon_N})^{\delta_{\alpha,2}-\delta_{\beta,2}+r_2}
\end{equation}
with $\delta_{\alpha,i}-\delta_{\beta,i}+r_i=0,1$ for $i=0,1,2$ by construction.  Hence $\ell_\beta\mid\ell_\alpha$.
\end{proof}

\begin{corollary}
Let $N\in\mathbb{Z}_{\geq2}$ be square-free and let $\alpha,\beta\in\mathfrak{D}_N$.  If $r_i$ for $i=0,1,2$ are defined as in Proposition \ref{lemdiv} and $\beta\mid\alpha$, then $(\ell_\beta N^{r_0}\kappa_1^{r_1}\kappa_2^{r_2})\mid\ell_\alpha$.
\end{corollary}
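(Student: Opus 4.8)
The plan is to recognize the cofactor $\gamma:=\alpha/\beta$ as an element of $\mathfrak{D}_N$ and to read off its integer part. Since $\beta\mid\alpha$, the quotient $\gamma$ is an algebraic integer, and as $\alpha,\beta\in\mathbb{Q}(\sqrt{N})$ we have $\gamma\in\mathcal{O}_N$. To promote this to membership in $\mathfrak{D}_N$ I would invoke Lemma \ref{def} (iii): writing $\sigma$ for the nontrivial element of $\mathrm{Gal}(\mathbb{Q}(\sqrt{N})/\mathbb{Q})$,
\begin{equation*}
\frac{\gamma}{\sigma(\gamma)}=\frac{\alpha}{\sigma(\alpha)}\cdot\frac{\sigma(\beta)}{\beta}\in\mathbb{A}^\times,
\end{equation*}
a product of units because both $\alpha$ and $\beta$ are $d$-numbers. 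Hence $\gamma\in\mathfrak{D}_N$, and Corollary \ref{corthree} furnishes a unique factorization of $\gamma$ with integer part $\ell_\gamma\in\mathbb{Z}$ and exponents $\delta_{\gamma,i}\in\{0,1\}$.

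The heart of the argument is then bookkeeping on the identity $\alpha=\beta\gamma$ expressed through these factorizations. Multiplying the standard forms of $\beta$ and $\gamma$, the exponent on each generator becomes $\delta_{\beta,i}+\delta_{\gamma,i}\in\{0,1,2\}$; an exponent of $2$ occurs exactly when $\delta_{\beta,i}=\delta_{\gamma,i}=1$, in which case the square of the generator equals $N$, $\kappa_1\epsilon_N$, or $\kappa_2\epsilon_N$, contributing the integer factor $N$, $\kappa_1$, or $\kappa_2$ (together with a unit) and resetting the reduced exponent to $0$. Setting $s_i:=1$ when $\delta_{\beta,i}=\delta_{\gamma,i}=1$ and $s_i:=0$ otherwise, the uniqueness clause of Corollary \ref{corthree} identifies the reduced exponents as $\delta_{\alpha,i}=(\delta_{\beta,i}+\delta_{\gamma,i})\bmod 2$ and yields
\begin{equation*}
\ell_\alpha=\ell_\beta\,\ell_\gamma\,N^{s_0}\kappa_1^{s_1}\kappa_2^{s_2}.
\end{equation*}

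It remains to identify $s_i$ with $r_i$. From the definition in Proposition \ref{lemdiv}, $r_i=1$ exactly when $\delta_{\alpha,i}=0$ and $\delta_{\beta,i}=1$; reducing the relation $\delta_{\alpha,i}\equiv\delta_{\beta,i}+\delta_{\gamma,i}\pmod 2$ shows that, under $\delta_{\beta,i}=1$, the condition $\delta_{\alpha,i}=0$ is equivalent to $\delta_{\gamma,i}=1$, i.e.\ to $s_i=1$. Substituting $s_i=r_i$ gives $\ell_\alpha=\ell_\beta\ell_\gamma N^{r_0}\kappa_1^{r_1}\kappa_2^{r_2}$, so $(\ell_\beta N^{r_0}\kappa_1^{r_1}\kappa_2^{r_2})\mid\ell_\alpha$ with quotient $\ell_\gamma$. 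The one step demanding genuine care is the opening claim $\gamma\in\mathfrak{D}_N$: the monoid $\mathfrak{D}_N$ is not closed under division, so it is essential that the hypothesis $\beta\mid\alpha$ supplies integrality of $\gamma$, after which the $d$-number property transfers through the quotient. An alternative that sidesteps this claim is to reuse the element $\eta:=\gamma(\sqrt{N})^{r_0}(\sqrt{\kappa_1\epsilon_N})^{r_1}(\sqrt{\kappa_2\epsilon_N})^{r_2}$ of Proposition \ref{lemdiv}, whose integer part is $\ell_\alpha/\ell_\beta$, and to rationalize each negative power of a generator when solving back for $\gamma$; each such rationalization divides the integer part by the corresponding $N$, $\kappa_1$, or $\kappa_2$, reproducing the same conclusion.
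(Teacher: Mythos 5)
Your proof is correct, and its two halves bracket the paper's own argument. The paper proves this corollary in a single sentence that continues the proof of Proposition \ref{lemdiv}: starting from the element displayed in (\ref{setup}), it rationalizes each inverted generator, $(\sqrt{N})^{-1}=\sqrt{N}/N$ and $(\sqrt{\kappa_i\epsilon_N})^{-1}=\epsilon_N^{-1}\sqrt{\kappa_i\epsilon_N}/\kappa_i$, so that the integer part of $\alpha/\beta$ is exhibited directly as $\ell_\alpha/(\ell_\beta N^{r_0}\kappa_1^{r_1}\kappa_2^{r_2})$ --- this is precisely the ``alternative'' you sketch in your closing sentences. Your main route is the multiplicative mirror image: you first certify $\gamma=\alpha/\beta\in\mathfrak{D}_N$ via Lemma \ref{def}(iii), write its standard form, multiply $\beta\gamma$ while reducing squared generators to the integer factors $N$, $\kappa_1$, $\kappa_2$, and then match against $\alpha$ using the uniqueness clause of Corollary \ref{corthree}; your identification $s_i=r_i$ from the mod-2 relation on exponents is the right bookkeeping and is carried out correctly. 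Both arguments ultimately rest on the same uniqueness statement, so the difference is one of presentation rather than substance, but your version buys two things the paper leaves implicit: the explicit observation that an algebraic-integer quotient of quadratic $d$-numbers is again a $d$-number (the paper sidesteps this by multiplying $\gamma$ by generators rather than factoring $\gamma$ itself), and the exact value of the quotient, $\ell_\alpha/(\ell_\beta N^{r_0}\kappa_1^{r_1}\kappa_2^{r_2})=\ell_\gamma$. The paper's version is more economical only because it reuses the computation already on the page from Proposition \ref{lemdiv}.
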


\begin{proof}
If $r_0=1$, then $(\sqrt{N})^{-1}=\sqrt{N}/N$ and similarly for $r_1,r_2$.
\end{proof}

\begin{lemma}\label{greaterthenzero}
Let $N\in\mathbb{Z}_{\geq2}$ be square-free.  If $\alpha=a+b\sqrt{N}\in\mathfrak{D}_N$ for $a,b\in\frac{1}{2}\mathbb{Z}_{\geq0}$, then $m_\alpha\geq0$.
\end{lemma}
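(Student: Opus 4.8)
The plan is to compare $\alpha$ with its nontrivial conjugate $\sigma(\alpha) = a - b\sqrt{N}$ and to read $m_\alpha$ off the unit $\alpha/\sigma(\alpha)$. The hypothesis $a,b \in \tfrac{1}{2}\mathbb{Z}_{\geq 0}$ is used in exactly one place: $\alpha = a + b\sqrt{N}$ is then a positive real number (necessarily $\alpha\neq0$ for $m_\alpha$ to be defined) with $\alpha \geq |a - b\sqrt{N}| = |\sigma(\alpha)|$, so $u := \alpha/\sigma(\alpha)$ satisfies $|u| \geq 1$. By Lemma \ref{def}~(iii), $u \in \mathcal{O}_N^\times$, hence $u = \pm\epsilon_N^{k}$ for a unique $k \in \mathbb{Z}$; since $\epsilon_N > 1$ and $|u| = \epsilon_N^{k} \geq 1$, we get $k \geq 0$.

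First I would evaluate $k$ through the normal form $\alpha = \ell_\alpha \epsilon_N^{m_\alpha}(\sqrt{N})^{\delta_{\alpha,0}}(\sqrt{\kappa_1\epsilon_N})^{\delta_{\alpha,1}}(\sqrt{\kappa_2\epsilon_N})^{\delta_{\alpha,2}}$ of Corollary \ref{corthree}. The rational factor $\ell_\alpha$ cancels in $\alpha/\sigma(\alpha)$, while $|\epsilon_N/\sigma(\epsilon_N)| = \epsilon_N^{2}$ and $|\sqrt{N}/\sigma(\sqrt{N})| = 1$; in the only case where the generators $\sqrt{\kappa_i\epsilon_N}$ occur, namely $\|\epsilon_N\| = 1$, one has $\sigma(\sqrt{\kappa_i\epsilon_N})^2 = \kappa_i\epsilon_N^{-1}$ and therefore $|\sqrt{\kappa_i\epsilon_N}/\sigma(\sqrt{\kappa_i\epsilon_N})| = \sqrt{\kappa_i\epsilon_N}/\sqrt{\kappa_i\epsilon_N^{-1}} = \epsilon_N$. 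Multiplying these factors gives $|u| = \epsilon_N^{\,2m_\alpha + \delta_{\alpha,1} + \delta_{\alpha,2}}$, so that $k = 2m_\alpha + \delta_{\alpha,1} + \delta_{\alpha,2} \geq 0$.

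Because each $\delta_{\alpha,i} \in \{0,1\}$, this inequality already yields the lemma in every case but one. If $\|\epsilon_N\| = -1$ then $\delta_{\alpha,1} = \delta_{\alpha,2} = 0$ and $2m_\alpha \geq 0$; if $\delta_{\alpha,1} + \delta_{\alpha,2} = 1$ then $2m_\alpha \geq -1$; in either case $m_\alpha \in \mathbb{Z}$ forces $m_\alpha \geq 0$. The one obstruction, and the step I expect to demand the most care, is $\delta_{\alpha,1} = \delta_{\alpha,2} = 1$, where the bound degrades to $m_\alpha \geq -1$. To remove it I would invoke the relation $\sqrt{\kappa_1\epsilon_N}\,\sqrt{\kappa_2\epsilon_N} = \gcd(\kappa_1,\kappa_2)\,\epsilon_N\sqrt{N}$ established in the proof of Lemma \ref{xyz}: it shows that a factor $\sqrt{\kappa_1\epsilon_N}\sqrt{\kappa_2\epsilon_N}$ equals an integer times $\epsilon_N\sqrt{N}$, so the configuration $\delta_{\alpha,1} = \delta_{\alpha,2} = 1$ is interchangeable with $\delta_{\alpha,0} = 1$, $\delta_{\alpha,1} = \delta_{\alpha,2} = 0$ at the cost of raising $m_\alpha$ by one. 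Consequently the reduced factorization of Corollary \ref{corthree} always satisfies $\delta_{\alpha,1} + \delta_{\alpha,2} \leq 1$, and the previous paragraph applies. The delicate bookkeeping is to confirm that this substitution indeed returns the unique normal form in each branch of (\ref{twentyfour}) — in particular that the case $\kappa_1\kappa_2 = N$, where $\sqrt{N}$ is not listed as a generator, is handled correctly — so that the exponent $m_\alpha$ named in Corollary \ref{corthree} is exactly the one to which $2m_\alpha + \delta_{\alpha,1} + \delta_{\alpha,2} \geq 0$ has been applied.
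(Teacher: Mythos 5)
Your reduction to the inequality $2m_\alpha+\delta_{\alpha,1}+\delta_{\alpha,2}\geq0$ is correct, and it is in substance the same argument as the paper's proof: the paper compares $\alpha^2$ with $\sigma(\alpha)^2$, which is equivalent to your comparison of $|\alpha/\sigma(\alpha)|$ with $1$, since $\alpha/\sigma(\alpha)=\alpha^2/\|\alpha\|$. The genuine gap is precisely the step you flagged as delicate: the assertion that the normal form of Corollary \ref{corthree} always has $\delta_{\alpha,1}+\delta_{\alpha,2}\leq1$ is false in the branch $\kappa_1\kappa_2=N$ of (\ref{twentyfour}). In that branch $\sqrt{N}$ is excluded from the generating set, so the uniqueness convention of Corollary \ref{corthree} forces $\delta_{\alpha,0}=0$, and your substitution $\sqrt{\kappa_1\epsilon_N}\sqrt{\kappa_2\epsilon_N}=\gcd(\kappa_1,\kappa_2)\,\epsilon_N\sqrt{N}$ exits the set of admissible normal forms instead of producing another one. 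Concretely, for $N=6$ ($\kappa_1=3$, $\kappa_2=2$, $\kappa_1\kappa_2=6$) the element $\alpha=\sqrt{6}$ satisfies the hypotheses with $a=0$, $b=1$, and its unique admissible factorization is $\sqrt{6}=\epsilon_6^{-1}(3+\sqrt{6})(2+\sqrt{6})$, so $m_\alpha=-1<0$. No bookkeeping can repair this: the lemma as stated fails at this $\alpha$, and your inequality, which gives $2(-1)+1+1=0\geq0$ here, is sharp.

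You should not count this entirely against yourself, because the paper's own proof has the same blind spot, hidden rather than flagged. It writes $\alpha^2=q\epsilon_N^{2m_\alpha}$ with $q=\ell_\alpha^2N^{\delta_{\alpha,0}}\kappa_1^{\delta_{\alpha,1}}\kappa_2^{\delta_{\alpha,2}}$, silently discarding the factor $\epsilon_N^{\delta_{\alpha,1}+\delta_{\alpha,2}}$ produced by squaring $(\sqrt{\kappa_1\epsilon_N})^{\delta_{\alpha,1}}(\sqrt{\kappa_2\epsilon_N})^{\delta_{\alpha,2}}$; carried out correctly, the paper's computation also yields only $2m_\alpha+\delta_{\alpha,1}+\delta_{\alpha,2}\geq0$, i.e.\ $m_\alpha\geq-1$. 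That weaker bound is uniform in $\alpha$ and $N$ and is all that the application needs (the finiteness claim in Theorem \ref{dis} requires only a lower bound on $m_\alpha$ together with $\epsilon_N>1$), so the discreteness results are unaffected. But judged as a proof of the statement as written, your final step is a gap, and it is one that cannot be closed.
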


\begin{proof}
If $a,b\geq0$, then $\sigma(\alpha)\leq\alpha$, which is true if and only if $\sigma(\alpha^2)\leq\alpha^2$.  Define the positive integer $q:=\ell_\alpha^2N^{\delta_{\alpha,0}}\kappa_1^{\delta_{\alpha,1}}\kappa_2^{\delta_{\alpha,2}}$ so that $\alpha^2=q\epsilon_N^{2m_\alpha}$.  Now $\sigma(\alpha^2)\leq\alpha^2$ if and only if $\epsilon_N^{2m_\alpha}\geq\sigma(\epsilon_N^{2m_\alpha})=\epsilon_N^{-2m_\alpha}$, which, as $\epsilon_N>1$, can be true if and only if $m_\alpha\geq0$.
\end{proof}

\begin{theorem}\label{dis}
The set of quadratic $d$-numbers of the form $a+b\sqrt{N}$ with $N\in\mathbb{Z}_{\geq1}$ and $a,b\in\frac{1}{2}\mathbb{Z}_{\geq0}$ is a discrete subset of $\mathbb{R}$.
\end{theorem}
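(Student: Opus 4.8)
The plan is to prove the stronger statement that for every $M\in\mathbb{Z}_{\geq1}$ only finitely many of these $d$-numbers lie in $[0,M]$; since all the numbers in question are nonnegative, finiteness on every $[0,M]$ is exactly local finiteness and hence discreteness. So fix $\alpha=a+b\sqrt{N}$ with $a,b\in\tfrac12\mathbb{Z}_{\geq0}$, $\alpha\in\mathfrak{D}_N$, and $0<\alpha\leq M$ (the point $\alpha=0$ is isolated and may be set aside, and $N=1$ contributes only ordinary integers, so assume $N\geq2$ square-free). The sign hypotheses give $\alpha\geq|\sigma(\alpha)|\geq0$, so setting $n:=\|\alpha\|=\alpha\,\sigma(\alpha)\in\mathbb{Z}$ I immediately get $|n|=\alpha\,|\sigma(\alpha)|\leq\alpha^2\leq M^2$. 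This is the first and decisive reduction: the integer norm $n$ ranges over a finite set independent of $N$.

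Next I would bring in the $d$-number hypothesis through Lemma \ref{def}(iii): taking $\tau=\sigma$ gives $u:=\alpha/\sigma(\alpha)\in\mathbb{A}^\times\cap\mathbb{Q}(\sqrt{N})=\mathcal{O}_N^\times$, and multiplying by $\alpha\sigma(\alpha)=n$ yields the governing identity $\alpha^2=n\,u$. Because $\alpha\geq|\sigma(\alpha)|$ we have $|u|\geq1$ (this is equivalently the content of $m_\alpha\geq0$ in Lemma \ref{greaterthenzero}), and since every unit is $\pm\epsilon_N^{m}$ I may write $u=\pm\epsilon_N^{\,k}$ with $k\in\mathbb{Z}_{\geq0}$, so $|u|=\epsilon_N^{\,k}$. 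Passing to absolute values in $\alpha^2=nu$ gives $|n|\,\epsilon_N^{\,k}=\alpha^2\leq M^2$.

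The argument then splits on $k$. If $k=0$ then $\alpha^2=\pm n$ is a rational integer with $0\leq\alpha^2\leq M^2$, so $\alpha=\sqrt{|n|}$ is the square root of one of finitely many integers, with no dependence on $N$ whatsoever. If $k\geq1$ then $\epsilon_N\leq\epsilon_N^{\,k}\leq M^2/|n|\leq M^2$; combined with the uniform lower bound $\epsilon_N=(t_N+u_N\sqrt{N})/2\geq(1+\sqrt{N})/2$ (both $t_N,u_N\geq1$, since $\epsilon_N>1>|\sigma(\epsilon_N)|$ forces $t_N=\epsilon_N+\sigma(\epsilon_N)>0$ and $u_N\sqrt{N}=\epsilon_N-\sigma(\epsilon_N)>0$) this forces $N<4M^4$. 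Hence only finitely many fields $\mathbb{Q}(\sqrt{N})$ occur; for each, $n$ ranges over finitely many integers, $k$ over finitely many exponents (as $\epsilon_N^{\,k}\leq M^2$), whereupon $\alpha^2=\pm n\,\epsilon_N^{\,k}$ is determined and $\alpha$ is pinned to finitely many positive square roots. Together the two cases give finiteness of the set on $[0,M]$.

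The genuine obstacle here is conceptual, not computational: a priori these $d$-numbers are drawn from infinitely many distinct fields $\mathbb{Q}(\sqrt{N})$ with $N\to\infty$, and one must rule out accumulation coming from that infinitude. The identity $\alpha^2=n\,u$ is what defuses this, as it either forces $\alpha^2$ to be a bounded rational integer (the $k=0$ case, uniform in $N$) or forces the fundamental unit $\epsilon_N$ to be bounded. The single inequality that earns its keep is the lower bound $\epsilon_N\geq(1+\sqrt{N})/2$, which converts an upper bound on $\epsilon_N$ into an upper bound on $N$; everything else is bookkeeping on top of Lemma \ref{def} and the classification recorded in Corollary \ref{corthree}.
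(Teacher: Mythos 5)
Your proof is correct, but it takes a genuinely different route from the paper's. The paper's proof rests on the classification machinery: for fixed $N$ it cites Lemma \ref{greaterthenzero} (the exponent $m_\alpha\geq0$ in the factorization of Corollary \ref{corthree}) together with $\epsilon_N>1$ to get finiteness in $[0,M]$, and it bounds the field by observing that when both $a,b>0$ the \emph{element itself} satisfies $\alpha\geq(1/2)(1+\sqrt{N})$, giving the quadratic bound $N\leq(2M-1)^2$. You instead bypass Theorem \ref{genthm} entirely and argue from first principles: Lemma \ref{def}(iii) plus the structure of $\mathcal{O}_N^\times$ gives the identity $\alpha^2=\|\alpha\|\,u$ with $u=\pm\epsilon_N^{k}$, $k\geq0$ (your derivation of $k\geq0$ from $\alpha\geq|\sigma(\alpha)|$ is the same argument as Lemma \ref{greaterthenzero}, and the identity itself is the order-two statement implicit in Lemmas \ref{lemmagal} and \ref{lemma}(iii)); then $|\|\alpha\||\leq M^2$ bounds the norm, and when $k\geq1$ you bound the field through the \emph{fundamental unit}, $\epsilon_N\leq M^2$ combined with $\epsilon_N\geq(1/2)(1+\sqrt{N})$, giving $N<4M^4$. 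Note that your case split $k=0$ versus $k\geq1$ coincides exactly with the paper's split on whether $a$ or $b$ vanishes. The trade-off: your argument is more self-contained, needing only the equivalent characterizations of $d$-numbers and Dirichlet's unit theorem rather than the full classification; the paper's argument yields the sharper quadratic bound on $N$, which is what feeds into the explicit cardinality estimate of Proposition \ref{boond} (your quartic bound on $N$ would still give a polynomial count, just a worse one).
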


\begin{proof}
If $N\in\mathbb{Z}_{\geq1}$ is fixed and $M\in\mathbb{R}$ is arbitrary, we claim the set of $a+b\sqrt{N}\in\mathfrak{D}_N$ with $a,b\in\frac{1}{2}\mathbb{Z}_{\geq0}$ and $a+b\sqrt{N}\leq M$ is \emph{finite}.  This follows from Lemma \ref{greaterthenzero} and the fact $\epsilon_N>1$.  Now if $N\in\mathbb{Z}_{\geq1}$ is arbitrary, by assumption either $a$ or $b$ is zero (or both), or 
\begin{equation}
(1/2)(1+\sqrt{N})\leq(a+b\sqrt{N})\leq M\label{eqforty}
\end{equation}
from which $N\leq(2M-1)^2$.  Thus for any $M\in\mathbb{R}$ the set of all such quadratic $d$-numbers less than $M$ is a finite union of finite sets and is moreover finite.
\end{proof}

\begin{corollary}\label{discrete}
The set of quadratic $d$-numbers $\alpha$ such that $|\sigma(\alpha)|\leq|\alpha|$ is discrete.
\end{corollary}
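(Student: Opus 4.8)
The plan is to derive discreteness in $\mathbb{C}$ from Theorem \ref{dis} by replacing each $\alpha$ with the representative of $\{\pm\alpha,\pm\sigma(\alpha)\}$ having nonnegative coordinates, handling the real and imaginary quadratic fields separately. Throughout I would use that being a $d$-number is preserved under multiplication by the unit $-1$ and under Galois conjugation (the conjugate of a Galois-invariant ideal is again Galois-invariant), so all four of $\pm\alpha,\pm\sigma(\alpha)$ are $d$-numbers whenever $\alpha$ is. Since a set is discrete exactly when it meets every disc in finitely many points, it suffices to fix $M$ and show that only finitely many $\alpha$ in question satisfy $|\alpha|\le M$.

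First I would treat the real quadratic $d$-numbers. Fix $\alpha=a+b\sqrt{N}\in\mathfrak{D}_N$ with $N\ge2$ and $a,b\in\tfrac12\mathbb{Z}$ (integers arising as the case $b=0$), and set $\beta:=|a|+|b|\sqrt{N}$. Because negating a coordinate preserves both the integrality and the $d$-number conditions, $\beta$ is a nonnegative-coordinate quadratic $d$-number, hence lies in the set of Theorem \ref{dis}, and $\beta\in\{\pm\alpha,\pm\sigma(\alpha)\}$. A short sign analysis according to whether $a,b$ share a sign shows $\{|\alpha|,|\sigma(\alpha)|\}=\{\beta,|\sigma(\beta)|\}$ as unordered pairs, while $\beta=|a|+|b|\sqrt{N}\ge\bigl||a|-|b|\sqrt{N}\bigr|=|\sigma(\beta)|$ always holds. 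Thus the hypothesis $|\sigma(\alpha)|\le|\alpha|$ forces $|\alpha|$ to be the larger of the pair, namely $|\alpha|=\beta$. Consequently $|\alpha|\le M$ gives $\beta\le M$, and Theorem \ref{dis} yields only finitely many such $\beta$; each determines its field $\mathbb{Q}(\beta)=\mathbb{Q}(\sqrt{N})$ and the pair $(|a|,|b|)$, so at most four sign choices $a=\pm|a|,\ b=\pm|b|$ map to it. Hence the real $\alpha$ with $|\alpha|\le M$ are finite in number.

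For the imaginary quadratic $d$-numbers the constraint is automatic: when $N<0$ the map $\sigma$ is complex conjugation, so $|\sigma(\alpha)|=|\alpha|$ for every $\alpha\in\mathfrak{D}_N$, and I would bound them directly. A non-real $\alpha=a+b\sqrt{N}$ has $|\operatorname{Im}(\alpha)|=|b|\sqrt{|N|}$ with $|b|\ge\tfrac12$, so $|\alpha|\le M$ forces $|N|\le 4M^2$, leaving finitely many fields. In each such field $\mathcal{O}_N$ is a lattice in $\mathbb{C}$, so $\mathfrak{D}_N\subset\mathcal{O}_N$ meets the disc $|z|\le M$ in a finite set (alternatively one invokes the explicit description of Example \ref{excomplex}), and the union over these finitely many $N$ is finite.

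Combining the two cases, the set meets every disc $|z|\le M$ in a finite set, which is the asserted discreteness in $\mathbb{C}$ (restricting to $\mathbb{R}$ on the real locus). The one genuinely delicate step is the real reduction: I must verify that the maximal representative $\beta$ simultaneously lands in the set governed by Theorem \ref{dis} and matches absolute values so that $|\alpha|=\beta$ under the hypothesis, and that the fibers of $\alpha\mapsto\beta$ are finite. Everything else is packaging around Theorem \ref{dis} and the lattice structure of imaginary quadratic integers.
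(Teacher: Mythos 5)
Your proof is correct and follows the paper's (implicit) route: the paper states this corollary without proof as an immediate consequence of Theorem \ref{dis}, and your reduction---replacing $\alpha$ by the nonnegative-coordinate representative $\beta\in\{\pm\alpha,\pm\sigma(\alpha)\}$, which preserves the $d$-number property and, under the hypothesis $|\sigma(\alpha)|\leq|\alpha|$, satisfies $|\alpha|=\beta$---together with the lattice/Example \ref{excomplex} treatment of imaginary quadratic fields, is exactly the intended argument. One small caveat: what you actually invoke is finiteness of the Theorem \ref{dis} set below any bound $M$, which is what its \emph{proof} establishes rather than its statement (bare discreteness is not equivalent to ``finite in every disc,'' as $\{1/n\}$ shows), so your appeal is to the proof of Theorem \ref{dis}---harmless here, since the paper proves precisely that finiteness.
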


\begin{note}
The set of totally positive quadratic $d$-numbers is not discrete.  Indeed $\epsilon^{-2}_N$ is totally positive for all square-free $N\in\mathbb{Z}_{\geq2}$ and $\lim_{k\to\infty}(\epsilon^{-2}_N)^k=0$.
\end{note}
\end{subsection}


\begin{subsection}{On solutions to the negative Pell equation}

The negative Pell Equation is $x^2-Ny^2=-1$ for non-square $N\in\mathbb{Z}_{\geq2}$ and it has been a pervasive open problem to determine for which $N$ there exist integer solutions $x,y$.  A broad spectrum of research exists on this problem which we do not attempt to summarize here (see \cite{etienne,kaplan1986pell,redei1935pellsche,trotter,negativeyokoi} for example).  But it is well-known that this problem is equivalent to the fundamental unit of $\mathbb{Q}(\sqrt{N})$ having norm $-1$.  Theorem \ref{genthm} provides a novel characterization of quadratic fields whose fundamental unit has norm $\pm1$ which we present here.
\begin{corollary}\label{cornormone}
Let $N\in\mathbb{Z}_{\geq2}$ be square-free.  Then $\|\epsilon_N\|=1$ if and only if there exists $\alpha\in\mathfrak{D}_N$ such that $\|\alpha\|$ is not a perfect square and $N$ does not divide $\|\alpha\|$.
\end{corollary}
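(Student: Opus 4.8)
The plan is to read everything off the classification in Theorem~\ref{genthm} (equivalently the canonical factorization of Corollary~\ref{corthree}) by computing the field norm of each monoidal generator and then invoking multiplicativity of $\|\cdot\|$. Since $\|\cdot\|$ takes negative values on a real quadratic field, I would phrase the statement in terms of $|\|\alpha\||$; that is, ``$\|\alpha\|$ is not a perfect square'' should be read as ``$|\|\alpha\||$ is not a perfect square'', since otherwise $\epsilon_N$ itself (of norm $-1$) would be a spurious witness. The two norm facts I need are $\|\sqrt{N}\| = -N$ and, whenever $\|\epsilon_N\|=1$, $|\|\sqrt{\kappa_i\epsilon_N}\|| = \kappa_i$ for $i=1,2$. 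The latter is immediate from multiplicativity, as $\|\sqrt{\kappa_i\epsilon_N}\|^2 = \|\kappa_i\epsilon_N\| = \kappa_i^2\|\epsilon_N\| = \kappa_i^2$; one may further pin down the signs via $t_N^2 - Nu_N^2 = 4\|\epsilon_N\|$ and the defining conditions of Lemma~\ref{above}, but the signs are not needed below.

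For the forward direction, assume $\|\epsilon_N\|=1$. I would produce the witness from among $\sqrt{\kappa_1\epsilon_N}$ and $\sqrt{\kappa_2\epsilon_N}$: each lies in $\mathfrak{D}_N$ (it is an algebraic integer of order dividing $2$, its trace squaring to the perfect square $\kappa_i(t_N\pm 2)$ of Lemma~\ref{above}) and satisfies $|\|\cdot\|| = \kappa_i$, a square-free integer $\geq 2$ and hence not a perfect square. It remains to choose $i$ with $N\nmid\kappa_i$, and here I would invoke the relation from the proof of Lemma~\ref{xyz}, namely $\kappa_1\kappa_2 = \gcd(\kappa_1,\kappa_2)^2 N$ with $N,\kappa_1,\kappa_2$ pairwise distinct and square-free.

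The \textbf{key step} is to rule out $N\mid\kappa_1$ and $N\mid\kappa_2$ simultaneously. If both held, then $N$ would divide $g:=\gcd(\kappa_1,\kappa_2)$; writing $\kappa_i = Nc_i$ with $c_i$ square-free and coprime to $N$ gives $c_1c_2 = g^2/N$, whose right-hand side is divisible by $N$ (as $N\mid g$) while the left-hand side is coprime to $N$ --- a contradiction since $N\geq 2$. Thus some $\kappa_i$ is not divisible by $N$, and $\alpha = \sqrt{\kappa_i\epsilon_N}$ is a $d$-number whose norm has absolute value $\kappa_i$, which is not a perfect square and is not divisible by $N$. For the converse, assume $\|\epsilon_N\|=-1$, so Theorem~\ref{genthm} gives $\mathfrak{D}_N = \langle\sqrt{N}\rangle$ and every $\alpha\in\mathfrak{D}_N$ has the form $\alpha = \ell\,\epsilon_N^{m}(\sqrt{N})^{\delta}$ with $\delta\in\{0,1\}$. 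Multiplicativity then yields $|\|\alpha\|| = \ell^2 N^{\delta}$, since $|\|\epsilon_N\||=1$. If $\delta=1$ then $N\mid\|\alpha\|$, and if $\delta=0$ then $|\|\alpha\||=\ell^2$ is a perfect square; either way $\alpha$ is not a witness, and taking the contrapositive forces $\|\epsilon_N\|=1$ whenever a witness exists.

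The main obstacle I anticipate is not any single computation but the combination of correctly handling the perfect-square convention in the presence of negative norms and the elementary-but-delicate number theory establishing that $N$ cannot divide both $\kappa_1$ and $\kappa_2$. Once the relation $\kappa_1\kappa_2 = \gcd(\kappa_1,\kappa_2)^2N$ is in hand, this last point is the genuine content of the forward implication; the converse is then a routine norm bookkeeping.
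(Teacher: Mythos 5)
Your proof is correct and takes essentially the same route as the paper: the forward direction produces the witness $\sqrt{\kappa_i\epsilon_N}$ with $N\nmid\kappa_i$ (the paper rules out $N$ dividing both $\kappa_1$ and $\kappa_2$ from the relation $\mathbb{Q}(\sqrt{\kappa_1\kappa_2})=\mathbb{Q}(\sqrt{N})$, which is the same fact underlying your $\kappa_1\kappa_2=\gcd(\kappa_1,\kappa_2)^2N$ argument), and the converse enumerates norms $\pm\ell^2N^{\delta}$ from $\mathfrak{D}_N=\langle\sqrt{N}\rangle$. Your explicit reading of ``perfect square'' as applying to $|\|\alpha\||$ is a worthwhile clarification that the paper leaves implicit.
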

\begin{proof}
Recall from the proof of Theorem \ref{genthm} that if $\|\epsilon_N\|=1$, there exist square-free $\kappa_1,\kappa_2$ such that $\mathbb{Q}(\sqrt{\kappa_1\kappa_2})=\mathbb{Q}(\sqrt{N})$ which implies $N$ cannot divide both $\kappa_1$ and $\kappa_2$.  Let $\kappa$ be the $\kappa_i$ which is not divisible by $N$.  Then $\sqrt{\kappa\epsilon_N}\in\mathfrak{D}_N$ and $\|\sqrt{\kappa\epsilon_N}\|=\kappa$ which is square-free and not divisible by $N$.  Conversely when $\|\epsilon_N\|=-1$, all $\alpha\in\mathfrak{D}_N$ have a norm which is either a perfect square integer or a perfect square integer multiple of $\|\sqrt{N}\|=-N$ or $\|\epsilon_N\sqrt{N}\|=N$.
\end{proof}
This condition can easily be restated without reference to quadratic $d$-numbers.
\begin{example}\label{kappaequalstwo}
We can find many $d$-numbers satisfying the characterizing condition of Corollary \ref{cornormone} in the case $\|\epsilon_N\|=1$.  It must have minimal polynomial of the form $x^2-nx+\kappa$ for some $n\in\mathbb{Z}_{\geq1}$ where the signs are irrelevant, but we have made a choice so that the resulting $\alpha$ will be positive and maximal among its Galois conjugates.   In the case $\kappa=2$, every \emph{even} $n\in\mathbb{Z}_{\geq1}$ gives the minimal polynomial of a $d$-number $\alpha$ with $\|\alpha\|$ square-free and not divisible by $N$ unless $N=2$.  In particular $\alpha=(1/2)(n+\sqrt{n^2-8})$, so all fields of the form $\mathbb{Q}(\sqrt{N})=\mathbb{Q}(\sqrt{n^2-8})$ for even $n\in\mathbb{Z}_{\geq4}$ and $N\in\mathbb{Z}_{\geq3}$ square-free have $\|\epsilon_N\|=1$.
\end{example}
We now generalize Example \ref{kappaequalstwo}.  If $\kappa\in\mathbb{Z}_{\geq3}$ is square-free, and if a $d$-number $\alpha$ exists of norm $\kappa$ it must have minimal polynomial $x^2-n\kappa x+\kappa$ for some $n\in\mathbb{Z}_{\geq1}$ (where again the choice of signs is irrelevant).  The solution $\alpha=(1/2)(n\kappa+\sqrt{(\kappa n)^2-4\kappa)})$ is a $d$-number with $\|\alpha\|=\kappa$ lying in the field $\mathbb{Q}(\sqrt{(\kappa n)^2-4\kappa})$.  As $\kappa\geq3$ is square-free, then $\kappa n^2-4$ is not divisible by $\kappa$ and hence $\alpha$ is always irrational.  Furthermore, observe $\gcd(\kappa,\kappa n^2-4)$ is 1 or 2 depending if $\kappa$ is odd or even, respectively.  So if $\mathbb{Q}(\sqrt{N})=\mathbb{Q}(\sqrt{(\kappa n)^2-4\kappa})$ for some square-free $N\in\mathbb{Z}_{\geq2}$ then $\kappa$ is divisible by $N$ if and only if $\kappa n^2-4$ is a perfect square, proving the following.
\begin{proposition}\label{pell0}
Let $N\in\mathbb{Z}_{\geq2}$ be square-free.  There exists square-free $\kappa\in\mathbb{Z}_{\geq2}$ and $n\in\mathbb{Z}_{\geq1}$ such that $\kappa n^2-4$ is not a perfect square and $\mathbb{Q}(\sqrt{N})=\mathbb{Q}(\sqrt{(\kappa n)^2-4\kappa})$ if and only if $\|\epsilon_N\|=1$.
\end{proposition}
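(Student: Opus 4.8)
The plan is to read off both implications from Corollary \ref{cornormone}, which already equates $\|\epsilon_N\|=1$ with the existence of a $d$-number whose norm is non-square and not divisible by $N$. The bridge between that criterion and the statement at hand is the parametrization developed immediately above: a $d$-number of square-free norm $\kappa\geq2$ is forced to take the shape $(1/2)(n\kappa+\sqrt{(\kappa n)^2-4\kappa})$, and the condition that $\kappa n^2-4$ is not a perfect square is precisely what encodes that $N$ does not divide $\kappa$. So the whole argument is a translation between the language of Corollary \ref{cornormone} and the language of biquadratic discriminants.

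For the implication that the stated arithmetic condition forces $\|\epsilon_N\|=1$, I would set $\alpha:=(1/2)(n\kappa+\sqrt{(\kappa n)^2-4\kappa})$, whose minimal polynomial is $x^2-n\kappa x+\kappa$ since $(\kappa n)^2-4\kappa$ is not a perfect square (otherwise $\mathbb{Q}(\sqrt{N})$ would collapse to $\mathbb{Q}$, contradicting $N\geq2$). Lemma \ref{def} (vi) applies with $a_1=-n\kappa$ and $a_2=\kappa$ because $\kappa\mid(n\kappa)^2$, so $\alpha$ is a $d$-number, and $\alpha\in\mathbb{Q}(\sqrt{(\kappa n)^2-4\kappa})=\mathbb{Q}(\sqrt{N})$ places it in $\mathfrak{D}_N$. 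Its norm is $\kappa$, which is square-free and hence not a perfect square, while the hypothesis that $\kappa n^2-4$ is not a perfect square yields $N\nmid\kappa$ through the divisibility dictionary recalled above. Corollary \ref{cornormone} then delivers $\|\epsilon_N\|=1$.

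For the converse, assuming $\|\epsilon_N\|=1$, Corollary \ref{cornormone} together with the explicit witness in its proof supplies $\alpha=\sqrt{\kappa\epsilon_N}\in\mathfrak{D}_N$, where $\kappa$ is the one of $\kappa_1,\kappa_2$ not divisible by $N$; it is square-free with $\kappa\geq2$. Since $\alpha>0$ and $\|\alpha\|=\kappa>0$, the trace $t=\alpha+\sigma(\alpha)$ is a positive integer, and Lemma \ref{def} (v) forces $\kappa\mid t^2$, hence $\kappa\mid t$ because $\kappa$ is square-free; writing $t=n\kappa$ with $n\in\mathbb{Z}_{\geq1}$ gives $\alpha=(1/2)(n\kappa+\sqrt{(\kappa n)^2-4\kappa})$. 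In particular $\mathbb{Q}(\sqrt{N})=\mathbb{Q}(\sqrt{(\kappa n)^2-4\kappa})$, and $N\nmid\kappa$ translates back to $\kappa n^2-4$ not being a perfect square, producing the required pair $(\kappa,n)$.

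The only genuinely arithmetic ingredient, and the step where I would be most careful, is the biconditional $N\mid\kappa\iff\kappa n^2-4$ is a perfect square. This rests on identifying the square-free part of $(\kappa n)^2-4\kappa=\kappa(\kappa n^2-4)$: writing $\kappa n^2-4=df^2$ with $d$ square-free, one checks $\gcd(\kappa,\kappa n^2-4)\in\{1,2\}$ by splitting on the parity of $\kappa$ and using that $\kappa$ is square-free, so that $N$ equals $\kappa d$ after stripping a possible factor of $4$, and $N\mid\kappa$ reduces to $d=1$. Since this computation is exactly what the paragraph preceding the statement carries out, in the write-up I would simply invoke it; the remainder of the proof is bookkeeping around Corollary \ref{cornormone} and Lemma \ref{def}.
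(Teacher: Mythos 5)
Your proposal retraces the paper's own argument---the same parametrization $(1/2)(n\kappa+\sqrt{(\kappa n)^2-4\kappa})$ of $d$-numbers of square-free norm $\kappa$, funneled through Corollary \ref{cornormone}---and your converse direction is sound, since it only uses the true implication that $\kappa n^2-4$ a perfect square forces $N=\kappa$, in contrapositive form. The genuine gap is in the forward direction, at exactly the step you isolated as the one needing care: the reduction ``$N\mid\kappa$ reduces to $d=1$'' is false. Writing $\kappa n^2-4=df^2$ with $d$ square-free, the square-free part of $\kappa(\kappa n^2-4)$ is $N=\kappa d/\gcd(\kappa,d)^2$ with $\gcd(\kappa,d)\in\{1,2\}$; when $\kappa$ is even and $d=2$ this gives $N=\kappa/2$, which divides $\kappa$ even though $\kappa n^2-4$ is not a perfect square. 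Concretely, take $\kappa=6$, $n=1$: then $(\kappa n)^2-4\kappa=12$, so $N=3$ and $N\mid\kappa$, while $\kappa n^2-4=2$ is not a square. The witness $\alpha=3+\sqrt{3}$ has $\|\alpha\|=6$, divisible by $N=3$, so Corollary \ref{cornormone} cannot be invoked with it and your forward implication stalls. You inherited this flaw faithfully: the sentence preceding the proposition in the paper asserts the same false biconditional, so the published proof is incomplete in exactly the same way.

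The proposition itself survives, and the repair is short. The failure case forces $\kappa=2N$ with $N$ odd and $\kappa n^2-4=2f^2$, i.e. $f^2-Nn^2=-2$. Both $n$ and $f$ must then be odd, and reducing modulo $8$ gives $N\equiv3\pmod{8}$; hence $N$ has a prime factor congruent to $3$ modulo $4$, and $\|\epsilon_N\|=1$ by the criterion the paper already uses in Example \ref{threeplus} (Theorem 11.5.5 of \cite{alaca}). Alternatively, if $\|\epsilon_N\|=-1$, then $\beta:=f+n\sqrt{N}$ has norm $-2$ and $\beta^2/2$ is a unit of norm $1$, hence $\beta^2/2=\epsilon_N^{2k}$ by positivity, so $(\beta\epsilon_N^{-k})^2=2$ and $\sqrt{2}\in\mathbb{Q}(\sqrt{N})$, impossible for odd $N$. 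Splitting off this exceptional case (in your write-up and in the paper's) closes the gap; without it, the step ``$\kappa n^2-4$ not a perfect square yields $N\nmid\kappa$'' is simply not true.
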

\begin{corollary}\label{pell}
Let $N\in\mathbb{Z}_{\geq2}$ be square-free.  There exist integer solutions to the negative Pell equation if and only if $\mathbb{Q}(\sqrt{N})\neq\mathbb{Q}(\sqrt{(\kappa n)^2-4\kappa})$ for all square-free $\kappa\in\mathbb{Z}_{\geq2}$ and $n\in\mathbb{Z}_{\geq1}$ such that $\kappa n^2-4$ is not a perfect square.
\end{corollary}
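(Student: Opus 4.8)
The plan is to derive this purely from Proposition~\ref{pell0} together with the classical dictionary between the negative Pell equation and the norm of the fundamental unit; the genuine arithmetic content has already been established in Proposition~\ref{pell0}, so what remains is a translation and a logical negation.

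First I would recall the well-known equivalence, stated in the preamble to this subsection, that $x^2-Ny^2=-1$ admits an integer solution if and only if $\|\epsilon_N\|=-1$. The direction worth spelling out is that an integer solution $(x,y)$ produces the unit $x+y\sqrt{N}\in\mathbb{Z}[\sqrt{N}]\subseteq\mathcal{O}_N$ of norm $-1$; conversely, since every element of $\mathcal{O}_N^\times$ is of the form $\pm\epsilon_N^m$ and the norm is multiplicative with $\|-1\|=1$, a unit of norm $-1$ exists if and only if $\|\epsilon_N\|=-1$, in which case a suitable odd power of $\epsilon_N$ lies in $\mathbb{Z}[\sqrt{N}]$ and exhibits the desired solution.

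Next I would use that $\epsilon_N\in\mathcal{O}_N^\times$ forces $\|\epsilon_N\|\in\{1,-1\}$, so that the condition $\|\epsilon_N\|=-1$ is precisely the negation of $\|\epsilon_N\|=1$. I then negate the biconditional of Proposition~\ref{pell0}: that statement asserts $\|\epsilon_N\|=1$ holds if and only if there \emph{exist} square-free $\kappa\in\mathbb{Z}_{\geq2}$ and $n\in\mathbb{Z}_{\geq1}$ with $\kappa n^2-4$ not a perfect square and $\mathbb{Q}(\sqrt{N})=\mathbb{Q}(\sqrt{(\kappa n)^2-4\kappa})$, so its negation reads: $\|\epsilon_N\|\neq1$ if and only if for \emph{all} such $\kappa,n$ one has $\mathbb{Q}(\sqrt{N})\neq\mathbb{Q}(\sqrt{(\kappa n)^2-4\kappa})$. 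Chaining the three equivalences, namely negative Pell solvable $\iff\|\epsilon_N\|=-1\iff\|\epsilon_N\|\neq1\iff$ no admissible $(\kappa,n)$ realizes $\mathbb{Q}(\sqrt{N})$, yields the corollary.

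Since all the substance is carried by Proposition~\ref{pell0}, I do not anticipate a genuine obstacle. The only points demanding care are the correct invocation of the classical Pell equivalence, in particular that a norm $-1$ unit of the maximal order $\mathcal{O}_N$ descends, via an odd power, to an honest integer solution in $\mathbb{Z}[\sqrt{N}]$ even when $N\equiv1\pmod4$, and ensuring the quantifier flips correctly when the existential statement of Proposition~\ref{pell0} is negated into the universal ``for all $\kappa,n$'' form appearing in the corollary.
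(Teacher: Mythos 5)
Your proposal is correct and matches the paper's intended argument: Corollary \ref{pell} is stated there without proof precisely because it is the chain you describe, namely the classical equivalence of negative Pell solvability with $\|\epsilon_N\|=-1$ combined with the contrapositive (quantifier-negated) form of Proposition \ref{pell0}. Your extra care about the odd-power descent of a norm $-1$ unit into $\mathbb{Z}[\sqrt{N}]$ when $N\equiv1\pmod{4}$ is sound and only makes explicit what the paper cites as well known.
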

\end{subsection}


\begin{subsection}{Toward a study of weakly quadratic fusion categories}\label{sec:toward}
Recall (Definition \ref{deffour}) that a weakly quadratic fusion category $\mathcal{C}$ is one such that $\mathrm{FPdim}(\mathcal{C})\in\mathfrak{D}_N$ for a square-free $N\in\mathbb{Z}_{\geq2}$, and Lemma \ref{totally} states $\mathrm{FPdim}(\mathcal{C})\geq\sigma(\mathrm{FPdim}(\mathcal{C}))\geq1$, motivating the study of the following subsets of $d$-numbers.
\begin{definition}\label{totallydeaf}
Let $N\in\mathbb{Z}_{\geq2}$ be square-free.  We define
\begin{equation}
\mathfrak{D}_N^{+1}:=\{\alpha\in\mathfrak{D}_N:\alpha\geq\sigma(\alpha)\geq1\},
\end{equation}
and the union of $\mathfrak{D}_N^{+1}$ over all square-free $N\in\mathbb{Z}_{\geq2}$ by $\mathfrak{D}^{+1}$.
\end{definition}
The following result is the basis of our entire study, and is a restricted case of Corollary \ref{discrete}.
\begin{corollary}
Let $N\in\mathbb{Z}_{\geq2}$ be square-free.  The set $\mathfrak{D}^{+1}$ is a discrete subset of $\mathbb{R}$.
\end{corollary}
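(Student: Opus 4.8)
The plan is to exhibit $\mathfrak{D}^{+1}$ as a subset of the discrete set already produced in Corollary \ref{discrete} and then invoke the elementary fact that any subset of a discrete subset of $\mathbb{R}$ is itself discrete. Note that the index $N$ named in the statement is vestigial: the definition of $\mathfrak{D}^{+1}$ already ranges over all square-free $N\in\mathbb{Z}_{\geq2}$, so no single $N$ enters and the argument is uniform.

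First I would take an arbitrary $\alpha\in\mathfrak{D}^{+1}$, so that $\alpha\in\mathfrak{D}_N$ for some square-free $N\in\mathbb{Z}_{\geq2}$ with $\alpha\geq\sigma(\alpha)\geq1$. Since $N\geq2$ the field $\mathbb{Q}(\sqrt{N})$ is real, so both $\alpha$ and $\sigma(\alpha)$ are real numbers, and the chain $\alpha\geq\sigma(\alpha)\geq1>0$ shows both are positive. Hence $|\alpha|=\alpha$ and $|\sigma(\alpha)|=\sigma(\alpha)$, whence $|\sigma(\alpha)|=\sigma(\alpha)\leq\alpha=|\alpha|$. Thus every element of $\mathfrak{D}^{+1}$ is a quadratic $d$-number satisfying $|\sigma(\alpha)|\leq|\alpha|$; that is, $\mathfrak{D}^{+1}$ is contained in the set shown to be discrete in Corollary \ref{discrete}. (Equivalently, writing $\alpha=a+b\sqrt{N}$ with $a,b\in\tfrac{1}{2}\mathbb{Z}$, the inequality $\alpha\geq\sigma(\alpha)$ forces $b\geq0$ and $\sigma(\alpha)\geq1$ forces $a\geq1+b\sqrt{N}>0$, so $\alpha$ is of the nonnegative-coordinate form governed directly by Theorem \ref{dis}.)

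It then remains to record that discreteness passes to subsets: if $T\subseteq\mathbb{R}$ is discrete and $S\subseteq T$, then for each $s\in S$ there is an open interval $U$ with $U\cap T=\{s\}$, so a fortiori $U\cap S=\{s\}$, and $S$ is discrete. Taking $T$ to be the set of Corollary \ref{discrete} and $S=\mathfrak{D}^{+1}$ yields the claim. I do not anticipate any genuine obstacle here: the substantive difficulty — discreteness of a family of $d$-numbers drawn from infinitely many distinct quadratic fields — was already dispatched by Theorem \ref{dis}, and this corollary merely isolates the positive-conjugate cone $\alpha\geq\sigma(\alpha)\geq1$ which, by Lemma \ref{totally}, is exactly the regime containing the Frobenius-Perron dimensions of weakly quadratic fusion categories.
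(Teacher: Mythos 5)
Your proposal is correct and matches the paper's justification: the paper simply notes this corollary is a ``restricted case'' of Corollary \ref{discrete}, i.e.\ $\mathfrak{D}^{+1}$ is contained in the discrete set of quadratic $d$-numbers with $|\sigma(\alpha)|\leq|\alpha|$, which is exactly your containment-plus-subsets-of-discrete-sets argument. Your extra parenthetical reduction to the nonnegative-coordinate form of Theorem \ref{dis} is a harmless elaboration of the same route.
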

Theorem \ref{genthm} then allows for an effective bound on the size of $\mathfrak{D}^{+1}$.
\begin{proposition}\label{boond}
Let $M\in\mathbb{Z}_{\geq1}$.  The cardinality of $\mathfrak{D}^{+1}\cap\mathbb{R}_{\leq M}$ is less than $8M(M+1)(2M-1)^2$.
\end{proposition}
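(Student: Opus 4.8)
The plan is to count, for each admissible square-free $N$, the number of elements of $\mathfrak{D}_N^{+1}$ lying in $[1,M]$, and then sum over the finitely many relevant $N$. I would first bound the range of $N$. By the argument in the proof of Theorem \ref{dis}, any strictly quadratic $\alpha=a+b\sqrt{N}$ with $a,b\in\tfrac12\mathbb{Z}_{\geq0}$ and $\alpha\leq M$ forces $(1/2)(1+\sqrt{N})\leq M$, hence $N\leq(2M-1)^2$. Since elements of $\mathfrak{D}^{+1}$ that are rational are just positive integers in $[1,M]$ (contributing at most $M$ to the count, a negligible term against the claimed bound), the substantive task is to count the irrational elements for each square-free $N$ with $2\leq N\leq(2M-1)^2$.

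Next I would fix such an $N$ and bound $|\mathfrak{D}_N^{+1}\cap[1,M]|$. Using Corollary \ref{corthree}, every $\alpha\in\mathfrak{D}_N$ factors uniquely as $\alpha=\ell_\alpha\epsilon_N^{m_\alpha}(\sqrt{N})^{\delta_0}(\sqrt{\kappa_1\epsilon_N})^{\delta_1}(\sqrt{\kappa_2\epsilon_N})^{\delta_2}$ with $\delta_i\in\{0,1\}$. For $\alpha\in\mathfrak{D}_N^{+1}$ we have $\alpha\geq\sigma(\alpha)\geq 1$, so in particular $\alpha\geq1$, and since $\alpha\sigma(\alpha)=\pm\|\alpha\|$ with $\sigma(\alpha)\geq1$ we also get $\alpha\leq|\alpha\sigma(\alpha)|=|\|\alpha\||\leq M^2$ (more simply, $\alpha\leq M$ is the hypothesis). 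Lemma \ref{greaterthenzero} guarantees $m_\alpha\geq0$. Because $\epsilon_N>1$ and $\alpha\geq\epsilon_N^{\,m_\alpha}\cdot(\text{positive factor}\geq1)$, the constraint $\alpha\leq M$ caps the exponent $m_\alpha$, and the positive integer $\ell_\alpha$ is likewise bounded since $\alpha\geq\ell_\alpha$ (after absorbing the square-root factors, each $\geq1$ in the totally positive regime). The three sign choices $\delta_0,\delta_1,\delta_2$ contribute a factor of $2^3=8$. The crux is therefore to show that the number of admissible pairs $(\ell_\alpha,m_\alpha)$ is at most $M(M+1)(2M-1)^2/(8\cdot 2^3)$-sized per $N$; I expect the clean bound to come from $\alpha\leq M$ giving $\ell_\alpha\leq M$ and, after summing the geometric growth in $m_\alpha$, a bounded number of $m_\alpha$ per $(\ell_\alpha,\delta)$.

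Summing the per-$N$ count over all square-free $N$ in $[2,(2M-1)^2]$ yields the total. There are at most $(2M-1)^2$ values of $N$ to consider, the eight sign vectors give a factor $8$, and the $(\ell_\alpha,m_\alpha)$ count is bounded by roughly $M(M+1)$ after accounting for $\ell_\alpha\in\{1,\dots,M\}$ and the logarithmically-many (hence crudely at most $M$) choices of $m_\alpha$; multiplying these crude factors produces $8M(M+1)(2M-1)^2$. The main obstacle is bookkeeping: pinning down simultaneously that $\ell_\alpha\leq M$ and that $m_\alpha$ ranges over a set whose size, summed appropriately, contributes the factor $(M+1)$ rather than something larger, all while ensuring the totally-positive factors $\sqrt{N},\sqrt{\kappa_i\epsilon_N}$ only help (being $\geq1$ when paired with $\sigma(\alpha)\geq1$) rather than forcing a finer analysis. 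I would handle this by working with $\alpha^2=\ell_\alpha^2 N^{\delta_0}\kappa_1^{\delta_1}\kappa_2^{\delta_2}\,\epsilon_N^{2m_\alpha}\leq M^2$ as in Lemma \ref{greaterthenzero}, which converts all the constraints into a single inequality in the nonnegative integer unknowns and makes the counting transparent; the strictness of the bound then follows from the crudeness of each individual estimate.
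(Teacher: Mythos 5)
Your proposal is correct and follows essentially the same route as the paper's proof: both use Corollary \ref{corthree} to index each candidate $\alpha\in\mathfrak{D}_N^{+1}$ by a tuple $(\ell_\alpha,m_\alpha,\delta_0,\delta_1,\delta_2)$ with $\ell_\alpha\leq M$, $m_\alpha$ a nonnegative integer at most $M$ (via Lemma \ref{greaterthenzero} and the fact that $\epsilon_N\geq(1+\sqrt{5})/2>3/2$ forces the exponent to be crudely bounded by $M$), three binary choices giving the factor $8$, and at most $(2M-1)^2$ admissible $N$ from the proof of Theorem \ref{dis}. The only cosmetic difference is that you split off the rational elements separately, whereas the paper absorbs them into the same crude tuple count; either way the slack in the estimates yields the strict inequality.
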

\begin{proof}
We assume for simplicity that all fundamental units are norm 1 and all integers are square-free to create a generous upper bound.  With these assumptions, each potential $\alpha\in\mathfrak{D}^{+1}_N$ is indexed by a positive integer less than or equal to $M$, a non-negative integer less than or equal to $M$ (as $(3/2)^n>n$ for $n\geq2$ and $(1+\sqrt{5})/2>3/2$ is the smallest fundamental unit), and at most three selections of $0$ or $1$.  As noted in the proof of Theorem \ref{dis}, there are less than or equal to $(2M-1)^2$ potential $N$. 
\end{proof}
\begin{example}\label{threeplus}
We compute $\mathfrak{D}^{+1}\cap\mathbb{R}_{\leq5}$.  If $\alpha\in\mathfrak{D}^{+1}_N$ for some square-free $N\in\mathbb{Z}_{\geq2}$ we must have $N\leq(2\cdot5-1)^2=81$ by (\ref{eqforty}).  There are $49$ square-free integers less than or equal to $81$.  Of these, 9 are prime $N\leq81$ with $N\equiv1\pmod{4}$ and 4 are $2p$ for some prime $p\equiv5\pmod{8}$.  Theorems 11.5.4 and 11.5.6 of \cite{alaca} state that $\|\epsilon_N\|=-1$ in these cases, respectively.  Of the remaining square-free $N\leq81$, $33$ are divisible by a prime $p\equiv3\pmod{4}$ and thus $\|\epsilon_N\|=1$ in these cases \cite[Theorem 11.5.5]{alaca}.  This leaves 3 to scrutinize by hand: $\|\epsilon_2\|=-1$,  $\|\epsilon_{34}\|=1$ and $\|\epsilon_{65}\|=-1$.  A brutish, but finite computation gives
\begin{equation}
\mathfrak{D}^{+1}\cap\mathbb{R}_{\leq5}=\{1,2,3,(1/2)(5+\sqrt{5}),4,3+\sqrt{3},5\}.
\end{equation}
\end{example}
If one studies the cases $\|\epsilon_N\|=\pm1$ individually, more general results can be proven.  For instance let $N\in\mathbb{Z}_{\geq2}$ be square-free.  If $\|\epsilon_N\|=1$, then $\ell\epsilon_N^m\sqrt{N}\not\in\mathfrak{D}_N^{+1}$ for all $\ell,m\in\mathbb{Z}_{\geq0}$.  To see this, as $\|\epsilon_N\|=1$, $\sigma(\epsilon_N)=\epsilon_N^{-1}$ and thus $\sigma(\ell\epsilon_N^m\sqrt{N})=-\ell\epsilon_N^{-m}\sqrt{N}<0$.  The following stronger result is in the same spirit for units of norm $-1$.
\begin{proposition}\label{thmbound}
Let $N\in\mathbb{Z}_{\geq2}$ be square-free with $\|\epsilon_N\|=-1$.  If $\alpha\in\mathfrak{D}^{+1}_N$, then
\begin{equation}
\ell_\alpha\geq\epsilon_N^{m_\alpha}(\sqrt{N})^{-\delta_0}\qquad\text{ and }\qquad\alpha\geq\epsilon_N^{2m_\alpha}.
\end{equation}
\end{proposition}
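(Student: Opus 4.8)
The plan is to reduce everything to the explicit factorization supplied by Theorem \ref{genthm} and Corollary \ref{corthree}. Since $\|\epsilon_N\|=-1$ we are in the first case of (\ref{twentyfour}), so $\mathfrak{D}_N=\langle\sqrt{N}\rangle$ and the generators $\sqrt{\kappa_1\epsilon_N},\sqrt{\kappa_2\epsilon_N}$ are absent; hence every $\alpha\in\mathfrak{D}_N$ has the reduced form $\alpha=\ell_\alpha\epsilon_N^{m_\alpha}(\sqrt{N})^{\delta_0}$ with $\delta_0:=\delta_{\alpha,0}\in\{0,1\}$ and $\delta_{\alpha,1}=\delta_{\alpha,2}=0$. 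Because $N>0$ the field $\mathbb{Q}(\sqrt{N})$ is real, so $\alpha$ and $\sigma(\alpha)$ are genuine real numbers and the ordering in Definition \ref{totallydeaf} makes sense. First I would record the arithmetic consequence of the hypothesis: from $\|\epsilon_N\|=\epsilon_N\,\sigma(\epsilon_N)=-1$ we get $\sigma(\epsilon_N)=-\epsilon_N^{-1}$, and of course $\sigma(\sqrt{N})=-\sqrt{N}$.

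Next I would apply $\sigma$ to the factorization and collect signs, obtaining
\[
\sigma(\alpha)=(-1)^{m_\alpha+\delta_0}\,\ell_\alpha\,\epsilon_N^{-m_\alpha}\,(\sqrt{N})^{\delta_0}.
\]
The hypothesis $\alpha\in\mathfrak{D}_N^{+1}$ means $\alpha\geq\sigma(\alpha)\geq1>0$, so both conjugates are strictly positive. As $\epsilon_N>1$ and $\sqrt{N}>0$ are positive reals, positivity of $\alpha$ forces $\ell_\alpha>0$, and then positivity of $\sigma(\alpha)$ forces $(-1)^{m_\alpha+\delta_0}=+1$; that is, $m_\alpha+\delta_0$ is even. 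With this parity pinned down the expression collapses to $\sigma(\alpha)=\ell_\alpha\,\epsilon_N^{-m_\alpha}\,(\sqrt{N})^{\delta_0}$.

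The first inequality then falls out of $\sigma(\alpha)\geq1$: multiplying $\ell_\alpha\epsilon_N^{-m_\alpha}(\sqrt{N})^{\delta_0}\geq1$ through by the positive quantity $\epsilon_N^{m_\alpha}$ and dividing by $(\sqrt{N})^{\delta_0}$ yields $\ell_\alpha\geq\epsilon_N^{m_\alpha}(\sqrt{N})^{-\delta_0}$. For the second inequality I would substitute this lower bound for $\ell_\alpha$ back into $\alpha=\ell_\alpha\epsilon_N^{m_\alpha}(\sqrt{N})^{\delta_0}$; since the cofactor $\epsilon_N^{m_\alpha}(\sqrt{N})^{\delta_0}$ is strictly positive the inequality is preserved, and the two powers of $\sqrt{N}$ cancel, leaving $\alpha\geq\epsilon_N^{2m_\alpha}$.

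The argument is genuinely short, so there is no deep obstacle; the only step requiring care is the sign and parity bookkeeping in the second paragraph. The governing observation is that a norm $-1$ fundamental unit sends $\epsilon_N$ to $-\epsilon_N^{-1}$, so each application of $\sigma$ simultaneously inverts the unit — turning $\epsilon_N^{m_\alpha}$ into $\epsilon_N^{-m_\alpha}$, which is exactly the disparity exploited in both bounds — and injects a sign that must cancel against the sign coming from $\sigma(\sqrt{N})=-\sqrt{N}$ in order to keep $\sigma(\alpha)$ positive. I note that one could invoke Lemma \ref{greaterthenzero} to conclude $m_\alpha\geq0$ (writing $\alpha=a+b\sqrt{N}$, the conditions $\alpha\geq\sigma(\alpha)\geq1$ force $a,b\geq0$), but this is not actually needed: both inequalities hold irrespective of the sign of $m_\alpha$.
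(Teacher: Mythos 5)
Your proof is correct and follows essentially the same route as the paper's: both establish the parity condition $m_\alpha\equiv\delta_0\pmod{2}$ so that the signs from $\sigma(\epsilon_N)=-\epsilon_N^{-1}$ and $\sigma(\sqrt{N})=-\sqrt{N}$ cancel, then read off $\ell_\alpha\geq\epsilon_N^{m_\alpha}(\sqrt{N})^{-\delta_0}$ from $\sigma(\alpha)\geq1$ and substitute back to get $\alpha\geq\epsilon_N^{2m_\alpha}$. Your write-up merely makes explicit the sign bookkeeping (and the positivity of $\ell_\alpha$) that the paper's proof asserts in a single line.
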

\begin{proof}
If $\|\epsilon_N\|=-1$ and $\alpha\in\mathfrak{D}^{+1}_N$, then $m_\alpha\equiv\delta_0\pmod{2}$.  We must have $\ell_\alpha\sigma(\epsilon_N^{m_\alpha}(\sqrt{N})^{\delta_0})\geq1$ by assumption.  As $m_\alpha\equiv\delta_0\pmod{2}$, the signs introduced by the action of $\sigma$ are negated.  Hence $\ell_\alpha\geq\epsilon_N^{m_\alpha}(\sqrt{N})^{-\delta_0}$ and the second statement follows immediately.
\end{proof}

\begin{corollary}\label{lol}
Let $N\in\mathbb{Z}_{\geq2}$ be square-free with $\|\epsilon_N\|=-1$ and $M\in\mathbb{R}_{\geq1}$.  If $\alpha\in\mathfrak{D}_N^{+1}\cap\mathbb{R}_{\leq M}$, then $N+2\sqrt{N}\leq4M-1$.
\end{corollary}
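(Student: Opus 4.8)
The plan is to reduce the claimed inequality to a lower bound on $\alpha$ purely in terms of $N$, and then to produce that bound from Proposition \ref{thmbound} together with a lower bound on the fundamental unit $\epsilon_N$. First observe that the desired conclusion $N+2\sqrt{N}\leq 4M-1$ is equivalent to $(\sqrt{N}+1)^2\leq 4M$, so since $\alpha\leq M$ it suffices to prove the sharper statement $\alpha\geq(\sqrt{N}+1)^2/4$. Because $\|\epsilon_N\|=-1$, Theorem \ref{genthm} gives $\mathfrak{D}_N=\langle\sqrt{N}\rangle$, so in the notation of Corollary \ref{corthree} we may write $\alpha=\ell_\alpha\epsilon_N^{m_\alpha}(\sqrt{N})^{\delta_{\alpha,0}}$ with $\delta_{\alpha,1}=\delta_{\alpha,2}=0$. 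I would treat the case relevant to weakly quadratic fusion categories, namely $\alpha\notin\mathbb{Q}$ (a strictly quadratic Frobenius--Perron dimension); the degenerate rational case $\alpha\in\mathbb{Z}$ carries no information about $N$ and should be excluded from the intended scope of the statement.

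Next I would show $m_\alpha\geq 1$. By Lemma \ref{greaterthenzero} we have $m_\alpha\geq 0$, so suppose $m_\alpha=0$. Then $\alpha=\ell_\alpha(\sqrt{N})^{\delta_{\alpha,0}}$; if $\delta_{\alpha,0}=0$ this is rational, contrary to assumption, while if $\delta_{\alpha,0}=1$ then $\sigma(\alpha)=-\ell_\alpha\sqrt{N}<0$, contradicting $\sigma(\alpha)\geq 1$. Hence $m_\alpha\geq 1$, and Proposition \ref{thmbound} yields $\alpha\geq\epsilon_N^{2m_\alpha}\geq\epsilon_N^{2}$ since $\epsilon_N>1$. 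It therefore remains to prove the unit estimate $\epsilon_N\geq(\sqrt{N}+1)/2$, which gives $\alpha\geq\epsilon_N^2\geq(\sqrt{N}+1)^2/4$ and closes the argument.

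The main obstacle is this last estimate, which is where the hypothesis $\|\epsilon_N\|=-1$ enters decisively. Writing $\epsilon_N=(1/2)(t_N+u_N\sqrt{N})$, the relation $\|\epsilon_N\|=-1$ forces $\sigma(\epsilon_N)=-\epsilon_N^{-1}$, so that $\epsilon_N+\epsilon_N^{-1}=\epsilon_N-\sigma(\epsilon_N)=u_N\sqrt{N}\geq\sqrt{N}$. Clearing denominators gives $\epsilon_N^2-\sqrt{N}\,\epsilon_N+1\geq 0$; since $\epsilon_N>1$ exceeds the smaller root $(\sqrt{N}-\sqrt{N-4})/2$ of $t^2-\sqrt{N}\,t+1$ whenever $N\geq 4$, the upward-opening parabola forces $\epsilon_N\geq(\sqrt{N}+\sqrt{N-4})/2\geq(\sqrt{N}+1)/2$, the last inequality holding precisely for $N\geq 5$. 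The only square-free $N\leq 4$ with $\|\epsilon_N\|=-1$ is $N=2$, for which $\epsilon_2=1+\sqrt{2}\geq(\sqrt{2}+1)/2$ is immediate; thus $\epsilon_N\geq(\sqrt{N}+1)/2$ holds in all cases and the proof concludes as above. The delicate point worth double-checking is the root comparison that converts the additive bound $\epsilon_N+\epsilon_N^{-1}\geq\sqrt{N}$ into the multiplicative bound $\epsilon_N\geq(\sqrt{N}+1)/2$, together with the isolated verification at $N=2$ where $u_N\neq 1$ and the generic argument does not apply verbatim.
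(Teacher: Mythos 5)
Your proof is correct and is essentially the paper's: Proposition \ref{thmbound} gives $\alpha\geq\epsilon_N^{2m_\alpha}$, one checks $m_\alpha\geq1$ (you make explicit the exclusion of the rational/integer case and the use of Lemma \ref{greaterthenzero}, which the paper's one-line proof leaves implicit), and the unit bound $\epsilon_N\geq(1/2)(1+\sqrt{N})$ converts $\alpha\leq M$ into $(1+\sqrt{N})^2\leq 4M$. The only divergence is how that unit bound is obtained: the paper reads it off directly from $\epsilon_N=(1/2)(t_N+u_N\sqrt{N})$ with $t_N,u_N\geq1$, a fact valid for \emph{every} fundamental unit, so --- contrary to your closing remark --- the hypothesis $\|\epsilon_N\|=-1$ is not what makes this estimate work (it is needed only for Proposition \ref{thmbound} itself); your detour through $\epsilon_N+\epsilon_N^{-1}=u_N\sqrt{N}\geq\sqrt{N}$ and the root comparison, including the separate check at $N=2$, is valid but unnecessarily involved.
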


\begin{proof}
We must have $\epsilon_N\geq(1/2)(1+\sqrt{N})$ and the result follows from Proposition \ref{thmbound}.
\end{proof}

\begin{example}
We compute $\bigcup_J(\mathfrak{D}_N^{+1}\cap\mathbb{R}_{\leq 50})$ where $J$ is the subset of square-free $N\in\mathbb{Z}_{\geq2}$ such that $\|\epsilon_N\|=-1$.  Corollary \ref{lol} implies $N=2,5,10,13,29$.  The case $N=2$ contributes $\ell\epsilon_2^2$ for $\ell=6,7,8$ and $\ell\epsilon_2\sqrt{2}$ for $\ell=2\leq\ell\leq14$.  The case $N=5$ contributes $\ell\epsilon_5^2$ for $3\leq\ell\leq19$, $\ell\epsilon_5\sqrt{5}$ for $1\leq\ell\leq13$, $\ell\epsilon_5^3\sqrt{5}$ for $2\leq\ell\leq5$,  and $7\epsilon_5^4$.  The case $N=10$ contributes $2\epsilon_{10}\sqrt{10}$.  The case $N=13$ contributes $\ell_1\epsilon_{13}\sqrt{13}$ for $\ell=1,2,3,4$.  The case $N=29$ contributes one candidate: $\epsilon_{29}\sqrt{29}$.  Moreover there are $57$ such irrational $d$-numbers in addition to the $50$ integers.
\end{example}

For a generic square-free $N\in\mathbb{Z}_{\geq2}$ Tomito and Yamamuro \cite{tomita2002} give lower bounds for $\epsilon_N$ based on the length of the continued fraction expansion of $\sqrt{N}$.    For particular square-free $N\in\mathbb{Z}_{\geq2}$, the bounds given by Proposition \ref{thmbound} are debilitating as we illustrated with the following examples.

\begin{example}\label{largeunit}
Let $N=2593$ (which is prime) \cite[Section 4 (II.b)]{yamamoto1971}.  The fundamental unit is
\begin{equation*}
\epsilon_N=229\,004\,858\,046\,909\,225\,648\,456+4\,497\,212\,789\,358\,213\,431\,953\sqrt{2593}
\end{equation*}
and $\|\epsilon_N\|=-1$.  If $\alpha\in\mathfrak{D}_N^{+1}$, Proposition \ref{thmbound} implies $\alpha\geq\epsilon^2_N>10^{48}$.  Let $N=1054721$ (which is prime).  The fundamental unit is
\begin{equation*}
\epsilon_N=653\,902\,179\,520\,607\,163\,438\,825\,746\,432+636\,713\,397\,684\,223\,825\,329\,255\,425\sqrt{1\,054\,721}
\end{equation*}
and $\|\epsilon_N\|=-1$.  If $\alpha\in\mathfrak{D}_N^{+1}$, Proposition \ref{thmbound} implies $\alpha\geq\epsilon^2_N>10^{60}$.  One can also refer to \cite{reiter1985} for infinite families of square-free $N\in\mathbb{Z}_{\geq2}$ such that $\epsilon_N$ is large (compared to $N$).
\end{example}
\end{subsection}
\end{section}


\begin{section}{Weakly quadratic fusion categories}\label{sec:app}

Here we initiate a study of weakly quadratic fusion categories (Definition \ref{deffour}) using the classification of $d$-numbers given in Theorem \ref{genthm} and Corollary \ref{discrete}.

\begin{corollary}
The set of all $\alpha\in\mathbb{R}$ such that $[\mathbb{Q}(\alpha):\mathbb{Q}]\leq2$ and $\alpha$ is the Frobenius-Perron dimension (or global dimension) of a fusion category is a discrete subset of $\mathbb{R}$.
\end{corollary}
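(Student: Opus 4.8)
The plan is to reduce the statement entirely to Theorem \ref{dis}, treating this corollary as the translation of that discreteness result into categorical language. Let $\alpha\in\mathbb{R}$ be the Frobenius--Perron dimension (or global dimension) of a fusion category $\mathcal{C}$ with $[\mathbb{Q}(\alpha):\mathbb{Q}]\leq2$. First I would invoke Lemma \ref{dfp}, which guarantees that $\alpha$ is a $d$-number and in particular an algebraic integer. Since $\alpha$ is real, the field $\mathbb{Q}(\alpha)$ is either $\mathbb{Q}$ or a \emph{real} quadratic field $\mathbb{Q}(\sqrt{N})$ with $N\in\mathbb{Z}_{\geq2}$ square-free; in either case $\alpha\in\mathcal{O}_N$ for an appropriate $N\in\mathbb{Z}_{\geq1}$, so I may write $\alpha=a+b\sqrt{N}$ with $a,b\in\frac{1}{2}\mathbb{Z}$.

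The essential extra ingredient is Lemma \ref{totally}, which supplies the positivity and maximality I need: applied to (an extension of) $\sigma$ it yields $\alpha\geq\sigma(\alpha)\geq1$. I would then read off the signs coordinate-wise. The inequality $\alpha\geq\sigma(\alpha)$ gives $2b\sqrt{N}\geq0$, hence $b\geq0$, and $a=\frac{1}{2}(\alpha+\sigma(\alpha))\geq1$ gives $a>0$. Thus $a,b\in\frac{1}{2}\mathbb{Z}_{\geq0}$, which is exactly the form of the elements enumerated in Theorem \ref{dis}. Because that theorem ranges over all $N\in\mathbb{Z}_{\geq1}$, the degenerate case $[\mathbb{Q}(\alpha):\mathbb{Q}]=1$ (where $\alpha$ is a rational $d$-number, hence a positive integer by Lemma \ref{totally} together with $\mathfrak{D}_\mathbb{Q}=\mathbb{Z}$) is subsumed by taking $N=1$ and $b=0$. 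Therefore every such $\alpha$ belongs to the single set shown discrete in Theorem \ref{dis}, and the claim follows; alternatively one may cite Corollary \ref{discrete} directly, since $\alpha\geq\sigma(\alpha)\geq1$ forces $|\sigma(\alpha)|\leq|\alpha|$.

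All the real content lives in the earlier results, so I expect no serious obstacle beyond careful bookkeeping. The one point deserving attention is verifying that the positivity and maximality furnished by Lemma \ref{totally} translate \emph{precisely} into $a,b\geq0$, so that Theorem \ref{dis} applies verbatim; in particular I must confirm that both Galois conjugates of $\alpha$ are accounted for and that neither the complex nor the rational case escapes the enumeration. Since Theorem \ref{dis} was already stated for the full range $N\in\mathbb{Z}_{\geq1}$ and $a,b\in\frac{1}{2}\mathbb{Z}_{\geq0}$, this last concern evaporates, and the set of fusion-categorical dimensions of degree at most two is exhibited as a subset of a discrete set, hence discrete.
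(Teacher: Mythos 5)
Your reduction is correct, and matches the paper's intent, for the Frobenius--Perron half of the statement: Lemma \ref{dfp} makes $\mathrm{FPdim}(\mathcal{C})$ a $d$-number, Lemma \ref{totally} gives $\mathrm{FPdim}(\mathcal{C})\geq\sigma(\mathrm{FPdim}(\mathcal{C}))\geq1$, hence $a,b\geq0$, and Theorem \ref{dis} (equivalently Corollary \ref{discrete}) finishes. The paper gives no written proof beyond this implicit reduction, together with the remark that the statement also follows from a more general external fact, \cite[Corollary 3.13]{paul}: there are only finitely many fusion categories with $\mathrm{FPdim}(\mathcal{C})\leq M$, hence only finitely many realized Frobenius--Perron dimensions below any bound.

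The genuine gap is the parenthetical ``(or global dimension)'', which your argument does not cover. Lemma \ref{totally} is a statement about $\mathrm{FPdim}$ only---its proof rests on Perron--Frobenius maximality---and its analogue for $\dim(\mathcal{C})$ is false. As the Note following Definition \ref{deffour} explains, the Galois conjugates of $\dim(\mathcal{C})$ are themselves the global dimensions of the Galois-conjugate categories; for instance the Galois conjugate of the pseudounitary rank-two category $G_{2,1}$ is a fusion category of global dimension $(5-\sqrt{5})/2$, which is strictly smaller than its conjugate $(5+\sqrt{5})/2$. So for $\alpha=\dim(\mathcal{C})$ your inequality $\alpha\geq\sigma(\alpha)$ can fail, the coordinate $b$ can be negative, and membership in the set of Theorem \ref{dis} is lost. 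Nor can you retreat to total positivity, which does hold (the paper records that $\dim(\mathcal{C})-1$ is totally positive): the set of quadratic $d$-numbers all of whose conjugates are at least $1$ is \emph{not} discrete, since for any $N$ with $\|\epsilon_N\|=1$ the $d$-numbers $\ell\epsilon_N^{-k}$ with $\ell\in\mathbb{Z}$, $\epsilon_N^{k}<\ell<2\epsilon_N^{k}$, have both conjugates greater than $1$ and become dense in $[1,2]$ as $k\to\infty$. Hence the global-dimension half cannot be obtained from the discreteness theorem by Galois bookkeeping alone; it requires an input about which $d$-numbers are actually \emph{realized} by categories, which is the role of the finiteness theorem cited from \cite{paul} (and even there some care is needed, since the cited result bounds categories by $\mathrm{FPdim}$, and the inequality $\dim(\mathcal{C})\leq\mathrm{FPdim}(\mathcal{C})$ runs in the unhelpful direction). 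Your proof as written establishes the corollary only for Frobenius--Perron dimensions.
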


This has not been mentioned explicitly in the literature because it follows from a more general fact.  Corollary 3.13 of \cite{paul} states that for a fixed $M\in\mathbb{R}_{\geq1}$ there are finitely-many fusion categories $\mathcal{C}$ with $\mathrm{FPdim}(\mathcal{C})\leq M$, hence finitely-many possible $d$-numbers $\alpha\leq M$ which are realized as the Frobenius-Perron dimension of a fusion category.  Our major contribution is that for weakly-quadratic fusion categories, we have \emph{constructed} all such possible numbers.


\begin{subsection}{Families of weakly quadratic fusion categories}\label{fam}

A majority of known weakly quadratic fusion categories are weakly integral.  Strictly quadratic fusion categories are encountered less often.  Aside from producing new examples via basic constructions such as products and Drinfeld centers, only finitely-many strictly quadratic fusion categories have been proven to exist.  There are several conjecturally-infinite families of strictly quadratic fusion categories which have been put forward, with finitely-many proofs of existence.  This section is largely expository aside from Proposition \ref{prop1} and its corollary, placing known examples into the framework of Theorem \ref{genthm}.
\label{sec:fam}
\begin{example}\label{quantum}
The largest collection of fusion categories which do not arise from the representation theory of finite groups are semisimple quotients of the representation categories of quantum groups at roots of unity.  To each rank $n\in\mathbb{Z}_{\geq1}$ complex finite-dimensional simple Lie algebra of Dynkin type $X_n$, and \emph{level} $k\in\mathbb{Z}_{\geq1}$ one associates a modular tensor category which we abbreviate $X_{n,k}$.  One can refer to \cite{schopierayprimer} for a technical outline of these examples and further references.   Weakly quadratic categories $X_{n,k}$ are entirely described and the only fields appearing are $\mathbb{Q}(\sqrt{N})$ for  $N=2,3,5,6,21$.  We include all strictly quadratic $X_{n,k}$ here with the Frobenius-Perron dimensions factored as in Corollary \ref{corthree}.
\begin{figure}[H]
\centering
\begin{equation*}
\begin{array}{|c|c|c|c|}
\hline X & n & k & \mathrm{FPdim}(X_{n,k})\\\hline
A 	& 1 & 3 & 2\epsilon_5\sqrt{5} \\
 	& 1 & 6 & 8\epsilon_2\sqrt{2}  \\
 	& 1 & 8 & 20\epsilon_5^2 \\
 	& 1 & 10 & 24\epsilon_3 \\[0.25cm]
 	& 2 & 2 & 3\epsilon_5\sqrt{5} \\
 	& 2 & 5 & 42\epsilon_2^2 \\
 	& 2 & 7 & 60\epsilon_5^5\sqrt{5} \\
 	& 2 & 9 & 432\epsilon_3^2 \\[0.25cm]
 	& 3 & 4 & 128\epsilon_2^2 \\
 	& 3 & 6 & 800\epsilon_5^6 \\
 	& 3 & 8 & 3456\epsilon_3^3\\[0.25cm]\hline
\end{array}
\,\,
\begin{array}{|c|c|c|c|}
\hline X & n & k & \mathrm{FPdim}(X_{n,k})\\\hline
A   & 4 & 3 & 80\epsilon_2^2 \\
 	& 4 & 5 & 2000\epsilon_2^6 \\
 	& 4 & 7 & 8640\epsilon_3^4 \\[0.25cm]
 	& 5 & 2 & 24\epsilon_2\sqrt{2} \\
 	& 5 & 4 & 1200\epsilon_5^6 \\
 	& 5 & 6 & 20736\epsilon_3^4 \\[0.25cm]
 	& 6 & 3 & 140\epsilon_5^5\sqrt{5} \\ 	
 	& 6 & 5 &  12096\epsilon_3^4 \\[0.25cm] 	
 	& 7 & 2 & 80\epsilon_5^2 \\ 	
 	& 7 & 4 &  6912\epsilon_3^3 \\
 	& & & \\\hline
\end{array}
\,\,
\begin{array}{|c|c|c|c|}
\hline X & n & k & \mathrm{FPdim}(X_{n,k})\\\hline
A 	& 8 & 3 & 1296\epsilon_3^2 \\[0.25cm]
 	& 9 & 2 & 120\epsilon_3 \\[0.25cm]
E 	& 6 & 3 & 45\epsilon_5^3\sqrt{5} \\[0.25cm]
 	& 7 & 2 & 3\epsilon_5\sqrt{5} \\
 	& 7 & 3 & 42\epsilon_{21} \\[0.25cm] 
F 	& 4 & 1 &  \epsilon_5\sqrt{5} \\
	& 4 & 3 & 48\epsilon_6 \\[0.25cm]
G 	& 2 & 1 & \epsilon_5\sqrt{5} \\
	& 2 & 3 & 21\epsilon_{21} \\
	& & & \\\hline	
\end{array}
\end{equation*}
    \caption{Strictly quadratic $X_{n,k}$}%
    \label{figlie}%
\end{figure}
\end{example}
The following conjecturally-infinite family of examples originated in the work of Masaki Izumi but appeared first in print by Siehler in \cite{siehler2003near}.  Later they were studied (with their Drinfeld centers) through the lens of operator algebras by Evans and Gannon in \cite{evans2014near}.  A very detailed and thorough exposition on this approach was given by Izumi in \cite{izumi2017cuntz}.
\begin{definition}
A \emph{near group fusion category} is a fusion category with exactly one noninvertible simple object, up to isomorphism.
\end{definition}
One can easily index such categories.  The set of invertible simple objects forms a finite group $G$.  Let $\rho$ be any noninvertible simple object.  For some $k\in\mathbb{Z}_{\geq0}$ we then have
\begin{equation}\label{sixty}
\rho^2=\sum_{g\in G}g+k\rho
\end{equation}
in the Grothendieck ring.   As such we say a near-group fusion category is of type $(G,k)$, generically denoted $\mathcal{N}(G,k)$.

\begin{example}\label{tambara}
The categories $\mathcal{N}(G,0)$ are referred to as a \emph{Tambara-Yamagami} fusion categories and one immediately sees these categories are weakly integral, with $\mathrm{FPdim}(\mathcal{N}(G,0))=2|G|$.
\end{example} 

It was shown in \cite[Theorem A.6]{ost15} that when $\mathrm{FPdim}(\rho)$ is irrational, then $k$ is a multiple of $|G|$, and so henceforth $k=n|G|$ for some $n\in\mathbb{Z}_{\geq1}$.  From (\ref{sixty}), $\mathrm{FPdim}(\rho)$ is the largest root of the quadratic polynomial $x^2-n|G|x-|G|$ which is a $d$-number by Lemma \ref{def} (v) and one verifies
\begin{equation}
\mathrm{FPdim}(\mathcal{N}(G,n|G|))^2=(n^2|G|^2+4|G|)\mathrm{FPdim}(\rho)^2.
\end{equation}
As a $d$-number, $\mathrm{FPdim}(\rho)^2$ is an integer multiple of a unit and as $\|\mathrm{FPdim}(\rho)^2\|=|G|^2$, then necessarily $\mathrm{FPdim}(\rho)^2=|G|u$ for some unit $u$.  In particular,
\begin{equation}
\dfrac{1}{|G|}\mathrm{FPdim}(\rho)^2=\dfrac{1}{2}\left(n^2|G|+2+n\sqrt{n^2|G|^2+4|G|}\right)
\end{equation}
is a unit.  If the existence of near-group categories is prolific, they represent examples of strictly quadratic fusion categories over a broad set of fields.  A conjecturally-infinite family of strictly quadratic fusion categories which are not near-group arose as a generalization of the fusion categories constructed via the Haagerup subfactor \cite{asaeda1999exotic}.  As with the hypothetical near-group examples these, often called Haagerup-Izumi categories, have been studied via operator algebras \cite{haagerupizumi}.  The reader can reference \cite{izumi2016classification} for details and finite-many explicit constructions.

\begin{definition}
Let $G$ be a finite group.  A fusion category of \emph{Haagerup-Izumi} type $G$, denoted $\mathcal{HI}_G$, is a rank $2|G|$ fusion category with $|G|$ invertible objects indexed by elements of $G$ and their fusion given by group multiplication, and $|G|$ non-invertible simple objects $\{g\rho:g\in G\}$ whose fusion is given in the Grothendieck ring by
\begin{equation}
g(h\rho)=(gh)\rho=(h\rho)g^{-1},\text{ and }(g\rho)(h\rho)=gh^{-1}+\sum_{a\in G}a\rho.
\end{equation}
\end{definition}
The noninvertible objects have the same Frobenius-Perron dimension subject to
\begin{equation}
\mathrm{FPdim}(\rho)^2-|G|\mathrm{FPdim}(\rho)-1=0.
\end{equation}
Note $\mathrm{FPdim}(\rho)=(1/2)(|G|+\sqrt{|G|^2+4})$ is a unit of norm $-1$ in $\mathbb{Q}(\sqrt{|G|^2+4})$.  Thus we have
\begin{equation}
\mathrm{FPdim}(\mathcal{HI}_G)=|G|(1+\mathrm{FPdim}(\rho)^2)=|G|\mathrm{FPdim}(\rho)\sqrt{|G|^2+4}.
\end{equation}
The following example shows that in general, $\mathcal{HI}_G$ is not the unique fusion category $\mathcal{C}$ with $\mathrm{FPdim}(\mathcal{C})=\mathrm{FPdim}(\mathcal{HI}_G)$.
\begin{example}
Let $|G|=11$ so that $\mathrm{FPdim}(\mathcal{HI}_G)=11\epsilon_5\sqrt{11^2+4}=55\epsilon_5\sqrt{5}$.  Thus
\begin{equation}
\mathrm{FPdim}(G_{2,1}\boxtimes\mathrm{Vec}_{\mathbb{Z}/5\mathbb{Z}}\boxtimes\mathrm{Vec}_{\mathbb{Z}/11\mathbb{Z}})=55\epsilon_5\sqrt{5}.
\end{equation}
These fusion categories are not equivalent because they differ in rank.
\end{example}
\begin{definition}
A fusion category $\mathcal{C}$ is called a \emph{generalized near-group fusion category} if $\mathcal{O}(\mathcal{C}_\mathrm{pt})$ acts (by left $\otimes$) transitively on $\mathcal{O}(\mathcal{C})\backslash\mathcal{O}(\mathcal{C}_\mathrm{pt})$.
\end{definition}
When there exists precisely one isomorphism class of non-invertible objects, generalized near-group categories devolve into near-group categories.  Let $G$ be the set of isomorphism classes of invertible objects in a generalized near-group fusion category $\mathcal{C}$ and $G_\rho$ be the stabilizer of $\rho\in\mathcal{O}(\mathcal{C})$.  We will abuse notation by referring to cosets in $G/G_\rho$ by a representative element in said class.  With these conventions, it was shown in \cite[Proposition IV.2.2]{thornton2012generalized} that each noninvertible $\rho\in\mathcal{O}(\mathcal{C})$ satisfies
\begin{equation}
\rho^2=\sum_{g\in G_\rho}g+\sum_{h\in G/G_\rho}k_h(h\rho)
\end{equation}
in the Grothendieck ring of $\mathcal{C}$ where $k_h\in\mathbb{Z}_{\geq0}$ for all $h\in G/G_\rho$.  Furthermore, all noninvertible $\rho\in\mathcal{O}(\mathcal{C})$ have identical decompositions, from which the generalized near-group category $\mathcal{C}$ is determined by the collection $(G,G_\rho,\{k_h:h\in G/G_\rho)$ for any $\rho\in\mathcal{O}(\mathcal{C})$.  This implies
\begin{equation}
\mathrm{FPdim}(\rho)^2-K\mathrm{FPdim}(\rho)-|G_\rho|=0
\end{equation}
where $K:=\sum_{h\in G/G_\rho}k_h$.  One may then compute
\begin{equation}\label{eqfooorty}
\mathrm{FPdim}(\mathcal{C})=[G:G_\rho](\mathrm{FPdim}(\rho)^2+|G_\rho|)=[G:G_\rho]\mathrm{FPdim}(\rho)\sqrt{K^2+4|G_\rho|}.
\end{equation}
\begin{proposition}\label{prop1}
If $\mathcal{C}$ is a generalized near-group category of type $(G,G_\rho,\{k_h:h\in G/G_\rho)$, then $\mathrm{FPdim}(\rho)$ is a $d$-number.
\end{proposition}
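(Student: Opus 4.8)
The plan is to verify the $d$-number criterion of Lemma \ref{def}(iii) directly for $d := \mathrm{FPdim}(\rho)$: I must show $d/\tau(d) \in \mathbb{A}^\times$ for every $\tau \in \mathrm{Gal}(\overline{\mathbb{Q}}/\mathbb{Q})$. Since $d$ is the largest root of $x^2 - Kx - |G_\rho|$, it lies in a field of degree at most $2$ over $\mathbb{Q}$. If $d \in \mathbb{Q}$ then $d$ is a rational algebraic integer, hence an ordinary integer, which is automatically a $d$-number; so I may assume $d$ is irrational with genuine Galois conjugate $d' = \sigma(d) = K - d$. The product-of-roots relation gives $dd' = -|G_\rho|$, so that
\[
\frac{d}{\sigma(d)} = \frac{d^2}{d d'} = -\frac{d^2}{|G_\rho|}.
\]
Thus the whole statement reduces to proving that $d^2/|G_\rho|$ is an algebraic unit.

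First I would prove integrality of $d^2/|G_\rho|$ via the pointed subcategory. The invertible objects span a fusion subcategory $\mathcal{C}_{\mathrm{pt}}$ with $\mathrm{FPdim}(\mathcal{C}_{\mathrm{pt}}) = |G|$, so Lemma \ref{subcategory} shows $\mathrm{FPdim}(\mathcal{C})/|G|$ is an algebraic integer. Substituting $[G:G_\rho] = |G|/|G_\rho|$ into the formula (\ref{eqfooorty}) yields
\[
\frac{\mathrm{FPdim}(\mathcal{C})}{|G|} = \frac{d^2 + |G_\rho|}{|G_\rho|} = \frac{d^2}{|G_\rho|} + 1,
\]
so $d^2/|G_\rho|$ is an algebraic integer. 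Next I would upgrade integrality to invertibility: applying $\sigma$ and using $(d d')^2 = |G_\rho|^2$, the Galois conjugate of $d^2/|G_\rho|$ equals $d'^2/|G_\rho| = |G_\rho|/d^2$, which is again an algebraic integer as the conjugate of one. Hence $d^2/|G_\rho|$ and its inverse are both algebraic integers, so $d^2/|G_\rho| \in \mathbb{A}^\times$. It follows that $d/\tau(d) \in \{1, -d^2/|G_\rho|\} \subset \mathbb{A}^\times$ for all $\tau$, which is exactly Lemma \ref{def}(iii).

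The only real content lies in the integrality step, and the main obstacle will be recognizing that the correct divisor is $\mathrm{FPdim}(\mathcal{C}_{\mathrm{pt}}) = |G|$ rather than $[G:G_\rho]$ or $|G_\rho|$: it is the interplay between Lemma \ref{subcategory} and the norm identity $dd' = -|G_\rho|$ that produces the divisibility $|G_\rho| \mid d^2$ driving the argument. As an alternative one could instead invoke Lemma \ref{def}(vi) with the polynomial $x^2 - Kx - |G_\rho|$: integrality of $d^2/|G_\rho| = Kd/|G_\rho| + 1$ makes $Kd/|G_\rho|$ integral, and taking its trace over $\mathbb{Q}(d)$ gives $K^2/|G_\rho| \in \mathbb{Z}$, i.e.\ $|G_\rho| \mid K^2$, which is precisely the divisibility condition (vi) requires.
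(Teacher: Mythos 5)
Your proof is correct, but it follows a genuinely different route from the paper's. The paper's own proof is two lines: from (\ref{eqfooorty}) one gets $\mathrm{FPdim}(\mathcal{C})^2/\mathrm{FPdim}(\rho)^2=[G:G_\rho]^2(K^2+4|G_\rho|)\in\mathbb{Z}$, and since $\mathrm{FPdim}(\mathcal{C})$ is a $d$-number by Lemma \ref{dfp}, the transfer properties of $d$-numbers from \cite[Corollaries 2.8 and 2.9]{codegrees} (an algebraic integer that is an integral quotient of a $d$-number is a $d$-number, and so is an algebraic integer whose square is one) hand the property to $\mathrm{FPdim}(\rho)$. You instead verify criterion (iii) of Lemma \ref{def} from scratch: you obtain integrality of $d^2/|G_\rho|$ by applying Lemma \ref{subcategory} to the pointed subcategory $\mathcal{C}_{\mathrm{pt}}$ (dividing by $|G|$ rather than comparing with $\mathrm{FPdim}(\rho)^2$), and you obtain invertibility from the purely Galois-theoretic identity $\sigma(d^2/|G_\rho|)=\sigma(d)^2/|G_\rho|=|G_\rho|/d^2$, which follows from $d\,\sigma(d)=-|G_\rho|$; both steps are sound, as is your reduction to the two possible values $d/\tau(d)\in\{1,-d^2/|G_\rho|\}$. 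The trade-off: the paper's argument is shorter but leans on the nontrivial categorical theorem that $\mathrm{FPdim}(\mathcal{C})$ is a $d$-number plus external corollaries of \cite{codegrees}, whereas yours is self-contained, needing only the subcategory divisibility of \cite{ENO} and quadratic field arithmetic. As a bonus, your closing trace computation $K^2/|G_\rho|\in\mathbb{Z}$ already establishes Corollary \ref{corgen}, which the paper must derive separately afterwards by combining Proposition \ref{prop1} with Lemma \ref{lemma} (iv).
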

\begin{proof}
By Equation (\ref{eqfooorty}), $\mathrm{FPdim}(\mathcal{C})^2/\mathrm{FPdim}(\rho)^2=[G:G_\rho]^2(K^2+4|G_\rho|)\in\mathbb{Z}$.  The result then follows from Lemma \ref{dfp} and Corollaries 2.8 and 2.9 of \cite{codegrees}.
\end{proof}
\begin{corollary}\label{corgen}
If $\mathcal{C}$ is a generalized near-group category of type $(G,G_\rho,\{k_h:h\in G/G_\rho)$ and $K:=\sum_{h\in G/G_\rho}k_h$, then $|G_\rho|$ divides $K^2$.
\end{corollary}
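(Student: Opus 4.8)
The plan is to read the divisibility directly off the $d$-number criterion applied to the defining quadratic of $\mathrm{FPdim}(\rho)$. By Proposition \ref{prop1}, $\mathrm{FPdim}(\rho)$ is a $d$-number, and the generalized near-group fusion relation $\mathrm{FPdim}(\rho)^2 - K\mathrm{FPdim}(\rho) - |G_\rho| = 0$ exhibits it as a root of the monic integer polynomial $p(x) = x^2 - Kx - |G_\rho|$. First I would observe that in the strictly quadratic regime of interest, where $\mathrm{FPdim}(\rho)$ is irrational, this $p(x)$ is precisely the minimal polynomial of $\mathrm{FPdim}(\rho)$ over $\mathbb{Q}$.

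Next I would invoke Lemma \ref{def}(v) with $n = 2$, $a_1 = -K$, and $a_2 = -|G_\rho|$. The criterion requires $(a_i)^2$ to be divisible by $(a_2)^i$ for $i = 1, 2$; the case $i = 2$ is vacuous, while the case $i = 1$ states exactly that $(a_1)^2 = K^2$ is divisible by $a_2 = -|G_\rho|$, that is, $|G_\rho| \mid K^2$. This single implication is the entire content of the corollary, so no further computation is needed. As a consistency check staying within the paper's principal tool, I would also note that writing $\mathrm{FPdim}(\rho) = a + b\sqrt{N}$ with $a = K/2$ gives $4a^2 = K^2$ and $a^2 - Nb^2 = \|\mathrm{FPdim}(\rho)\| = -|G_\rho|$ (the latter being the product of the two roots of $p$); Lemma \ref{lemma}(iv) then yields $4a^2/(a^2 - Nb^2) = -K^2/|G_\rho| \in \mathbb{Z}$, recovering $|G_\rho| \mid K^2$ by an independent route.

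The main point requiring care is not any hard calculation but the reduction to the degree-two situation: the argument delivers the conclusion exactly when $p(x)$ is the minimal polynomial of $\mathrm{FPdim}(\rho)$. I would therefore first confirm that $p$ is irreducible over $\mathbb{Q}$ — equivalently that $K^2 + 4|G_\rho|$ is not a perfect square — which is precisely the strictly quadratic hypothesis under which the minimal-polynomial form of Lemma \ref{def}(v) is applicable. In the degenerate case where $\mathrm{FPdim}(\rho)$ is an integer the minimal polynomial drops to degree one and the argument no longer applies, so the essential task is to verify that this edge case either does not arise or is excluded by the standing assumptions before appealing to Lemma \ref{def}(v).
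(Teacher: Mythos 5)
Your proposal is correct and is essentially the paper's own argument: the paper proves the corollary by combining Proposition \ref{prop1} with Lemma \ref{lemma}(iv), computing $4(K/2)^2/\|\mathrm{FPdim}(\rho)\|=-K^2/|G_\rho|\in\mathbb{Z}$, which is precisely your ``consistency check,'' and your primary route through Lemma \ref{def}(v) applied to $x^2-Kx-|G_\rho|$ is the same computation in an equivalent guise.

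One remark on the edge case you flag: your caution is warranted, and the paper does not address it. The paper's proof presupposes exactly what yours does, namely that $x^2-Kx-|G_\rho|$ is the \emph{minimal} polynomial of $\mathrm{FPdim}(\rho)$ (it takes $\|\mathrm{FPdim}(\rho)\|=-|G_\rho|$, which fails when $\mathrm{FPdim}(\rho)\in\mathbb{Z}$), and the statement is in fact false without an irrationality hypothesis: $\mathrm{Rep}(S_3)$ is a generalized near-group category with $G=G_\rho\cong\mathbb{Z}/2\mathbb{Z}$ and $K=1$, where $2\nmid 1$; similarly $\mathrm{Rep}(A_4)$ gives $|G_\rho|=3$, $K=2$, and $3\nmid 4$. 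So the degenerate case does arise and must be excluded: the corollary should be read with the standing assumption that $\mathrm{FPdim}(\rho)$ is irrational (equivalently, $K^2+4|G_\rho|$ is not a perfect square), under which both your argument and the paper's are complete. Your write-up, by isolating this hypothesis explicitly, is if anything more careful than the paper's.
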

\begin{proof}
Proposition \ref{prop1} shows $\mathrm{FPdim}(\rho)=(1/2)(K+\sqrt{K^2+4|G_\rho|})$ is a $d$-numer.  Thus
\begin{equation}
\dfrac{4(K/2)^2}{\|\mathrm{FPdim}(\rho)\|}=\dfrac{K^2}{-|G_\rho|}\in\mathbb{Z}
\end{equation}
by Lemma \ref{lemma} (iv).
\end{proof}
\end{subsection}


\begin{subsection}{Non-existence results}\label{sec:non}

It is well-known that each positive integer $n$ is realized as the Frobenius-Perron dimension of the fusion category $\mathrm{Vec}_G$ for any finite group $G$ with $|G|=n$.  For quadratic extensions, $\mathfrak{D}^{+1}$ is a discrete subset of $\mathbb{R}$, but unlike the rational case there exist elements of $\mathfrak{D}^{+1}$ which cannot be realized as the Frobenius-Perron dimension of fusion categories.  The smallest such example is $3+\sqrt{3}$ from Example \ref{threeplus}.  Let $X$ be a simple object in a fusion category $\mathcal{C}$ such that $\mathrm{FPdim}(\mathcal{C})=3+\sqrt{3}$.  Then
\begin{equation}
\mathrm{FPdim}(X)\leq\sqrt{\mathrm{FPdim}(\mathcal{C})-1}=\sqrt{2+\sqrt{3}}=2\cos(\pi/12)<2.
\end{equation}
It is known by the work of Kronecker \cite[Corollary 3.3.16]{tcat} that if $\mathrm{FPdim}(X)<2$, then $\mathrm{FPdim}(X)=2\cos(\pi/n)$ for some $n\geq3$ and thus in our case, $\mathrm{FPdim}(X)=2\cos(\pi/n)$ for $n=3,\ldots,12$.  A finite computation then ensures no such fusion category $\mathcal{C}$ exists.
\begin{note}
Although no fusion category exists of Frobenius-Perron dimension $3+\sqrt{3}$, there exists a fusion category $\mathcal{C}$ with $\mathrm{FPdim}(\mathcal{C})=4(3+\sqrt{3})$.  One has $\mathrm{FPdim}(A_{1,10})=24\epsilon_3$ (Figure \ref{figlie}) and there exists a commutative algebra $A\in A_{1,10}$ of Frobenius-Perron dimension $3+\sqrt{3}$, whose category of modules has Frobenius-Perron dimension $24\epsilon_3/(3+\sqrt{3})=4(3+\sqrt{3})$ \cite[Lemma 3.11]{DMNO}.
\end{note}
Clearly this structure of argument cannot be extended to produce an infinite list of elements of $\mathfrak{D}^{+1}$ which cannot be realized as the Frobenius-Perron dimension of a fusion category.  In this section we prove a key result, Proposition \ref{selfadjoint}, which aids in the proof that no pseudounitary fusion category exists of the form $p\epsilon_N$ where $p\in\mathbb{Z}_{\geq2}$ is prime and $N\in\mathbb{Z}_{\geq2}$ is square-free (Proposition \ref{doesnotexist}).  The following definition is a specialized case of the notion of a \emph{quantum integer}.

\begin{definition}
Let $N\in\mathbb{Z}_{\geq2}$ be square-free.  For $m\in\mathbb{Z}$ we define $[m]_{N}:=(\epsilon_N^m-\epsilon_N^{-m})/(\epsilon_N-\epsilon_N^{-1})$.
\end{definition}
\begin{lemma}\label{thmlemma}
Let $N\in\mathbb{Z}_{\geq2}$ be square-free.  For any $m\in\mathbb{Z}$, $[m]_{N}\in\mathcal{O}_N$.  Furthermore, if $\|\epsilon_N\|=1$ or $\|\epsilon_N\|=-1$ and $m$ is odd, $[m]_N\in\mathbb{Z}$ and if $\|\epsilon_N\|=-1$ and $m$ is even, $[m]\in\mathbb{Z}\cdot\sqrt{N}$.
\end{lemma}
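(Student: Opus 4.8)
The plan is to recognize $[m]_N$ as the solution of a three-term linear recurrence whose coefficient is an algebraic integer, and then track the parity of $m$ against the algebraic form of that coefficient.

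First I would observe that $\epsilon_N$ and $\epsilon_N^{-1}$ are the two roots of $x^2 - s x + 1$ where $s := \epsilon_N + \epsilon_N^{-1}$; hence each of $\epsilon_N^m$ and $\epsilon_N^{-m}$ satisfies $y_{m+1} = s\,y_m - y_{m-1}$. Taking the difference and dividing by the nonzero constant $\epsilon_N - \epsilon_N^{-1}$ (nonzero since $\epsilon_N > 1$) shows that $[m]_N$ obeys
\begin{equation*}
[m+1]_N = s\,[m]_N - [m-1]_N, \qquad [0]_N = 0,\quad [1]_N = 1.
\end{equation*}
Because the defining formula is antisymmetric, $[-m]_N = -[m]_N$, so it suffices to treat $m \geq 0$ and parity is preserved under $m \mapsto -m$.

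Next I would compute $s$ in each norm case using $\sigma(\epsilon_N) = \|\epsilon_N\|\,\epsilon_N^{-1}$, which follows from $\|\epsilon_N\| = \epsilon_N\sigma(\epsilon_N) = \pm1$. When $\|\epsilon_N\| = 1$ we have $\epsilon_N^{-1} = \sigma(\epsilon_N)$, so $s = \epsilon_N + \sigma(\epsilon_N) = t_N \in \mathbb{Z}$. When $\|\epsilon_N\| = -1$ we have $\epsilon_N^{-1} = -\sigma(\epsilon_N)$, so $s = \epsilon_N - \sigma(\epsilon_N) = u_N\sqrt{N} \in \mathbb{Z}\cdot\sqrt{N}$, using the decomposition $\epsilon_N = (1/2)(t_N + u_N\sqrt{N})$ from Section \ref{sec:quad}. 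In either case $s \in \mathcal{O}_N$, so the recurrence together with integer initial data immediately yields $[m]_N \in \mathcal{O}_N$ for all $m$, establishing the first claim.

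Finally I would read off the refined statement by induction. If $s = t_N \in \mathbb{Z}$, the recurrence with $[0]_N, [1]_N \in \mathbb{Z}$ keeps every $[m]_N$ in $\mathbb{Z}$. If $s = u_N\sqrt{N}$, I would prove the dichotomy by induction on $m$: the base cases $[0]_N = 0 \in \mathbb{Z}\cdot\sqrt{N}$ (even index) and $[1]_N = 1 \in \mathbb{Z}$ (odd index) match, and in the inductive step multiplication of a member of $\mathbb{Z}$ (resp.\ $\mathbb{Z}\cdot\sqrt{N}$) by $u_N\sqrt{N}$ lands in $\mathbb{Z}\cdot\sqrt{N}$ (resp.\ $\mathbb{Z}$, since $\sqrt{N}\cdot\sqrt{N} = N$), while the subtracted term $[m-1]_N$ already carries the correct type by hypothesis; hence $[m+1]_N$ has the type opposite to $[m]_N$, matching the parity of $m+1$. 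There is essentially no deep obstacle here: the only points requiring care are the sign bookkeeping when passing from $\sigma(\epsilon_N) = \|\epsilon_N\|\epsilon_N^{-1}$ to the value of $s$, and the remark that dividing by $\epsilon_N - \epsilon_N^{-1}$ preserves the recurrence. The parity statement then amounts to the fact that $\mathbb{Z} \oplus \mathbb{Z}\sqrt{N}$ is $\mathbb{Z}/2\mathbb{Z}$-graded with $\sqrt{N}$ as the odd generator, so that multiplication by $u_N\sqrt{N}$ acts as a degree shift.
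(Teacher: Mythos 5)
Your proof is correct, but it takes a genuinely different route from the paper. The paper proves membership in $\mathcal{O}_N$ by expanding the quotient as an explicit sum of powers of the unit, $[m]_N=\epsilon_N^{-(m-1)}+\epsilon_N^{-(m-3)}+\cdots+\epsilon_N^{m-1}$, and then gets the refined statement Galois-theoretically: a direct computation gives $\sigma([m]_N)=\|\epsilon_N\|^{m-1}[m]_N$, so $[m]_N$ is either $\sigma$-invariant (hence a rational algebraic integer, i.e.\ in $\mathbb{Z}$) or $\sigma$-anti-invariant (hence in $\mathbb{Z}\cdot\sqrt{N}$). You instead observe that $[m]_N$ satisfies the three-term recurrence $[m+1]_N=s\,[m]_N-[m-1]_N$ with $s=\epsilon_N+\epsilon_N^{-1}$, compute $s=t_N$ or $s=u_N\sqrt{N}$ according to the sign of the norm, and run an induction with the $\mathbb{Z}/2\mathbb{Z}$-grading of $\mathbb{Z}\oplus\mathbb{Z}\sqrt{N}$. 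The paper's computation is shorter and produces the identity $\sigma([m]_N)=\|\epsilon_N\|^{m-1}[m]_N$, which is of independent use; on the other hand, its final step quietly uses that a $\sigma$-anti-invariant element of $\mathcal{O}_N$ lies in $\mathbb{Z}\cdot\sqrt{N}$ (a small check when $N\equiv1\pmod 4$, where half-integer coordinates are allowed), whereas your induction lands in $\mathbb{Z}$ or $\mathbb{Z}\cdot\sqrt{N}$ constructively and never needs that observation. Your recurrence is also the standard one for quantum integers and gives an efficient way to compute the $[m]_N$ appearing in Proposition \ref{selfadjoint} and the lemmas following it.
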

\begin{proof}
The first statement follows from the long division
\begin{equation}
\dfrac{\epsilon_N^m-\epsilon_N^{-m}}{\epsilon_N-\epsilon_N^{-1}}=\epsilon_N^{-m-1}+\epsilon_N^{-m+1}+\cdots+\epsilon_N^{m-3}+\epsilon_N^{m-1}
\end{equation}
and the fact that $\mathcal{O}_N$ is a ring.  Now compute
\begin{equation}
\sigma([m]_N)=\dfrac{(\|\epsilon_N\|\epsilon_N)^{-m}-(\|\epsilon_N\|\epsilon_N)^m}{(\|\epsilon_N\|\epsilon)^{-1}_N-(\|\epsilon_N\|\epsilon_N)}=\dfrac{-\|\epsilon_N\|^m(\epsilon_N^{m}-\epsilon_N^{-m})}{-\|\epsilon_N\|(\epsilon_N-\epsilon_N^{-1})}=\|\epsilon_N\|^{m-1}[m]_N.
\end{equation}
Hence if $\|\epsilon_N\|=1$, $[m]_N$ is an algebraic integer in $\mathbb{Q}$, hence $[m]_N\in\mathbb{Z}$.  Furthermore if $\|\epsilon_N\|=-1$, then $[m]_N\in\mathbb{Z}$ when $m$ is odd, and $[m]_N\in\mathbb{Z}\cdot\sqrt{N}$ when $m$ is even.
\end{proof}

Recall the set of simple objects $X\in\mathcal{C}$ such that $\mathrm{FPdim}(X)^2\in\mathbb{Z}$ generates a fusion subcategory $\mathcal{C}_\mathrm{int}\subset\mathcal{C}$ \cite[Lemma 3.5.6]{tcat}.

\begin{proposition}\label{selfadjoint}
Let $N\in\mathbb{Z}_{\geq2}$ be square-free and $\mathcal{C}$ be a fusion category such that $\mathrm{FPdim}(\mathcal{C})=\ell\epsilon_N^m$ for some $\ell,m\in\mathbb{Z}_{\geq1}$.  If $\mathrm{FPdim}(X)\in\mathfrak{D}_N$ for all simple $X\in\mathcal{C}$, there exists a sequence of nonnegative integers $\{\ell_j\}_{j=1}^\infty$ (finitely-many nonzero) such that
\begin{equation}\label{thmeq}
[m]_N\mathrm{FPdim}(\mathcal{C}_\mathrm{int})=\sum_{j=1}^\infty\ell_j[j-m]_N.
\end{equation}
\end{proposition}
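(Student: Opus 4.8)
The plan is to expand $\mathrm{FPdim}(\mathcal{C})=\sum_{X\in\mathcal{O}(\mathcal{C})}\mathrm{FPdim}(X)^2$ as a $\mathbb{Z}$-linear combination of powers of $\epsilon_N$, and then extract the stated identity by applying $\sigma$ and dividing by $\epsilon_N-\epsilon_N^{-1}$, exactly in the spirit of the computation appearing in the proof of Lemma \ref{thmlemma}. First I would note that each $\mathrm{FPdim}(X)\in\mathfrak{D}_N$ is positive, so Corollary \ref{corthree} lets me write $\mathrm{FPdim}(X)=\ell_X\epsilon_N^{m_X}(\sqrt{N})^{\delta_0}(\sqrt{\kappa_1\epsilon_N})^{\delta_1}(\sqrt{\kappa_2\epsilon_N})^{\delta_2}$ with $\ell_X\in\mathbb{Z}_{\geq1}$, whence $\mathrm{FPdim}(X)^2=c_X\epsilon_N^{e(X)}$ where $c_X:=\ell_X^2N^{\delta_0}\kappa_1^{\delta_1}\kappa_2^{\delta_2}\in\mathbb{Z}_{\geq1}$ and $e(X):=2m_X+\delta_1+\delta_2\in\mathbb{Z}$. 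Since $\epsilon_N^k$ is irrational for $k\neq0$, one has $\mathrm{FPdim}(X)^2\in\mathbb{Z}$ exactly when $e(X)=0$, so the simple objects of $\mathcal{C}_\mathrm{int}$ are precisely those with $e(X)=0$, giving $\mathrm{FPdim}(\mathcal{C}_\mathrm{int})=\sum_{e(X)=0}c_X$.

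The key step is to show $e(X)\geq0$ for every simple $X$, and moreover that $e(X)$ is even when $\|\epsilon_N\|=-1$. For the parity, $\mathrm{FPdim}(X)^2$ is totally positive (it and its conjugate are squares of real algebraic numbers), while $\sigma(\mathrm{FPdim}(X)^2)=c_X\|\epsilon_N\|^{e(X)}\epsilon_N^{-e(X)}$; positivity forces $\|\epsilon_N\|^{e(X)}>0$, i.e.\ $e(X)$ even in the norm $-1$ case. For the inequality I would use that $\mathrm{FPdim}(X)$ is the Frobenius-Perron eigenvalue of $M_X$, hence dominates the absolute value of its Galois conjugate $\sigma(\mathrm{FPdim}(X))$, itself an eigenvalue of the integer matrix $M_X$; squaring gives $c_X\epsilon_N^{e(X)}\geq c_X\epsilon_N^{-e(X)}$ (the sign disappears because $e(X)$ is even in the norm $-1$ case), and since $\epsilon_N>1$ this yields $e(X)\geq0$.

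With this in hand, group the simple objects by exponent and set $S_k:=\sum_{X:\,e(X)=k}c_X\in\mathbb{Z}_{\geq0}$, so that $S_0=\mathrm{FPdim}(\mathcal{C}_\mathrm{int})$, only finitely many $S_k$ are nonzero, $S_k=0$ for $k<0$, and $\ell\epsilon_N^m=\mathrm{FPdim}(\mathcal{C})=\sum_{k\geq0}S_k\epsilon_N^k$. Dividing by $\epsilon_N^m$ gives $\ell=\sum_k S_k\epsilon_N^{k-m}$; applying $\sigma$ and using $\sigma(\epsilon_N^{k-m})=\epsilon_N^{-(k-m)}$ (valid since $k-m$ is even in the norm $-1$ case, as $m$ is also even by total positivity of $\mathrm{FPdim}(\mathcal{C})$) yields $\ell=\sum_k S_k\epsilon_N^{-(k-m)}$. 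Subtracting these and dividing by $\epsilon_N-\epsilon_N^{-1}$ produces $\sum_k S_k[k-m]_N=0$. Separating the $k=0$ term and using $[-m]_N=-[m]_N$ rearranges this into $[m]_N\mathrm{FPdim}(\mathcal{C}_\mathrm{int})=\sum_{k\geq1}S_k[k-m]_N$, which is the claim with $\ell_j:=S_j$ (nonnegative integers, finitely many nonzero).

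The main obstacle is the positivity and parity analysis of $e(X)$ in the second step: establishing $e(X)\geq0$ requires the Frobenius-Perron maximality of $\mathrm{FPdim}(X)$ among the conjugate eigenvalues of $M_X$, rather than the merely category-level bound of Lemma \ref{totally}, and the separate verification that $e(X)$ and $m$ are even when $\|\epsilon_N\|=-1$ is precisely what makes the Galois step collapse cleanly rather than introduce stray signs. Everything after that is the formal manipulation already carried out in Lemma \ref{thmlemma}.
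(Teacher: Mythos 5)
Your proof is correct and follows essentially the same route as the paper's: decompose $\mathrm{FPdim}(\mathcal{C})$ as a nonnegative-integer combination of powers of $\epsilon_N$ grouped by exponent, exploit the Galois invariance of $\ell=\epsilon_N^{-m}\mathrm{FPdim}(\mathcal{C})$, and collapse the signs via the parity analysis in the $\|\epsilon_N\|=-1$ case. The only notable difference is bookkeeping: you justify $e(X)\geq 0$ explicitly via Frobenius--Perron maximality of $\mathrm{FPdim}(X)$ among the eigenvalues of the integer matrix $M_X$ (a point the paper leaves implicit when invoking Theorem \ref{genthm}), and you subtract and divide by $\epsilon_N-\epsilon_N^{-1}$ where the paper rearranges into a quotient, which amounts to the same computation.
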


\begin{proof}
As $\mathrm{FPdim}(X)\in\mathfrak{D}_N$ for all simple $X\in\mathcal{C}$,  Theorem \ref{genthm} implies
\begin{equation}
\mathrm{FPdim}(X)^2\in\{\ell_X\epsilon_N^{m_X}:\ell_X\in\mathbb{Z}_{\geq1}\text{ and }m_X\in\mathbb{Z}_{\geq0}\}.
\end{equation}
So we may decompose
\begin{equation}
\mathrm{FPdim}(\mathcal{C})=\ell\epsilon_N^m=\sum_{j=1}^\infty\ell_j\epsilon_N^j+\mathrm{FPdim}(\mathcal{C}_\mathrm{int})
\end{equation}
where $\ell_j=\sum_{m_X=j}\ell_X$.  Multiplying both sides by $\epsilon_N^{-m}$ yields
\begin{equation}
\ell=\sum_{j=1}^\infty\ell_j\epsilon_N^{j-m}+\mathrm{FPdim}(\mathcal{C}_\mathrm{int})\epsilon_N^{-m}
\end{equation}
and so the right-hand side is invariant under $\sigma\in\mathrm{Gal}(\mathbb{Q}(\sqrt{N})/\mathbb{Q})$.  Hence
\begin{align}
&&\sum_{j=1}^\infty\ell_j\epsilon_N^{j-m}+\mathrm{FPdim}(\mathcal{C}_\mathrm{int})\epsilon_N^{-m}&=\sum_{j=1}^\infty\ell_j(\|\epsilon_N\|\epsilon_N)^{m-j}+\mathrm{FPdim}(\mathcal{C}_\mathrm{int})(\|\epsilon_N\|\epsilon_N)^m \\
\Rightarrow&&\sum_{j=1}^\infty\ell_j\dfrac{\epsilon_N^{j-m}-(\|\epsilon_N\|\epsilon_N)^{m-j}}{(\|\epsilon_N\|\epsilon_N)^m-\epsilon_N^{-m}}&=\mathrm{FPdim}(\mathcal{C}_\mathrm{int})
\end{align}
which is well-defined as $m\neq0$.  If $\|\epsilon_N\|=-1$, then $m$ is even because $\mathrm{FPdim}(\mathcal{C})$ is totally positive, and for all $\ell_j\neq0$, $j$ is even as well, or else $\ell_\alpha\epsilon_N^j$ is not totally positive, even though it is a sum of totally real squares.  So whether $\|\epsilon_N\|=\pm1$ our result is proven.
\end{proof}

To see the breadth of Proposition \ref{selfadjoint}, recall the \emph{adjoint subcategory} $\mathcal{C}_\mathrm{ad}$ of a fusion category $\mathcal{C}$ \cite[Definition 4.14.5]{tcat}, generated by $X\otimes X^\ast$ for all $X\in\mathcal{O}(\mathcal{C})$.  In general, $\mathrm{FPdim}(X)^2\in\mathbb{Q}(\mathrm{FPdim}(\mathcal{C}))$ for all $X\in\mathcal{O}(\mathcal{C})$, hence if $\mathrm{FPdim}(\mathcal{C})\in\mathfrak{D}_N$, then $\mathrm{FPdim}(X)\in\mathbb{Q}(\sqrt{N})$ for all $X\in\mathcal{O}(\mathcal{C}_\mathrm{ad})$.  And moreover if $\mathcal{C}$ is any pseudounitary fusion category $\mathcal{C}$, $\mathcal{Z}(\mathcal{C})_\mathrm{ad}$ automatically satisfies the hypotheses of Proposition \ref{selfadjoint}.

\begin{note}
We emphasize that (\ref{thmeq}) is an equality of \emph{integers}.  Certainly this is true by Lemma \ref{thmlemma} if $\|\epsilon_N\|=1$, but the final comments of the proof of Proposition \ref{selfadjoint} imply the left and right-hand sides of Equation (\ref{thmeq}) are multiples of $\sqrt{N}$ if $\|\epsilon_N\|=-1$.  Dividing by $\sqrt{N}$ yields an equality of integers.
\end{note}

\begin{example}\label{geee2}
Consider a fusion category $\mathcal{C}$ satisfying the hypotheses of Proposition \ref{selfadjoint} with Frobenius-Perron dimension $21\epsilon_{21}$.  We claim there are exactly two lists of Frobenius-Perron dimensions of simple objects such that $\mathrm{FPdim}(\mathcal{C})=21\epsilon_{21}$ (hence in each case the rank is determined).  Proposition \ref{selfadjoint} implies that there exists a sequence of nonnegative integers $\{\ell_j\}_{j=1}^\infty$ such that $\mathrm{FPdim}(\mathcal{C}_\mathrm{int})=\sum_{j=1}^\infty\ell_j[j-1]_{21}$.  We have $[j-1]_{21}=0,1,5,24,\ldots$ for $j=1,2,3,4\ldots$, so as $\mathrm{FPdim}(\mathcal{C}_\mathrm{int})$ divides $21$, we need only find potential $\ell_2,\ell_3$ and $\ell_1$ will be determined from them.  But $\epsilon_{21}^3>21\epsilon_{21}$ and $5\epsilon_{21}^2>21\epsilon_{21}$, hence $\ell_3=0$, $\ell_2\leq4$, and $\ell_1\leq21$.  Therefore, there are $336$ potential equalities of the form $\mathrm{FPdim}(\mathcal{C}_\mathrm{int})+\ell_1\epsilon_{21}+\ell_2\epsilon_{21}^2=21\epsilon_{21}$ to check.  Two solutions are possible.  One solution is $\mathrm{FPdim}(\mathcal{C}_\mathrm{int})=3$, $\ell_1=6$ and $\ell_2=3$.  In this case $\mathcal{C}_\mathrm{int}$ is pointed of rank 3.  There must be $3$ objects with Frobenius-Perron dimension $\epsilon_{21}$ (as $\sqrt{2\epsilon_{21}}$ and $\sqrt{3\epsilon_{21}}$ are not $d$-numbers), and $2$ objects of dimension $\sqrt{3\epsilon_{21}}$ (as $\sqrt{\ell\epsilon_{21}}$ for $\ell=1,2,4,5,6$).  The second solution has trivial $\mathcal{C}_\mathrm{int}$.  By the same reasoning as in the first case, there must be 3 objects of dimension $\sqrt{3\epsilon_{21}}$, one object of dimension $\epsilon_{21}$, and one object of dimension $\sqrt{7\epsilon_{21}}$.  The second case is realized by the quantum group category $G_{2,3}$ (see Example \ref{quantum}) while the first is realized by a category Morita equivalent to $G_{2,3}$.
\end{example}

\begin{note}Example \ref{geee2} illustrates an unusually strong constraint on weakly quadratic fusion categories $\mathcal{C}$ with Frobenius-Perron dimension $\ell\epsilon_N$ when for all $X\in\mathcal{O}(\mathcal{C})$, $\mathrm{FPdim}(X)^2$ is an integer multiple of $1$, $\epsilon_N$ or $\epsilon_N^2$.  Proposition \ref{selfadjoint} implies there exist $\ell_1,\ell_2$ such that $\mathrm{FPdim}(\mathcal{C}_\mathrm{int})+\ell_1\epsilon_N+\ell_2\epsilon_N^2=\ell\epsilon_N$.  Hence
\begin{equation}\label{fiftythree}
\mathrm{FPdim}(\mathcal{C}_\mathrm{int})+(\ell_1-\ell)\epsilon_N+\ell_2\epsilon_N^2=0.
\end{equation}
As the minimal polynomial of $\epsilon_N$ is $x^2-t_Nx+1$, Equation \ref{fiftythree} is determined entirely by $\mathrm{FPdim}(\mathcal{C}_\mathrm{int})$, which implies $\ell_2=\mathrm{FPdim}(\mathcal{C}_\mathrm{int})$ and $\ell_1=\ell-t_N\mathrm{FPdim}(\mathcal{C}_\mathrm{int})$.  This argument by minimal polynomial of $\epsilon_N$ is vital in the proof of the lemmas leading to Proposition \ref{doesnotexist}.
\end{note}

The remainder of this exposition will use Proposition \ref{selfadjoint} to prove Proposition \ref{doesnotexist} which states prime integer multiples of fundamental units don't arise as Frobenius-Perron dimensions of pseudounitary fusion categories.  Our technique is inductive.  In particular Theorem \ref{genthm} implies that one can inductively study quadratic $d$-numbers via the exponent of the fundamental unit.  It is not clear if all weakly quadratic fusion categories are Galois conjugate to a pseudounitary category, though this has been proven for generalized near-group categories \cite{thornton2012generalized}.  As such we may not use the fact that $\mathrm{FPdim}(\mathcal{C})$ is divisible by $\mathrm{FPdim}(X)^2$ for all $X\in\mathcal{O}(\mathcal{C})$ without assuming pseudounitarity.  Furthermore, a braiding is required to conclude that $\mathrm{FPdim}(X)$ is a $d$-number for all simple $X\in\mathcal{O}(\mathcal{C})$ when $\mathcal{C}$ is pseudounitary.  Therefore to prove our results for generic fusion categories we must pass through the Drinfeld center $\mathcal{Z}(\mathcal{C})$.  Lastly, we frequently use two basic constructions of fusion categories: factorization \cite[Theorem 4.2]{mug1} and de-equivariantization \cite[Theorem 8.23.3]{tcat} which in our case, on the level of Frobenius-Perron dimensions, correspond to factorization and division of $d$-numbers.

\begin{lemma}\label{previouszero}
Let $N,\kappa\in\mathbb{Z}_{\geq2}$ be square-free.  If $\mathcal{C}$ is a weakly quadratic braided fusion category with $\mathrm{FPdim}(\mathcal{C})=\sqrt{\kappa\epsilon_N}$, then $\mathcal{C}$ is modular and has no proper nontrivial fusion subcategories.
\end{lemma}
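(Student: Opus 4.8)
The plan is to first pin down the arithmetic of $\mathrm{FPdim}(\mathcal{C})=\sqrt{\kappa\epsilon_N}$, then rule out fusion subcategories by a divisibility argument, and finally extract non-degeneracy of the braiding from the absence of subcategories. Since $\mathrm{FPdim}(\mathcal{C})$ is a $d$-number (Lemma \ref{dfp}) lying in a quadratic field, Theorem \ref{genthm} forces $\|\epsilon_N\|=1$ and $\kappa\in\{\kappa_1,\kappa_2\}$: indeed $\sqrt{\kappa\epsilon_N}\in\mathbb{Q}(\sqrt{N})$ requires the biquadratic of Lemma \ref{above} to split, which happens only when $\|\epsilon_N\|=1$. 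A short computation gives $\|\sqrt{\kappa\epsilon_N}\|=\pm\kappa$, and Lemma \ref{totally} ($\sigma(\mathrm{FPdim}(\mathcal{C}))\geq1$, so the product $\mathrm{FPdim}(\mathcal{C})\sigma(\mathrm{FPdim}(\mathcal{C}))$ is positive) selects the sign $+\kappa$; thus the norm is the \emph{square-free} integer $\kappa$, and in the coordinates of Corollary \ref{corthree} one has $\ell_{\mathrm{FPdim}(\mathcal{C})}=1$. Because $\mathbb{Q}(\mathrm{FPdim}(\mathcal{C}))=\mathbb{Q}(\sqrt{N})$ and $\mathrm{FPdim}(X)^2\in\mathbb{Q}(\mathrm{FPdim}(\mathcal{C}))$ for every simple $X$, each fusion subcategory $\mathcal{D}\subseteq\mathcal{C}$ satisfies $\mathrm{FPdim}(\mathcal{D})=\sum_{X\in\mathcal{O}(\mathcal{D})}\mathrm{FPdim}(X)^2\in\mathbb{Q}(\sqrt{N})$, hence $\mathrm{FPdim}(\mathcal{D})\in\mathfrak{D}_N^{+1}$ by Lemma \ref{totally}.

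Next I would show $\mathcal{C}$ has no proper nontrivial fusion subcategory. Fix $\mathcal{D}\subseteq\mathcal{C}$. Lemma \ref{subcategory} gives $\mathrm{FPdim}(\mathcal{D})\mid\mathrm{FPdim}(\mathcal{C})$ in $\mathcal{O}_N$, so Proposition \ref{lemdiv} yields $\ell_{\mathrm{FPdim}(\mathcal{D})}\mid\ell_{\mathrm{FPdim}(\mathcal{C})}=1$. Total positivity forces the $\sqrt{N}$-exponent to vanish (otherwise $\|\mathrm{FPdim}(\mathcal{D})\|=-N^{}\kappa_1^{\delta_1}\kappa_2^{\delta_2}<0$), so $\|\mathrm{FPdim}(\mathcal{D})\|=\kappa_1^{\delta_1}\kappa_2^{\delta_2}$ divides the square-free $\kappa$; since $\kappa_1,\kappa_2$ are distinct square-free integers with neither dividing the other in the surviving cases (the configuration $N\kappa_i=\kappa_j$ is excluded precisely because it would force $\sigma(\mathrm{FPdim}(\mathcal{C}))<0$, contradicting Lemma \ref{totally}), only $\mathrm{FPdim}(\mathcal{D})=\epsilon_N^m$ or $\mathrm{FPdim}(\mathcal{D})=\epsilon_N^m\sqrt{\kappa\epsilon_N}$ remain. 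In the first case a nontrivial $\mathcal{D}$ would require $m\geq1$, but then $\sigma(\mathrm{FPdim}(\mathcal{D}))=\epsilon_N^{-m}<1$ contradicts total positivity of $\mathrm{FPdim}(\mathcal{D})-1$; so $\mathcal{D}$ is trivial. In the second case the quotient $\mathrm{FPdim}(\mathcal{C})/\mathrm{FPdim}(\mathcal{D})=\epsilon_N^{-m}\geq1$ forces $m\leq0$, while Lemma \ref{greaterthenzero} gives $m\geq0$; hence $m=0$ and $\mathcal{D}=\mathcal{C}$.

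With this in hand, non-degeneracy of the braiding is immediate. The category $\mathcal{C}$ is nontrivial since $\mathrm{FPdim}(\mathcal{C})=\sqrt{\kappa\epsilon_N}>1$, and its symmetric center $\mathcal{C}'$ is a fusion subcategory, hence (by the previous paragraph) trivial or all of $\mathcal{C}$. If $\mathcal{C}'=\mathcal{C}$ then $\mathcal{C}$ is symmetric, so by Deligne's theorem it is (super-)Tannakian and $\mathrm{FPdim}(\mathcal{C})=|G|\in\mathbb{Z}$, contradicting the irrationality of $\sqrt{\kappa\epsilon_N}$. Therefore $\mathcal{C}'\cong\mathrm{Vec}$ and the braiding is non-degenerate.

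The remaining, and genuinely delicate, point is sphericity, which is part of the definition of a modular tensor category; I expect this to be the main obstacle, since the existence of a spherical structure is open for arbitrary fusion categories. Here I would leverage that $\mathcal{C}$ is now \emph{non-degenerately} braided and use the canonical spherical (ribbon) structure that such categories carry, upgrading the non-degenerate braided fusion category to a modular tensor category. In the intended applications $\mathcal{C}$ occurs inside a Drinfeld center $\mathcal{Z}(\mathcal{D})$ of a pseudounitary (hence spherical) category, so $\mathcal{C}$ is automatically spherical and the issue is moot; the care is needed only to justify the fully general braided statement, and it is this sphericity step — rather than the arithmetic — that carries the real content.
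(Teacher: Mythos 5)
Your overall strategy --- pin down the arithmetic, exclude subcategories by divisibility plus positivity, then read off non-degeneracy of the braiding from the absence of subcategories --- is genuinely different from the paper's proof, which establishes modularity \emph{first} (Deligne's theorem makes $\mathcal{C}'$ integral, and a perfect-square norm cannot divide the square-free $\kappa$) and then excludes proper subcategories using M\"uger's factorization $\mathcal{C}\simeq\mathcal{D}\boxtimes\mathcal{E}$. Your route would be more elementary, but your second paragraph contains a genuine error. You claim that total positivity forces the $\sqrt{N}$-exponent of $\mathrm{FPdim}(\mathcal{D})$ to vanish, and that the configuration $N\kappa_i=\kappa_j$ is excluded because it would force $\sigma(\mathrm{FPdim}(\mathcal{C}))<0$. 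A single example breaks both claims: take $N=11$, so $\epsilon_{11}=10+3\sqrt{11}$, $\kappa_1=22$, $\kappa_2=2$, and $N\kappa_2=\kappa_1$. With $\kappa=\kappa_1=22$ one has $\mathrm{FPdim}(\mathcal{C})=\sqrt{22\epsilon_{11}}=11+3\sqrt{11}$, whose conjugate is $11-3\sqrt{11}\approx1.05\geq1$, so this configuration is \emph{not} excluded by Lemma \ref{totally}. Moreover $\sqrt{22\epsilon_{11}}=\sqrt{11}\,\bigl(3+\sqrt{11}\bigr)=\sqrt{11}\sqrt{\kappa_2\epsilon_{11}}$, so in the canonical coordinates of Corollary \ref{corthree} (where the absent generator $\sqrt{\kappa_1\epsilon_N}$ carries exponent zero, cf.\ $\mathfrak{D}_{11}=\langle\sqrt{11},3+\sqrt{11}\rangle$ in Example \ref{ex1}) the dimension $\mathrm{FPdim}(\mathcal{C})$ itself has $\delta_0=1$; products involving $\sqrt{N}$ can have positive norm because $\|\sqrt{\kappa_i\epsilon_N}\|$ can be negative (here $\|3+\sqrt{11}\|=-2$). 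Consequently your case analysis never rules out a subcategory $\mathcal{D}$ with $\|\mathrm{FPdim}(\mathcal{D})\|=\kappa_2=2$, which does divide $\kappa=22$.

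The gap is repairable with tools you already set up. Since $\sigma(\sqrt{\kappa_1\epsilon_N})\sigma(\sqrt{\kappa_2\epsilon_N})=\gcd(\kappa_1,\kappa_2)\epsilon_N^{-1}\sigma(\sqrt{N})<0$, exactly one of $\sqrt{\kappa_1\epsilon_N},\sqrt{\kappa_2\epsilon_N}$ has positive conjugate; the hypothesis $\sigma(\mathrm{FPdim}(\mathcal{C}))\geq1$ forces $\kappa$ to be that one, and then for $\kappa_i\neq\kappa$ every $\epsilon_N^m\sqrt{\kappa_i\epsilon_N}$ has negative conjugate and fails Lemma \ref{totally} applied to $\mathcal{D}$ --- this, not the parenthetical about $\sigma(\mathrm{FPdim}(\mathcal{C}))$, is what eliminates the divisor $\kappa_i$ of $\kappa$ in the configuration $\kappa=N\kappa_i$. (The paper instead dispatches this case through M\"uger factorization: $\mathcal{D}$ is modular, $\mathcal{C}\simeq\mathcal{D}\boxtimes\mathcal{E}$ with $\mathrm{FPdim}(\mathcal{E})=\epsilon_N^{-m}\sqrt{\kappa/\ell}$, and $\kappa/\ell=N$ is impossible because $\sqrt{N}\notin\mathfrak{D}_N^{+1}$.) Separately, your closing claim that a non-degenerately braided fusion category carries a canonical spherical (ribbon) structure is not a theorem --- as you yourself note, existence of spherical structures is open; the paper's proof makes the same silent identification of ``non-degenerate'' with ``modular,'' so this is a shared gap rather than one you introduced, but your proposed repair of it is not valid.
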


\begin{proof}
The symmetric center of $\mathcal{C}$ is necessarily integral (as it is Tannakian/super-Tannakian by Deligne's Theorem), hence $\|\mathrm{FPdim}(\mathcal{C}')\|$ is a perfect square.  Because $\|\mathrm{FPdim}(\mathcal{C})\|=\kappa$ is square-free, then $\mathrm{FPdim}(\mathcal{C}')$ cannot divide $\mathrm{FPdim}(\mathcal{C})$ unless $\mathcal{C}'$ is trivial and thus $\mathcal{C}$ is modular.  Similarly, \emph{any} nontrivial proper fusion subcategory $\mathcal{D}\subset\mathcal{C}$ must have $\mathrm{FPdim}(\mathcal{D})\in\mathfrak{D}^{+1}_N$ with square-free norm.  But by Theorem \ref{genthm}, up to units and products, there are at most 3 elements of $\mathfrak{D}_N$ with square-free norm: $\sqrt{N}$, $\sqrt{\kappa_1\epsilon_N}$, and $\sqrt{\kappa_2\epsilon_N}$.  As $\sqrt{N}\not\in\mathfrak{D}^{+1}_N$, we must have $\mathrm{FPdim}(\mathcal{D})=\epsilon_N^m\sqrt{\ell\epsilon_N}$ for some  $\ell\in\mathbb{Z}_{\geq1}$ and $m\in\mathbb{Z}_{\geq0}$.  But by the same reasoning as before, $\mathcal{D}$ is modular, and $\mathcal{C}\simeq\mathcal{D}\boxtimes\mathcal{E}$ for some fusion subcategory $\mathcal{E}\subset\mathcal{C}$ with $\mathrm{FPdim}(\mathcal{E})=\epsilon_N^{-m}\sqrt{\kappa\epsilon_N}/\sqrt{\ell\epsilon_N}=\epsilon_N^{-m}\sqrt{\kappa/\ell}$.  But from $\mathrm{FPdim}(\mathcal{E})\in\mathfrak{D}^{+1}_N$ we may conclude $m=0$, and $\kappa/\ell=1$ or $\kappa/\ell=N$.  The latter cannot happen because $\sqrt{N}\not\in\mathfrak{D}_N^{+1}$ and the former implies $\mathcal{E}$ is trivial.  Thus $\mathcal{C}$ has no proper nontrivial fusion subcategories.
\end{proof}

\begin{lemma}\label{previous}
Let $N\in\mathbb{Z}_{\geq2}$ be square-free, and $p\in\mathbb{Z}_{\geq2}$ prime.  No pseudounitary modular tensor category $\mathcal{C}$ exists with $\mathrm{FPdim}(\mathcal{C})=p\epsilon_N$.
\end{lemma}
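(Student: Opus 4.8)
The plan is to handle the case $\|\epsilon_N\|=-1$ for free, and then to reduce the harder case $\|\epsilon_N\|=1$ to an equality of integers via Proposition \ref{selfadjoint} and the quantum-integer identity $\epsilon_N^{j}=[j]_N\epsilon_N-[j-1]_N$. First I would observe that $\mathrm{FPdim}(\mathcal{C})=p\epsilon_N$ must be totally positive, so by Lemma \ref{totally} one has $\sigma(p\epsilon_N)\geq 1$; but when $\|\epsilon_N\|=-1$ we have $\sigma(\epsilon_N)=-\epsilon_N^{-1}<0$, whence $\sigma(p\epsilon_N)<0$, a contradiction (this already rules out any fusion category, not just modular ones). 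Thus I may assume $\|\epsilon_N\|=1$, so that the minimal polynomial of $\epsilon_N$ is $x^2-t_Nx+1$ with $t_N\geq 3$, and $[j]_N\in\mathbb{Z}$ with $[0]_N=0<[1]_N<[2]_N=t_N<\cdots$ strictly increasing by Lemma \ref{thmlemma}.

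Next I would record the arithmetic constraints. Since $\mathcal{C}$ is modular it is braided, and being pseudounitary it carries its canonical spherical structure with $\dim_a=\mathrm{FPdim}$; hence Lemma \ref{dfp} gives $\mathrm{FPdim}(X)\in\mathfrak{D}_N$ for every simple $X$, and Corollary \ref{corthree} writes $\mathrm{FPdim}(X)^2=\ell_X\epsilon_N^{m_X}$ with $\ell_X\in\mathbb{Z}_{\geq 1}$ and $m_X\geq 0$. Pseudounitary modularity further yields $\mathrm{FPdim}(\mathcal{C})/\mathrm{FPdim}(X)^2=(p/\ell_X)\epsilon_N^{1-m_X}\in\mathbb{A}$ (Lemma \ref{modular}); as $\epsilon_N^{1-m_X}$ is a unit, this forces $p/\ell_X\in\mathcal{O}_N\cap\mathbb{Q}=\mathbb{Z}$, i.e. $\ell_X\in\{1,p\}$ since $p$ is prime. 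Likewise $\mathrm{FPdim}(\mathcal{C}_{\mathrm{int}})\in\mathbb{Z}_{\geq 1}$ divides $p$ by Lemma \ref{subcategory}, so $\mathrm{FPdim}(\mathcal{C}_{\mathrm{int}})\in\{1,p\}$.

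Now I would apply Proposition \ref{selfadjoint} with $m=1$ (so $[1]_N=1$): writing $\ell_j=\sum_{m_X=j}\ell_X\geq 0$ one gets $\sum_{j\geq 1}\ell_j\epsilon_N^{j}=p\epsilon_N-\mathrm{FPdim}(\mathcal{C}_{\mathrm{int}})$. Substituting $\epsilon_N^{j}=[j]_N\epsilon_N-[j-1]_N$ and comparing coefficients in the $\mathbb{Q}$-basis $\{1,\epsilon_N\}$ turns this into the pair $\sum_j\ell_j[j]_N=p$ and $\sum_j\ell_j[j-1]_N=\mathrm{FPdim}(\mathcal{C}_{\mathrm{int}})$. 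In the case $\mathrm{FPdim}(\mathcal{C}_{\mathrm{int}})=p$, subtracting gives $\sum_j\ell_j([j]_N-[j-1]_N)=0$; since the differences are strictly positive and $\ell_j\geq 0$, every $\ell_j=0$, forcing $\mathrm{FPdim}(\mathcal{C})=p\neq p\epsilon_N$, a contradiction.

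It remains to eliminate $\mathrm{FPdim}(\mathcal{C}_{\mathrm{int}})=1$, which I expect to be the main obstacle, as the dimension identity alone does not close it. Here $\sum_j\ell_j[j-1]_N=1$ together with $[j-1]_N\geq t_N\geq 3$ for $j\geq 3$ forces $\ell_2=1$ and $\ell_j=0$ for $j\geq 3$, and then $\sum_j\ell_j[j]_N=p$ gives $\ell_1=p-t_N$. The object counted by $\ell_2=1$ has $\mathrm{FPdim}=\epsilon_N$, while any object counted by $\ell_1$ has $\mathrm{FPdim}=\sqrt{\ell_X\epsilon_N}\in\mathfrak{D}_N$ with $\ell_X\in\{1,p\}$; as $\sqrt{\epsilon_N}\notin\mathbb{Q}(\sqrt N)$ this forces $\ell_X=p$ (and $p\in\{\kappa_1,\kappa_2\}$ by Theorem \ref{genthm}), so $\ell_1$ is a nonnegative multiple of $p$. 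Combined with $\ell_1=p-t_N$ and $3\leq t_N\leq p$, the only possibility is $t_N=p$ and $\ell_1=0$, leaving exactly two simple objects $\mathbbm{1},Y$ with $\mathrm{FPdim}(Y)=\epsilon_N$. Then $Y^{\ast}\cong Y$ and $Y\otimes Y=\mathbbm{1}\oplus aY$ give $\epsilon_N^{2}=1+a\epsilon_N$; equating with $\epsilon_N^{2}=t_N\epsilon_N-1$ yields $(t_N-a)\epsilon_N=2$, impossible for irrational $\epsilon_N$. The crux is thus to interlace the quantum-integer bookkeeping with the divisibility $\ell_X\mid p$ coming from pseudounitary modularity in order to corner this rank-two case, which is then killed by a minimal-polynomial computation that essentially uses $\|\epsilon_N\|=1$.
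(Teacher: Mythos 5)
The central gap is your very first categorical step: the claim that Lemma \ref{dfp} gives $\mathrm{FPdim}(X)\in\mathfrak{D}_N$ for every simple $X$. Lemma \ref{dfp} only gives that $\mathrm{FPdim}(X)$ is a \emph{$d$-number}; membership in $\mathfrak{D}_N$ additionally requires $\mathrm{FPdim}(X)\in\mathbb{Q}(\sqrt{N})$, and nothing in your argument establishes this field membership. The two are genuinely different: the Ising category is a pseudounitary modular tensor category with $\mathrm{FPdim}(\mathcal{C})=4\in\mathbb{Q}$ yet has a simple object of dimension $\sqrt{2}\notin\mathbb{Q}$. In your setting, simple objects of dimension $\sqrt{\epsilon_N}$, or $(p\epsilon_N)^{1/4}$, or $p^{1/4}$ when $N=p$, are all $d$-numbers (a power of each lies in $\mathbb{Z}\cdot\mathbb{A}^\times$) compatible with the divisibility $\mathrm{FPdim}(\mathcal{C})/\mathrm{FPdim}(X)^2\in\mathbb{A}$, but they lie in quartic or octic extensions of $\mathbb{Q}$. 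Such objects break your proof twice: first, Proposition \ref{selfadjoint} is not applicable, since its hypothesis is exactly ``$\mathrm{FPdim}(X)\in\mathfrak{D}_N$ for all simple $X$'' (dimensions like $p^{1/4}$ contribute $\mathrm{FPdim}(X)^2=\sqrt{p}$, which is not of the form $\ell_X\epsilon_N^{m_X}$, so the coefficient comparison in the basis $\{1,\epsilon_N\}$ collapses); second, your later step ``as $\sqrt{\epsilon_N}\notin\mathbb{Q}(\sqrt{N})$ this forces $\ell_X=p$'' silently invokes the same unproved membership, since an object with $\mathrm{FPdim}(X)^2=\epsilon_N$ is only impossible if you already know $\mathrm{FPdim}(X)\in\mathbb{Q}(\sqrt{N})$.

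Establishing this hypothesis is precisely what most of the paper's proof is devoted to, and it is categorical rather than arithmetic: the paper rules out $\mathrm{FPdim}(\mathcal{C}_\mathrm{int})=p$ (pointed of rank $p$, then factorization or de-equivariantization would produce a category of dimension $\epsilon_N\notin\mathfrak{D}_N^{+1}$), shows any proper nontrivial fusion subcategory has dimension $\sqrt{p\epsilon_N}$, factors $\mathcal{C}\simeq\mathcal{D}\boxtimes\mathcal{E}$ using Lemma \ref{previouszero}, and concludes $\mathcal{C}=\mathcal{C}_\mathrm{ad}$; only then does the observation that simple objects of $\mathcal{C}_\mathrm{ad}$ have $\mathrm{FPdim}(X)\in\mathbb{Q}(\mathrm{FPdim}(\mathcal{C}))$ license Proposition \ref{selfadjoint}. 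Your arithmetic downstream of the gap is correct and in places cleaner than the paper's: the pair of integer equations $\sum_j\ell_j[j]_N=p$ and $\sum_j\ell_j[j-1]_N=\mathrm{FPdim}(\mathcal{C}_\mathrm{int})$ disposes of the case $\mathrm{FPdim}(\mathcal{C}_\mathrm{int})=p$ without the paper's appeal to pointedness and de-equivariantization, and your rank-two endgame via $(t_N-a)\epsilon_N=2$ avoids citing the rank-2 classification. But as written the proof does not go through; you would need to either reproduce the paper's subcategory/factorization analysis forcing $\mathcal{C}=\mathcal{C}_\mathrm{ad}$, or supply some other argument confining all simple dimensions to $\mathbb{Q}(\sqrt{N})$.
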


\begin{proof}
By Lemma \ref{subcategory}, $\mathrm{FPdim}(\mathcal{C})/\mathrm{FPdim}(\mathcal{C}_\mathrm{int})$ is an algebraic integer.  Proposition \ref{lemdiv} then implies $\mathrm{FPdim}(\mathcal{C}_\mathrm{int})=1,p$.  If $\mathrm{FPdim}(\mathcal{C}_\mathrm{int})=p$, then by \cite[Corollary 8.30]{ENO}, $\mathcal{C}_\mathrm{int}$ is pointed of rank $p$.  The symmetric center $\mathcal{C}_\mathrm{int}'$ is either trivial or $\mathcal{C}$ is Tannakian.  In either case one may construct a fusion category of Frobenius-Perron dimension $\epsilon_N$ (either by factoring \cite[Theorem 4.2]{mug1} or de-equivariantization \cite[Example 3.8]{DMNO}), which does not exist because $\epsilon_N\not\in\mathfrak{D}_N^{+1}$.  So we must have $\mathrm{FPdim}(\mathcal{C}_\mathrm{int})=1$.  Furthermore if $\mathcal{D}\subset\mathcal{C}$ is any proper nontrivial fusion subcategory, $\|\mathrm{FPdim}(\mathcal{D})\|=1,p,p^2$.  By Theorem \ref{genthm}, if $\|\mathrm{FPdim}(\mathcal{D})\|=p^2$, then $\mathrm{FPdim}(\mathcal{D})=p\epsilon_N^m$ for some $m\in\mathbb{Z}_{\geq0}$.  But $m\neq0$ as $\mathcal{D}$ is not weakly integral, and $m$ is not a positive integer as $\mathcal{D}$ is proper.  So $
\|\mathrm{FPdim}(\mathcal{D})\|=p$ which as in the proof of Lemma \ref{previouszero} implies $\mathrm{FPdim}(\mathcal{D})=\sqrt{p\epsilon_N}$.  Both $\mathcal{D}$ and $\mathcal{E}$ have no nontrivial proper fusion subcategories, so Lemma \ref{previouszero} then implies $\mathcal{D}$ is modular and $\mathcal{C}\simeq\mathcal{D}\boxtimes\mathcal{E}$ where $\mathcal{E}\subset\mathcal{C}$ is another fusion subcategory with $\mathrm{FPdim}(\mathcal{E})=\sqrt{p\epsilon_N}$.  Moreover $\mathcal{C}_\mathrm{ad}\simeq\mathcal{D}_\mathrm{ad}\boxtimes\mathcal{E}_\mathrm{ad}=\mathcal{D}\boxtimes\mathcal{E}$ (see the proof of Proposition 2.2 \cite{DMNO}) and $\mathcal{C}=\mathcal{C}_\mathrm{ad}$.  So we have proven $\mathrm{FPdim}(X)\in\mathfrak{D}_N$ for all $X\in\mathcal{O}(\mathcal{C})$.  Proposition \ref{selfadjoint} implies the existence of a sequence of nonnegative integers $\{\ell_j\}_{j=1}^\infty$ such that $1=\sum_{j=1}^\infty\ell_j[j-1]_N$.  Hence there exists exactly one object of squared Frobenius-Perron dimension $\epsilon_N^2$ as $\|\epsilon_N\|=1$ and $[n]_N\geq2$ for $n\geq2$.  Moreover $p\epsilon_N=1+\ell_1\epsilon_N+\epsilon_N^2$, or $\epsilon_N^2+(\ell_1-p)\epsilon_N+1=0$.  Pseudounitarity and Lemma \ref{modular} imply $\mathrm{FPdim}(\mathcal{C})/\mathrm{FPdim}(X)^2$ is an algebraic integer for all simple $X\in\mathcal{C}$.  Hence Proposition \ref{lemdiv} states that aside from the two simple objects of unique dimension, all other simple objects must have squared Frobenius-Perron dimension $\epsilon_N$ or $p\epsilon_N$.  But $\ell-p=-t_N$ as $\epsilon_N$ is the fundamental unit, so $\mathrm{FPdim}(X)^2=p\epsilon_N$ is impossible.  Thus $\mathrm{FPdim}(X)=\sqrt{\epsilon_N}$ for any nontrivial simple $X$ whose Frobenius-Perron dimension is not $\epsilon_N$.  This is a contradiction to $\mathcal{C}$ being self-adjoint.  Therefore the only possibility is there is exactly one invertible and one non-invertible object.  The category $\mathcal{C}$ cannot be rank 2 though as these categories have Frobenius-Perron dimension $2$ or $\epsilon_5\sqrt{5}$ \cite{ostrik}.
\end{proof}

\begin{corollary}
Let $N,\kappa\in\mathbb{Z}_{\geq2}$ be square-free.  No weakly quadratic pseudounitary fusion category $\mathcal{C}$ exists with $\mathrm{FPdim}(\mathcal{C})=\sqrt{\kappa\epsilon_N}$.
\end{corollary}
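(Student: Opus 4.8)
The plan is to reduce the statement to the modular case already settled in Lemma~\ref{previous} by passing to the Drinfeld center. If a weakly quadratic pseudounitary fusion category $\mathcal{C}$ with $\mathrm{FPdim}(\mathcal{C})=\sqrt{\kappa\epsilon_N}$ existed, then $\mathcal{Z}(\mathcal{C})$ would be a pseudounitary modular tensor category with $\mathrm{FPdim}(\mathcal{Z}(\mathcal{C}))=\mathrm{FPdim}(\mathcal{C})^2=\kappa\epsilon_N$; note that $\sqrt{\kappa\epsilon_N}\in\mathfrak{D}_N$ already forces $\|\epsilon_N\|=1$ and $\kappa\in\{\kappa_1,\kappa_2\}$ by Theorem~\ref{genthm}. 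It therefore suffices to show that no pseudounitary modular tensor category has Frobenius--Perron dimension $\kappa\epsilon_N$ for square-free $\kappa\geq2$, and I would prove this by strong induction on $\kappa$. The base case $\kappa=p$ prime is exactly Lemma~\ref{previous}, so the genuine content is the reduction of composite square-free $\kappa$ to strictly smaller instances.

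For the inductive step I would first dispose of the decomposable case. Writing $\mathcal{D}:=\mathcal{Z}(\mathcal{C})$ and splitting off a proper nontrivial modular subcategory via Müger's factorization \cite[Theorem 4.2]{mug1}, each Deligne factor is again pseudounitary modular and the product of their Frobenius--Perron dimensions is $\kappa\epsilon_N$. Unique factorization of quadratic $d$-numbers (Corollary~\ref{corthree}), together with the group structure $\mathfrak{G}_N\cong\mathbb{Z}/2\mathbb{Z}\times\mathbb{Z}/2\mathbb{Z}$ of Note~\ref{ostriknote}, constrains each factor to have dimension $\ell\epsilon_N^m$ or $\ell\epsilon_N^m\sqrt{a\epsilon_N}$ with $\ell\mid\kappa$ by Proposition~\ref{lemdiv}; since the class of $\kappa\epsilon_N$ is trivial in $\mathfrak{G}_N$, the factors carrying a square-root generator occur in matching pairs. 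A factor of dimension $\sqrt{a\epsilon_N}$ (square-free norm $a$) is, by Lemma~\ref{previouszero}, modular with no proper nontrivial subcategory, and its Deligne product $\mathcal{S}\boxtimes\mathcal{S}^{\mathrm{rev}}$ with its reverse is a pseudounitary modular tensor category of dimension $a\epsilon_N$; here $a$ is square-free with $a\mid\kappa$, and every simple object of $\mathcal{S}\boxtimes\mathcal{S}^{\mathrm{rev}}$ has $d$-number dimension, so this category meets the hypotheses of Proposition~\ref{selfadjoint} and either reduces to a strictly smaller instance $a<\kappa$ (handled by the inductive hypothesis) or to the prime case of Lemma~\ref{previous}.

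The hard part will be the indecomposable case, where $\mathcal{D}$ is itself Müger-prime yet $\kappa$ is composite, so no modular subcategory is available to factor. Here I would mimic the internal analysis of Lemma~\ref{previous}: by Lemma~\ref{subcategory} and Proposition~\ref{lemdiv} the integral subcategory $\mathcal{D}_{\mathrm{int}}$ has dimension dividing $\kappa$, and after splitting off the pointed part one arranges $\mathcal{D}=\mathcal{D}_{\mathrm{ad}}$, whose simple objects all have $d$-number dimension (Lemma~\ref{dfp}) with $\mathrm{FPdim}(X)^2$ dividing $\kappa\epsilon_N$ (Lemma~\ref{modular}). Applying Proposition~\ref{selfadjoint} yields an integer identity $[m]_N\,\mathrm{FPdim}(\mathcal{D}_{\mathrm{int}})=\sum_j\ell_j[j-m]_N$ which, read through the minimal polynomial of $\epsilon_N$ as in the Note following Proposition~\ref{selfadjoint}, pins down the admissible object dimensions. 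The main obstacle is precisely that for composite square-free $\kappa$ there are more admissible divisors of $\kappa^2$ than in the prime case, so this bookkeeping must be organized carefully—peeling off one prime factor of $\kappa$ at a time and forcing at each stage either a forbidden object of dimension $\sqrt{\epsilon_N}$, a subcategory of dimension $\epsilon_N$, or a pseudounitary modular category of dimension $p\epsilon_N$ excluded by Lemma~\ref{previous}—to close the induction.
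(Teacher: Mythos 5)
Your first paragraph already \emph{is} the paper's entire proof: the paper passes to the Drinfeld center, notes that $\mathcal{Z}(\mathcal{C})$ is pseudounitary and modular with $\mathrm{FPdim}(\mathcal{Z}(\mathcal{C}))=\mathrm{FPdim}(\mathcal{C})^2=\kappa\epsilon_N$, and invokes Lemma \ref{previous} --- nothing more. There is no induction on $\kappa$ in the paper; indeed its one-sentence proof writes the center's dimension as $p\epsilon_N$, silently treating $\kappa$ as prime even though the statement allows any square-free $\kappa\in\mathbb{Z}_{\geq2}$, and composite values do occur (e.g.\ $\kappa_1=6$ for $N=3$ and $\kappa_1=22$ for $N=11$ in Figure \ref{figkappa}). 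So the mismatch you identified between the square-free hypothesis of the corollary and the prime hypothesis of Lemma \ref{previous} is genuine, and your instinct that extra work is needed for composite $\kappa$ is a more careful reading than the paper's own.

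That said, your proposed completion does not close the gap. In the decomposable case, the branch you cannot handle is $a=\kappa$: nothing rules out $\mathcal{Z}(\mathcal{C})\simeq\mathcal{S}\boxtimes\mathcal{T}$ with $\mathrm{FPdim}(\mathcal{S})=\mathrm{FPdim}(\mathcal{T})=\sqrt{\kappa\epsilon_N}$, and then your auxiliary category $\mathcal{S}\boxtimes\mathcal{S}^{\mathrm{rev}}$ has dimension exactly $\kappa\epsilon_N$, the very instance being ruled out, so the inductive hypothesis is inapplicable and the reduction is circular. This is precisely the configuration the proof of Lemma \ref{previous} has to confront, and it does so not by induction but by the internal analysis ($\mathcal{C}=\mathcal{C}_\mathrm{ad}$, Proposition \ref{selfadjoint}, divisibility via Proposition \ref{lemdiv} and Lemma \ref{modular}), in which primality is used repeatedly: to force $\mathcal{C}_\mathrm{int}$ pointed via \cite[Corollary 8.30]{ENO}, and to limit the squared dimensions of simple objects to $\epsilon_N$ or $p\epsilon_N$. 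Your ``indecomposable case'' is exactly this analysis with the decisive bookkeeping deferred (``organized carefully,'' ``peeling off one prime factor at a time''), which is a statement of intent rather than a proof. As written, your argument establishes the corollary only for prime $\kappa$ --- which is what the paper's one-line proof establishes --- and the composite square-free case remains open both in your proposal and, strictly speaking, in the paper itself.
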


\begin{proof}
The Drinfeld center of $\mathcal{C}$ is pseudounitary and modular with $\mathrm{FPdim}(\mathcal{Z}(\mathcal{C}))=p\epsilon_N$.
\end{proof}

\begin{lemma}\label{previous2}
Let $N\in\mathbb{Z}_{\geq2}$ be square-free, and $p\in\mathbb{Z}_{\geq2}$ prime.  If $\mathcal{C}$ is a pseudounitary modular tensor category with $\mathrm{FPdim}(\mathcal{C})=p\epsilon^2_N$ then $\|\epsilon_N\|=-1$.
\end{lemma}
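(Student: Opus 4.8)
The plan is to argue by contraposition: assuming $\|\epsilon_N\|=1$, I will show that no pseudounitary modular tensor category $\mathcal{C}$ with $\mathrm{FPdim}(\mathcal{C})=p\epsilon_N^2$ can exist. Since $\mathcal{C}$ is modular and pseudounitary, $\mathrm{FPdim}(X)$ is a $d$-number for every simple $X$ (Lemma \ref{dfp}) and $\mathrm{FPdim}(X)^2\in\mathbb{Q}(\sqrt{N})$. First I would pin down the integral part: by Lemma \ref{subcategory} and Proposition \ref{lemdiv} one gets $\mathrm{FPdim}(\mathcal{C}_\mathrm{int})\mid p$, so it is $1$ or $p$. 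The value $p$ is excluded exactly as in Lemma \ref{previous}: then $\mathcal{C}_\mathrm{int}$ is pointed of prime rank (\cite[Corollary 8.30]{ENO}), its symmetric center is trivial or it is Tannakian, and factoring \cite[Theorem 4.2]{mug1} or de-equivariantizing produces a fusion category of Frobenius--Perron dimension $\epsilon_N^2$; but $\sigma(\epsilon_N^2)=\epsilon_N^{-2}<1$ shows $\epsilon_N^2\notin\mathfrak{D}_N^{+1}$, contradicting Lemma \ref{totally}. Hence $\mathcal{C}_\mathrm{int}=\mathrm{Vec}$.

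Next I would show $\mathcal{C}$ has no proper nontrivial fusion subcategory $\mathcal{D}$, which forces $\mathcal{C}=\mathcal{C}_\mathrm{ad}$ and thus $\mathrm{FPdim}(X)\in\mathfrak{D}_N$ for all simple $X$. Any such $\mathcal{D}$ has $\mathrm{FPdim}(\mathcal{D})\in\mathfrak{D}_N^{+1}$ with $\|\mathrm{FPdim}(\mathcal{D})\|$ dividing $\|\mathrm{FPdim}(\mathcal{C})\|=p^2$. Running through the classification (Theorem \ref{genthm}) eliminates norms $\pm1,\pm p,-p^2$ (no $d$-number of norm $\pm p$ or $-p^2$ exists, and the norm-$1$ options lie outside $\mathfrak{D}_N^{+1}$ or are integral, hence trivial), leaving only $\mathrm{FPdim}(\mathcal{D})=p\epsilon_N$. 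Then the symmetric center $\mathcal{D}'$ is Tannakian/super-Tannakian, hence integral, so $\mathcal{D}'\subseteq\mathcal{C}_\mathrm{int}=\mathrm{Vec}$ is trivial and $\mathcal{D}$ is modular; by \cite[Theorem 4.2]{mug1}, $\mathcal{C}\simeq\mathcal{D}\boxtimes\mathcal{E}$ with $\mathrm{FPdim}(\mathcal{E})=\epsilon_N\notin\mathfrak{D}_N^{+1}$, impossible. So $\mathcal{C}=\mathcal{C}_\mathrm{ad}$.

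With this in hand I would apply Proposition \ref{selfadjoint} with $m=2$ and $\mathrm{FPdim}(\mathcal{C}_\mathrm{int})=1$, obtaining $\sum_j\ell_j[j-2]_N=[2]_N=t_N$, and, reducing $\mathrm{FPdim}(\mathcal{C})=1+\sum_j\ell_j\epsilon_N^j$ modulo the minimal polynomial $x^2-t_Nx+1$ of $\epsilon_N$, the companion relation $\sum_j\ell_j[j-1]_N=p+1$. Lemma \ref{modular} together with Proposition \ref{lemdiv} gives $\ell_X\mid p$, so each simple object satisfies $\mathrm{FPdim}(X)^2=\ell_X\epsilon_N^{m_X}$ with $\ell_X\in\{1,p\}$; the reducibility criterion of Lemma \ref{above} then restricts the possible dimensions to $\epsilon_N^k$, to $\sqrt{N}\,\epsilon_N^k$ (only if $p=N$), or to $\sqrt{p\epsilon_N}\,\epsilon_N^k$ (only if $p=\kappa_i$). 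The key arithmetic observation is that $\mathrm{FPdim}(\mathcal{C})\in\mathfrak{D}_N^{+1}$ forces $p\epsilon_N^{-2}\geq1$, i.e.\ $p\geq\epsilon_N^2$; but when $\|\epsilon_N\|=1$ we have $\epsilon_N>\sqrt{N}$ so $N<\epsilon_N^2$, and since $\kappa_i\leq t_N+2$ while $\epsilon_N^2>t_N^2/2-1\geq t_N+2$ (as $t_N\geq4$, the only smaller candidate $t_N=3$ forcing the norm-$(-1)$ field $N=5$), we also get $\kappa_i<\epsilon_N^2$. Thus $p\notin\{N,\kappa_1,\kappa_2\}$, and every simple dimension must be $\epsilon_N^k$.

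Finally, with all dimensions of the form $\epsilon_N^k$, relation $\sum_j\ell_j[j-2]_N=t_N$ involves only even $j$ and, because $[2k-2]_N>t_N$ for $k\geq3$, forces exactly one simple object $Y$ with $\mathrm{FPdim}(Y)=\epsilon_N^2$ (with the remaining objects of dimension $\epsilon_N$). I would then decompose $Y\otimes Y^\ast$: its dimension $\epsilon_N^4$ must equal $1+s\epsilon_N+t\epsilon_N^2$ with $s,t\in\mathbb{Z}_{\geq0}$, and reducing via $\epsilon_N^2=t_N\epsilon_N-1$ yields $t=t_N^2$ and $s=-2t_N<0$, contradicting positivity of fusion coefficients. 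I expect this last step—ruling out the surviving all-$\epsilon_N^k$ configuration by fusion positivity of $Y\otimes Y^\ast$—and the arithmetic comparison $p\geq\epsilon_N^2$ versus $N,\kappa_i<\epsilon_N^2$ to be the crux of the argument, since they are precisely what fails in the norm $-1$ case where $\sqrt{N}\,\epsilon_N^k$ dimensions become available and $Y\otimes Y^\ast$ can decompose with nonnegative coefficients.
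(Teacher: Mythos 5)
Your proof is correct, and although its first half coincides with the paper's (forcing $\mathcal{C}_\mathrm{int}$ to be trivial, deducing $\mathcal{C}=\mathcal{C}_\mathrm{ad}$ so that every simple dimension lies in $\mathfrak{D}_N$, then feeding Proposition \ref{selfadjoint}, Lemma \ref{modular}, and Proposition \ref{lemdiv} into the analysis), the way you finish is genuinely different. The paper does not pass to the contrapositive: it splits on whether a simple object of squared dimension $\epsilon_N^4$ exists. When one does, the relation $[2]_N=\sum_j\ell_j[j-2]_N$ forces all other nontrivial simples to have dimension $\epsilon_N$, and positivity of $X\otimes X^\ast\cong\mathbbm{1}\oplus(\text{$n$ objects of dimension }\epsilon_N)$ yields $\epsilon_N^2-n\epsilon_N-1=0$, i.e.\ $\|\epsilon_N\|=-1$ \emph{directly}---this is the branch realized by genuine examples such as $\mathcal{Z}(G_{2,1})$ with $\mathrm{FPdim}=5\epsilon_5^2$. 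When no such object exists, the paper assumes $\|\epsilon_N\|=1$ and gets a contradiction by comparing $(p-\ell_2)\epsilon_N^2-t_N\epsilon_N-1=0$ against the minimal polynomial $x^2-t_Nx+1$. Your substitute for this dichotomy is the inequality chain $p\geq\epsilon_N^2>N,\kappa_1,\kappa_2$ (using $t_N\geq4$ when $\|\epsilon_N\|=1$), which appears nowhere in the paper: it eliminates the dimensions $\sqrt{N}\epsilon_N^k$ and $\sqrt{\kappa_i\epsilon_N}\epsilon_N^k$ at the outset, collapses everything into the first branch, and you then conclude by positivity of $Y\otimes Y^\ast$ for the unique simple $Y$ of dimension $\epsilon_N^2$, where reduction by $\epsilon_N^2=t_N\epsilon_N-1$ forces a negative fusion coefficient. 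Each route buys something: the paper's argument exhibits the near-group-type fusion rule that actually produces norm $-1$ in the surviving examples, while yours needs no case split, and applying positivity to $Y\otimes Y^\ast$ rather than to dimension-$\epsilon_N$ objects silently covers the degenerate rank-$2$ possibility (no simples of dimension $\epsilon_N$ at all), which the paper's first branch leaves implicit. One reordering is needed in your write-up: the parenthetical claim in your subcategory step that no $d$-number of norm $\pm p$ exists is precisely the statement $p\notin\{N,\kappa_1,\kappa_2\}$, so the inequality $p\geq\epsilon_N^2$---which depends only on Lemma \ref{totally} applied to $\mathcal{C}$ itself and on nothing from that step---should be established first; as written, the justification appears one step after its first use, though no actual circularity results.
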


\begin{proof}
Akin to the proof of Lemma \ref{previous}, $\mathcal{C}_\mathrm{int}$ must be trivial and $\mathcal{C}=\mathcal{C}_\mathrm{ad}$ as these facts only relied on $\|\mathrm{FPdim}(\mathcal{C})\|=p^2$ and $\epsilon_N^m\not\in\mathfrak{D}^{+1}_N$ for all $m\in\mathbb{Z}_{\geq1}$.  Hence the hypotheses of Proposition \ref{selfadjoint} are satisfied once again.  Proposition \ref{selfadjoint} allows the existence of a sequence of nonnegative integers $\{\ell_j\}_{j=1}^\infty$ such that $[2]_N=\sum_{j=1}^\infty\ell_j[j-2]_N$.  But $[-1]_N=[1]_N=1$, so if there exists $X\in\mathcal{O}(\mathcal{C})$ with $\mathrm{FPdim}(X)^2=\epsilon_N^4$ (there can be no larger as $[n]_N>[2]_N$ for $n\geq3$), then $X$ is unique up to isomorphism, and
\begin{equation}
p\epsilon_N^2=1+\ell_2\epsilon_N^2+\epsilon_N^4,
\end{equation}
and thus $\epsilon_N^2+\ell_2-p+\epsilon_N^{-2}=0$. But $\epsilon_N^{-1}=t_N-\epsilon_N$, hence $\epsilon_N$ has minimal polynomial
\begin{equation}
\epsilon_N^2+\ell_2-p+(t_N-\epsilon_N)^2=2\epsilon_N^2-2t_N\epsilon_N+\ell_2-p+t_N^2=0
\end{equation}
Now if $\mathrm{FPdim}(X)^2=\ell_X\epsilon_N^2$ for some $\ell_X\in\mathbb{Z}_{\geq1}$, then $\ell_X=1,p$, the latter being impossible as then $\mathrm{FPdim}(X)^2=\mathrm{FPdim}(\mathcal{C})$.  Thus $\mathrm{FPdim}(X)=\epsilon_N$ for each $X\in\mathcal{O}(\mathcal{C})$ except the unit object and one distinguished object of Frobenius-Perron dimension $\epsilon_N^2$.  But this implies for all $X\in\mathcal{O}(\mathcal{C})$ with $\mathrm{FPdim}(X)=\epsilon_N$,  $X\otimes X^\ast\cong\mathbbm{1}\oplus Y$ where $Y$ is a sum of $n$ simple objects of dimension $\epsilon_N$.  Hence
\begin{equation}
\epsilon_N^2-n\epsilon_N-1=0,
\end{equation}
and $\|\epsilon_N\|=-1$.  If there does not exist $X\in\mathcal{O}(\mathcal{C})$ with $\mathrm{FPdim}(X)^2=\epsilon_N^4$ then we have
\begin{equation}\label{inconsistent}
0=1+\ell_1\epsilon_N+(\ell_2-p)\epsilon_N^2+\ell_3\epsilon_N^3.
\end{equation}
The existence of $X\in\mathcal{O}(\mathcal{C})$ with $\mathrm{FPdim}(X)^2=\ell_x\epsilon_N^3$ for some $\ell_x\in\mathbb{Z}_{\geq1}$ implies $\ell_x=1,p$ by Lemma \ref{modular}.  The case  $\ell_x=p$ implies $\mathrm{FPdim}(X)^2>\mathrm{FPdim}(\mathcal{C})$ and the case $\ell_x=1$ implies $\mathrm{FPdim}(X)=\epsilon_N\sqrt{\epsilon_N}\not\in\mathfrak{D}_N$.  Therefore $\ell_3=0$ and $\ell_1=[2]_N=t_N$ as $\|\epsilon_N\|=1$ (or else $[2]_N\not\in\mathbb{Z}$).  But this is inconsistent, as Equation \ref{inconsistent} then becomes
\begin{equation}\label{fiftynine}
(p-\ell_2)\epsilon_N^2-t_N\epsilon_N-1=0.
\end{equation}
Hence $p-\ell_2=\pm1$, but in either case, Equation (\ref{fiftynine}) is not the minimal polynomial of $\epsilon_N$.
\end{proof}

\begin{lemma}\label{previous3}
Let $N\in\mathbb{Z}_{\geq2}$ be square-free, and $p\in\mathbb{Z}_{\geq2}$ prime.  If $\mathcal{C}$ is a pseudounitary modular tensor category with $\mathrm{FPdim}(\mathcal{C})=p^2\epsilon^2_N$, then $\|\epsilon_N\|=-1$.
\end{lemma}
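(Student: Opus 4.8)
The plan is to follow the template of Lemmas \ref{previous} and \ref{previous2}, now with $\|\mathrm{FPdim}(\mathcal{C})\| = p^4$, reducing the genuinely new cases to results already established. Throughout I would argue by contradiction, assuming $\|\epsilon_N\| = 1$. By Lemma \ref{subcategory} and Proposition \ref{lemdiv} the integer $\mathrm{FPdim}(\mathcal{C}_\mathrm{int})$ divides $p^2$, so $\mathrm{FPdim}(\mathcal{C}_\mathrm{int}) \in \{1, p, p^2\}$, and I would treat the three cases in turn.

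If $\mathrm{FPdim}(\mathcal{C}_\mathrm{int}) \in \{p, p^2\}$ I would run the factoring and de-equivariantization arguments of Lemma \ref{previous}: the integral part is pointed of prime or prime-square order, and the techniques of \cite[Theorem 4.2]{mug1} and \cite[Example 3.8]{DMNO} then produce either a pseudounitary modular category of Frobenius-Perron dimension $p\epsilon_N^2$---whence Lemma \ref{previous2} forces $\|\epsilon_N\| = -1$, against our assumption---or a fusion category of Frobenius-Perron dimension $\epsilon_N^2$, which is impossible because $\sigma(\epsilon_N^2) = \epsilon_N^{-2} < 1$ shows $\epsilon_N^2 \notin \mathfrak{D}_N^{+1}$, contradicting Lemma \ref{totally}.

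This leaves the principal case $\mathrm{FPdim}(\mathcal{C}_\mathrm{int}) = 1$. Since invertible objects are integral, the pointed subcategory $\mathcal{C}_\mathrm{pt} \subseteq \mathcal{C}_\mathrm{int}$ is trivial; as the universal grading group of a modular category is its group of invertibles, this gives $\mathcal{C} = \mathcal{C}_\mathrm{ad}$, so that $\mathrm{FPdim}(X) \in \mathfrak{D}_N$ for every simple $X$ and Proposition \ref{selfadjoint} applies with $m = 2$, yielding $[2]_N = \sum_j \ell_j[j-2]_N$. As in Lemma \ref{previous2}, the growth of the quantum integers $[n]_N$ bounds the exponents $j$ that can occur, while Lemma \ref{modular} and Theorem \ref{genthm} restrict each squared dimension $\mathrm{FPdim}(X)^2 = \ell_X\epsilon_N^{m_X}$ to integral part $\ell_X \in \{1, p\}$ and discard non-$d$-number values such as $p\sqrt{\epsilon_N}$ or $\epsilon_N\sqrt{\epsilon_N}$. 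Combining this with the dimension identity $p^2\epsilon_N^2 = 1 + \sum_j \ell_j \epsilon_N^j$, reduced modulo the minimal polynomial $x^2 - t_N x + 1$, I would branch on the largest occurring exponent: a simple object of Frobenius-Perron dimension $\epsilon_N$ has self-fusion forcing $\epsilon_N^2 - n\epsilon_N - 1 = 0$ for some $n \in \mathbb{Z}_{\geq0}$, hence $\|\epsilon_N\| = -1$, while in the complementary branch the reduced identity cannot be the minimal polynomial of $\epsilon_N$, exactly the inconsistency at the end of Lemma \ref{previous2}. Either way $\|\epsilon_N\| = 1$ is contradicted.

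The step I expect to be the main obstacle is this last case. Unlike Lemma \ref{previous2}, the coefficient $p^2$ permits several simple objects whose squared dimensions have integral part $1$ or $p$ spread across multiple exponents of $\epsilon_N$, so the bookkeeping needed to show the minimal-polynomial reduction is genuinely inconsistent, rather than merely constraining, is more delicate; the $d$-number restrictions of Theorem \ref{genthm} must be used first to prune the admissible object dimensions before the fusion and minimal-polynomial arguments become decisive. A secondary technical point is ensuring the de-equivariantization and factoring reductions in the cases $\mathrm{FPdim}(\mathcal{C}_\mathrm{int}) = p, p^2$ genuinely return pseudounitary modular (or at least fusion) categories of the stated dimension, which I would keep uniform with the references already invoked in Lemma \ref{previous}.
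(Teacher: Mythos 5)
Your treatment of the cases $\mathrm{FPdim}(\mathcal{C}_\mathrm{int})=p,p^2$ is essentially the paper's argument: a rank-$p$ pointed piece is nondegenerate or Tannakian, and factoring (\cite[Theorem 4.2]{mug1}) or de-equivariantization (\cite{DMNO}) yields either a pseudounitary modular category of Frobenius--Perron dimension $p\epsilon_N^2$, handled by Lemma \ref{previous2}, or an impossible category of dimension $\epsilon_N^2\not\in\mathfrak{D}_N^{+1}$. One branch is missing, though: for $\mathrm{FPdim}(\mathcal{C}_\mathrm{int})=p^2$ you assert the integral part is pointed, but \cite[Proposition 8.32]{ENO}, which the paper invokes, allows a second possibility when $p=2$, namely that $\mathcal{C}_\mathrm{int}$ is a $\mathbb{Z}/2\mathbb{Z}$ Tambara--Yamagami category. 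The paper dispatches this branch directly: pseudounitarity and Lemma \ref{totally} force $\sigma(4\epsilon_N^2)\geq1$, hence $\epsilon_N\leq2$, which happens only for $N=5$, where $\|\epsilon_5\|=-1$. Your proposal never confronts this case.

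The more substantial issue is your principal case $\mathrm{FPdim}(\mathcal{C}_\mathrm{int})=1$. You are right that Proposition \ref{selfadjoint} with $m=2$ is the available tool there, but your sketch does not close the argument, and the difficulty you flag is real: with total dimension $p^2\epsilon_N^2$, the divisibility constraints of Lemma \ref{modular} and Proposition \ref{lemdiv} only pin the squared dimensions to $\ell_X\epsilon_N^{m_X}$ with $\ell_X\in\{1,p,p^2\}$, so unlike Lemma \ref{previous2} there is no forced dichotomy between ``a unique object of dimension $\epsilon_N^2$ plus objects of dimension $\epsilon_N$'' and an immediately inconsistent minimal-polynomial identity; several integral parts can be distributed over several exponents, and the pruning via Theorem \ref{genthm} (e.g.\ discarding $\epsilon_N\sqrt{\epsilon_N}$, or $p\epsilon_N^{2k}$ unless $N=p$) removes only some of them. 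As written, the proposal therefore has a gap exactly where you predicted. You should know, however, that the paper's own proof is silent on this same case: it lists $\mathrm{FPdim}(\mathcal{C}_\mathrm{int})=1,p,p^2$ and then argues only the cases $p^2$ and $p$, never returning to the trivial one, which cannot be excluded a priori (triviality of $\mathcal{C}_\mathrm{pt}$ merely forces $\mathcal{C}=\mathcal{C}_\mathrm{ad}$, as you correctly observe). So your attempt is incomplete, but it is incomplete at a step the paper omits rather than resolves; completing it would require carrying out the $[2]_N=\sum_j\ell_j[j-2]_N$ bookkeeping with $\ell_X\in\{1,p,p^2\}$ in full.
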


\begin{proof}
As in the previous two proofs we note $\mathrm{FPdim}(\mathcal{C}_\mathrm{int})=1,p,p^2$.  If $\mathrm{FPdim}(\mathcal{C}_\mathrm{int})=p^2$, then by \cite[Proposition 8.32]{ENO} either $p$ is odd and $\mathcal{C}_\mathrm{int}$ is pointed, or $p=2$ and $\mathcal{C}_\mathrm{int}$ is a a $\mathbb{Z}/2\mathbb{Z}$ Tambara-Yamagami category (see Example \ref{tambara}).  In the latter case we require $\sigma(4\epsilon_N^2)\geq1$ which implies $2\geq\epsilon_N$.  This is only true for $N=5$ and $\|\epsilon_5\|=-1$.  Now assume $\mathcal{C}_\mathrm{int}$ is pointed of rank $p^2$.  There must exist a pointed fusion subcategory $\mathcal{D}\subset\mathcal{C}_\mathrm{int}$ of rank $p$ which is either nondegenerate or Tannakian.  In the latter case one can de-equivariantize $\mathcal{C}$ to obtain a fusion category of Frobenius-Perron dimension $p\epsilon_N^2$ \cite[Lemma 3.11]{DMNO} and then $\|\epsilon_N\|=-1$ by Lemma \ref{previous2}.  And in the former case one can factor $\mathcal{C}_\mathrm{int}$ into two nondegenerate pointed factors of rank $p$, hence $\mathcal{C}$ factors into $\mathcal{C}_\mathrm{int}$ and an impossible fusion category with Frobenius-Perron dimension $\epsilon_N^2\not\in\mathfrak{D}_N^{+1}$.  Now we consider the case $\mathrm{FPdim}(\mathcal{C}_\mathrm{int})=p$ and thus $\mathcal{C}_\mathrm{int}$ is pointed of rank $p$.  For the same reasons as above, this implies $\|\epsilon_N\|=-1$ by Lemma \ref{previous2}.
\end{proof}

\begin{example}
Lemma \ref{previous2} and Lemma \ref{previous3} cannot be extended to all square-free $N\in\mathbb{Z}_{\geq2}$ with $\|\epsilon_N\|=-1$ as $\mathrm{FPdim}(\mathcal{Z}(G_{2,1}))=5\epsilon_5^2$ and $\mathrm{FPdim}(\mathrm{Vec}_{\mathbb{Z}/5\mathbb{Z}}\boxtimes\mathcal{Z}(G_{2,1}))=5^2\epsilon_5^2$.
\end{example}

\begin{proposition}\label{doesnotexist}
Let $N\in\mathbb{Z}_{\geq2}$ be square-free and $p\in\mathbb{Z}_{\geq2}$ prime.  No pseudounitary fusion category $\mathcal{C}$ exists with $\mathrm{FPdim}(\mathcal{C})=p\epsilon_N$.
\end{proposition}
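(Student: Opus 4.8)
The plan is to reduce this statement, which concerns an arbitrary (not necessarily braided) pseudounitary fusion category, to the modular case already settled in Lemma \ref{previous3} by passing to the Drinfeld center, and then to discard the single surviving possibility using the Galois-maximality constraint of Lemma \ref{totally}. This mirrors the device already used in the corollary following Lemma \ref{previous}, where a general pseudounitary category is studied through its center.

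First I would suppose, toward a contradiction, that a pseudounitary fusion category $\mathcal{C}$ with $\mathrm{FPdim}(\mathcal{C})=p\epsilon_N$ exists. The Drinfeld center $\mathcal{Z}(\mathcal{C})$ is a modular tensor category, and since $\mathcal{C}$ is pseudounitary we have $\dim(\mathcal{Z}(\mathcal{C}))=\dim(\mathcal{C})^2=\mathrm{FPdim}(\mathcal{C})^2=\mathrm{FPdim}(\mathcal{Z}(\mathcal{C}))$, so $\mathcal{Z}(\mathcal{C})$ is itself pseudounitary, with $\mathrm{FPdim}(\mathcal{Z}(\mathcal{C}))=p^2\epsilon_N^2$. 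Thus $\mathcal{Z}(\mathcal{C})$ meets the hypotheses of Lemma \ref{previous3} for the same prime $p$, which forces $\|\epsilon_N\|=-1$.

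It then remains to eliminate the surviving case $\|\epsilon_N\|=-1$, which I would dispatch directly from Lemma \ref{totally}. When $\|\epsilon_N\|=-1$ we have $\sigma(\epsilon_N)=-\epsilon_N^{-1}<0$, so $\sigma(\mathrm{FPdim}(\mathcal{C}))=\sigma(p\epsilon_N)=-p\epsilon_N^{-1}<1$; equivalently $p\epsilon_N\notin\mathfrak{D}_N^{+1}$. This contradicts the inequality $\sigma(\mathrm{FPdim}(\mathcal{C}))\geq1$ guaranteed by Lemma \ref{totally}, so no such $\mathcal{C}$ can exist.

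The proposition is therefore short once the supporting lemmas are in hand, and the only step requiring genuine care is the passage to the center: one must verify that $\mathrm{FPdim}(\mathcal{Z}(\mathcal{C}))=\mathrm{FPdim}(\mathcal{C})^2$ and that $\mathcal{Z}(\mathcal{C})$ inherits both modularity and pseudounitarity, so that Lemma \ref{previous3} may legitimately be invoked. Both facts are standard, which is why I expect the real obstacle to have been the proof of Lemma \ref{previous3} rather than anything in the deduction presented here.
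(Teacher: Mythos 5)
Your proof is correct and takes essentially the same route as the paper: both pass to the Drinfeld center $\mathcal{Z}(\mathcal{C})$, invoke Lemma \ref{previous3} to force $\|\epsilon_N\|=-1$, and contradict this with the constraint $\sigma(\mathrm{FPdim}(\mathcal{C}))\geq1$ from Lemma \ref{totally} (i.e.\ $p\epsilon_N\in\mathfrak{D}_N^{+1}$ forces $\|\epsilon_N\|=1$). The only difference is the order in which the two contradictory conclusions are derived, which is immaterial.
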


\begin{proof}
As $p\epsilon_N\in\mathfrak{D}_N^{+1}$, then $\|\epsilon_N\|=1$.  The Drinfeld center of $\mathcal{C}$ is a pseudounitary modular tensor category with $\mathrm{FPdim}(\mathcal{Z}(\mathcal{C}))=p^2\epsilon_N^2\in\mathfrak{D}^{+1}_N$, contrary to Lemma \ref{previous3}.
\end{proof}

\end{subsection}


\end{section}

\begin{section}{Further discussion}
Many topics covered in this exposition deserve further investigation.  We separate the following into concerns related to number theory and those regarding fusion categories.

\begin{subsection}{Number theory}
Recall the following conjecture from Note \ref{ostriknote} and the comments following Definition \ref{deforder}.
\begin{conjecture}[Ostrik]\label{ost}
Let $\mathbb{K}$ be a cyclic extension of $\mathbb{Q}$ of degree $n\in\mathbb{Z}_{\geq2}$.  If $G$ is the Galois group of $\mathbb{K}$ over $\mathbb{Q}$ and $U$ is the unit group of $\mathbb{K}$, then
\begin{equation}
\mathfrak{G}_\mathbb{K}=H^1(G,U)=\left\{\begin{array}{lcc}\mathbb{Z}/n\mathbb{Z} & : & \exists u\in U\text{ with }\|u\|=-1 \\ \mathbb{Z}/n\mathbb{Z}\times \mathbb{Z}/n\mathbb{Z} & : & \text{else}\end{array}\right..
\end{equation}
\end{conjecture}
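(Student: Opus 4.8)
The plan is to establish the isomorphism $\mathfrak{G}_\mathbb{K}\cong H^1(G,U)$ in complete generality first, and then to reduce the structural computation for cyclic $G$ to standard invariants of the cohomology of cyclic groups; I expect the passage from the \emph{order} of $H^1(G,U)$ to its \emph{isomorphism type} to be the genuine obstacle.

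\emph{The cohomological identification.} For any finite Galois $\mathbb{K}/\mathbb{Q}$ with $G=\mathrm{Gal}(\mathbb{K}/\mathbb{Q})$, I would assign to a generalized $d$-number $\alpha$ the function $c_\alpha\colon G\to U$ given by $c_\alpha(\tau)=\alpha/\tau(\alpha)$. Writing $\alpha^k=qu$ with $q\in\mathbb{Q}^\times$ and $u\in U$ gives $(\alpha/\tau(\alpha))^k=u/\tau(u)\in U$, and any element of $\mathbb{K}^\times$ whose $k$th power is a unit is itself a unit, so $c_\alpha$ is $U$-valued; the identity $c_\alpha(\tau\rho)=c_\alpha(\tau)\,\tau(c_\alpha(\rho))$ shows $c_\alpha\in Z^1(G,U)$ and $\alpha\mapsto c_\alpha$ is a homomorphism. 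Rational scalars are sent to the trivial cocycle and each $u\in U$ to the coboundary $\tau\mapsto u/\tau(u)$, so the assignment descends to $\mathfrak{G}_\mathbb{K}\to H^1(G,U)$. It is injective because $c_\alpha\in B^1(G,U)$ forces $\alpha/u$ to be $G$-fixed, hence an element of $\mathbb{K}^G=\mathbb{Q}$; it is surjective by Hilbert's Theorem~90 for $\mathbb{K}^\times$, since a cocycle valued in $U\subset\mathbb{K}^\times$ has the form $\tau\mapsto\alpha/\tau(\alpha)$ for some $\alpha\in\mathbb{K}^\times$, and then $\alpha^n/\|\alpha\|=\prod_{\tau\in G}(\alpha/\tau(\alpha))\in U$ exhibits $\alpha$ as a generalized $d$-number. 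Notably this step uses no cyclicity.

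\emph{Order via the Herbrand quotient.} Taking $G=\langle\sigma\rangle$ of order $n$, periodicity identifies $H^1(G,U)=\hat{H}^{-1}(G,U)=\ker(N)/(\sigma-1)U$, where $N(u)=\|u\|$. I would determine its order from the Herbrand quotient $q(U)=|\hat{H}^0(G,U)|/|H^1(G,U)|$ together with $\hat{H}^0(G,U)=\{\pm1\}/\{\|u\|:u\in U\}$, which is trivial exactly when $\mathbb{K}$ contains a unit of norm $-1$ and is $\mathbb{Z}/2\mathbb{Z}$ otherwise. As the roots of unity form a finite, hence $q$-trivial, submodule $\mu$, one has $q(U)=q(U/\mu)$, and $U/\mu\otimes\mathbb{Q}$ is determined by the archimedean places; in the totally real case it is the augmentation ideal $I_G\otimes\mathbb{Q}$, and since the Herbrand quotient of a $G$-lattice depends only on its rational representation, the sequence $0\to I_G\to\mathbb{Z}[G]\to\mathbb{Z}\to0$ (with $\mathbb{Z}[G]$ cohomologically trivial and $q(\mathbb{Z})=n$) yields $q(U)=1/n$. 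Thus $|H^1(G,U)|=n$ when a unit of norm $-1$ exists and $2n$ otherwise.

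\emph{The main obstacle.} The Herbrand quotient sees only the order, so the crux of the conjecture — cyclicity $\mathbb{Z}/n\mathbb{Z}$ in the norm $-1$ case, and the exact invariant factors otherwise — demands a direct analysis of $(\sigma-1)\colon U/\mu\to\ker(N)$ at the level of the integral $G$-lattice rather than its rationalization. I would try to produce a distinguished cocycle of order $n$ (the class of a norm $-1$ unit, or of the $\sqrt{\kappa\epsilon_N}$-type generators in the quadratic model) and prove it generates. Two points flagged by the order computation indicate where the statement must be handled with care. First, the argument genuinely uses the totally real structure: a totally complex cyclic field such as $\mathbb{Q}(i)$ has $q(U)=1$ and $|H^1(G,U)|=n$ despite having no unit of norm $-1$, so a real-place hypothesis appears necessary for the stated dichotomy. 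Second, because $\hat{H}^0(G,U)$ has order at most $2$, one always has $|H^1(G,U)|\leq 2n$; hence the literal $\mathbb{Z}/n\mathbb{Z}\oplus\mathbb{Z}/n\mathbb{Z}$ can hold only at $n=2$, precisely the case already secured by Lemma~\ref{xyz}, and for $n>2$ the target in the norm $+1$ case should instead be a group of order $2n$. Confirming cyclicity for all $n$ in the norm $-1$ case, and pinning down this order-$2n$ group, is where I expect the real difficulty to lie.
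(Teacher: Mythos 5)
You should first recalibrate the target: Conjecture \ref{ost} is stated in the paper as an \emph{open} conjecture of Ostrik, so there is no proof in the paper to compare yours against. The paper's supporting material is Note \ref{ostriknote}, where the identification $\mathfrak{G}_\mathbb{K}=H^1(G,U)$ and the order claims ($n$ when a norm $-1$ unit exists, $2n$ otherwise) are attributed to Ostrik \emph{without proof}, together with the case $n=2$, which the paper obtains from the explicit generators of Theorem \ref{genthm} via Lemma \ref{xyz}. Measured against that, what you actually prove is sound and goes beyond the paper for $n>2$: your Hilbert 90 argument for $\mathfrak{G}_\mathbb{K}\cong H^1(G,U)$ is correct (the two pivotal points — that an element of $\mathbb{K}^\times$ whose $k$th power is a unit is itself a unit, and that $\prod_{\tau\in G}\alpha/\tau(\alpha)=\alpha^n/\|\alpha\|$ exhibits the Hilbert 90 representative as a generalized $d$-number — both check out), and the Herbrand-quotient computation giving $|H^1(G,U)|=n\,|\hat{H}^0(G,U)|$ for totally real $\mathbb{K}$ is the standard one and is correct. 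Your approach is cohomological where the paper's $n=2$ evidence is by explicit generators; the trade-off is that Lemma \ref{xyz} pins down the isomorphism type for $n=2$, which the Herbrand quotient cannot do for any $n$. The step you leave open — passing from order to isomorphism type — is precisely the conjectural content, and no blind attempt could be faulted for not closing it; your incompleteness there is inherent, and you state it plainly.

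More importantly, your two flags are not side remarks but genuine corrections to the statement, and both are right. Without a totally real hypothesis the conjecture is false as written: $\mathbb{Q}(i)$ is cyclic of degree $2$ with no unit of norm $-1$, yet $U$ is finite, the Herbrand quotient is $1$, and $H^1(G,U)\cong\mathbb{Z}/2\mathbb{Z}$ rather than the predicted $\mathbb{Z}/2\mathbb{Z}\times\mathbb{Z}/2\mathbb{Z}$ — and this agrees with the paper's own Example \ref{excomplex}, where $\mathfrak{D}_{-1}$ is generated over integer multiples of units by $1+i$ alone, with $(1+i)^2=2i\in\mathbb{Q}\cdot U$. Second, for $n>2$ the asserted group $\mathbb{Z}/n\mathbb{Z}\oplus\mathbb{Z}/n\mathbb{Z}$ has order $n^2\neq 2n$, which is inconsistent with the order claim recorded in the very same Note \ref{ostriknote}; the two readings agree only at $n=2$, exactly the case Lemma \ref{xyz} settles (where the answer is indeed $\mathbb{Z}/2\mathbb{Z}\times\mathbb{Z}/2\mathbb{Z}$, of order $2n=n^2=4$). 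So the ``else'' branch should assert some group of order $2n$, and identifying which one — along with proving cyclicity in the norm $-1$ case — is the genuinely open problem. In short: you did not prove the conjecture (nobody has), but you proved the non-conjectural part of Note \ref{ostriknote} that the paper leaves unproved, and you correctly identified that the conjecture requires amendment before it can even be true for $n>2$.
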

Theorem \ref{genthm} shows that even in the case $n=2$, there exist fields $\mathbb{K}$ for which the rational generators of $\mathfrak{G}_\mathbb{K}$ do not generate $\mathfrak{D}_\mathbb{K}$ integrally.
\begin{example}
Consider the cubic extension $\mathbb{K}=\mathbb{Q}(2\cos(\pi/7))$.  By the Dirichlet unit theorem, $\mathcal{O}_\mathbb{K}^\times=\{\pm\epsilon_1^{m_1}\epsilon_2^{m_2}:m_1,m_2\in\mathbb{Z}\}$ where $\epsilon_1=2\cos(\pi/7)$ and $\epsilon_2=\epsilon_1^2-2=2\sin(3\pi/14)$.  As $\|\epsilon_1\|=-1$, Conjecture \ref{ost} states that $\mathfrak{G}_\mathbb{K}$ is generated by a single $d$-number.  We find that $\alpha:=\sqrt[3]{7\epsilon_1^2\epsilon_2}$ is such a generator with minimal polynomial $x^3-7x-7$.  Is $\mathfrak{D}_\mathbb{K}$ generated integrally by $\alpha$?
\end{example}
\begin{question}
If $\mathbb{K}$ is a cyclic extension of $\mathbb{Q}$, what conditions ensure the minimal generating set of $\mathfrak{G}_\mathbb{K}$ (as a group) is also a minimal generating set for $\mathfrak{D}_\mathbb{K}$ (as a monoid)?
\end{question}
\begin{conjecture}\label{contwo}
Let $\mathbb{K}$ be a totally real algebraic number field with $[\mathbb{K}:\mathbb{Q}]>2$.  Then
\begin{equation}
\{\alpha\in\mathfrak{D}_\mathbb{K}:\alpha\geq\sigma(\alpha)\geq1\text{ for all }\sigma\in\mathrm{Gal}(\mathbb{K}/\mathbb{Q})\}
\end{equation}
is not a discrete subset of $\mathbb{R}$.
\end{conjecture}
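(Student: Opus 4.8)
The plan is to trace the non-discreteness to the jump in the rank of the unit group. For a totally real field $\mathbb{K}$ of degree $n$, Dirichlet's unit theorem gives $\mathcal{O}_\mathbb{K}^\times$ rank $n-1$, and the degree-$2$ discreteness of Theorem \ref{dis} is powered by this rank being exactly $1$: fixing a real embedding $\iota:\mathbb{K}\hookrightarrow\mathbb{R}$, the values $\{\iota(u):u\in\mathcal{O}_\mathbb{K}^\times\}$ form the discrete multiplicative group $\{\pm\epsilon_N^k\}$. For $n>2$ the rank is at least $2$, and I would first record the elementary fact that the image of a rank-$\geq2$ unit lattice under a single real embedding is a dense subgroup of $\mathbb{R}_{>0}$: passing to logarithms, this is the statement that a rank-$\geq2$ lattice in the trace-zero hyperplane projects to a non-discrete, hence dense, subgroup of one coordinate axis, valid as soon as $\mathbb{K}$ has no nontrivial unit with $|\iota(u)|=1$ (the generic situation). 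Since every unit is a $d$-number by Lemma \ref{def}(iv) with $m=1$, this already exhibits $d$-numbers whose real values accumulate, and the task reduces to arranging the Galois-maximality condition.

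Second, I would work in the logarithmic coordinates $w=(\log|\tau_i(\alpha)|)_i$, where the $d$-number condition of Lemma \ref{def}(iii) places $w$ in a coset of the rank-$(n-1)$ log-unit lattice, and the constraints $\alpha\geq\sigma(\alpha)\geq1$ cut out the region in which the coordinates indexed by the orbit $\mathrm{Gal}(\mathbb{K}/\mathbb{Q})\cdot\iota$ lie in $[0,\log M]$. The crucial point is that when this orbit is a proper subset of the $n$ embeddings, the remaining ``free'' coordinates are unconstrained, so the region is bounded in only $|\mathrm{Gal}(\mathbb{K}/\mathbb{Q})|$ of the $n$ directions and unbounded in the rest; I would then show that the lattice meets this region in infinitely many points whose $\iota$-coordinate is dense in a fixed bounded interval, yielding an accumulation point. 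In the cleanest case, where $\mathrm{Aut}(\mathbb{K}/\mathbb{Q})$ is trivial (generic for $n\geq3$), the only surviving constraint is $\iota(\alpha)\geq1$, and the totally positive units with $\iota(u)\geq1$ are already dense in $[1,\infty)$ by the first step, which closes the argument immediately.

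The hard part, and the reason this persists as a conjecture, is the transitive case $\mathbb{K}/\mathbb{Q}$ Galois, where $\mathrm{Gal}(\mathbb{K}/\mathbb{Q})$ permutes all $n$ embeddings. There the constraint forces \emph{every} conjugate of $\alpha$ into $[1,M]$, so $\alpha$ is an algebraic integer with all conjugates bounded; its minimal polynomial then has bounded integer coefficients, leaving only finitely many such $\alpha$ below any $M$. Thus the mechanism above collapses precisely when no embedding is free, and the compactness just sketched in fact suggests the set is \emph{discrete} in the Galois setting. A proof uniform in all totally real $\mathbb{K}$ of degree $>2$ must therefore either read $\mathrm{Gal}(\mathbb{K}/\mathbb{Q})$ as the honest (possibly proper) automorphism group and restrict to the non-Galois case, or supply an entirely different accumulating family for Galois fields. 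I expect the non-Galois route to succeed via the density statement together with a fundamental-domain analysis of the action of $\mathrm{Gal}(\mathbb{K}/\mathbb{Q})$ on the free coordinates, while the transitive case is the genuine obstruction and the sharpest test of whether the conjecture as literally stated can hold.
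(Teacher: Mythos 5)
The paper does not prove this statement: Conjecture \ref{contwo} is posed as an open problem, so there is no argument of the author's to compare yours against, and your proposal can only be judged on its own merits. Its central finding is correct, and you should state it more forcefully: your third paragraph does not merely ``suggest'' discreteness in the Galois case, it \emph{proves} it, and thereby refutes the conjecture as literally stated. If $\mathbb{K}/\mathbb{Q}$ is Galois, totally real, of degree $n>2$ (such fields exist in every degree: cyclic cubics like $\mathbb{Q}(2\cos(2\pi/7))$, biquadratic fields like $\mathbb{Q}(\sqrt{2},\sqrt{3})$), then $\mathrm{Gal}(\mathbb{K}/\mathbb{Q})$ is unambiguous, the condition places every conjugate of $\alpha$ in $[1,\alpha]$, and for $\alpha\leq M$ the elementary symmetric functions of the conjugates are rational integers of absolute value at most $2^nM^n$; hence only finitely many minimal polynomials, and so finitely many $\alpha$, occur in $[1,M]$, making the set closed and discrete. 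Note that this counting uses the $d$-number hypothesis nowhere, and the same argument shows that under the ``all Galois conjugates'' reading the set is discrete for \emph{every} number field of every degree. So the conjecture can only survive if $\mathrm{Gal}(\mathbb{K}/\mathbb{Q})$ is read as $\mathrm{Aut}(\mathbb{K}/\mathbb{Q})$ and $\mathbb{K}$ is required to be non-Galois, exactly as you conclude.

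In the non-Galois direction your mechanism is sound and you in fact underclaim it: for a totally real field no genericity hypothesis is needed, since $|\iota(u)|=1$ with $\iota(u)$ real forces $\iota(u)=\pm1$, hence $u=\pm1$ by injectivity of $\iota$; so the rank-$(n-1)\geq2$ log-unit lattice injects into the $\iota$-coordinate and its image, a noncyclic finitely generated subgroup of $\mathbb{R}$, is dense. When $\mathrm{Aut}(\mathbb{K}/\mathbb{Q})$ is trivial this completely proves non-discreteness of $\{\alpha\in\mathfrak{D}_\mathbb{K}:\alpha\geq1\}$. The genuine gap is the intermediate case where $\mathrm{Aut}(\mathbb{K}/\mathbb{Q})$ is nontrivial but proper (first possible at $n=4$, since $|\mathrm{Aut}(\mathbb{K}/\mathbb{Q})|$ divides $n$): there your appeal to density of the lattice in the constrained coordinates fails. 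Concretely, for a non-Galois totally real quartic $\mathbb{K}$ with $\mathrm{Aut}(\mathbb{K}/\mathbb{Q})=\langle\sigma\rangle$ and quadratic fixed field $F$, the two constrained log-coordinates of any unit $u$ sum to $\log|\iota(N_{\mathbb{K}/F}(u))|$, which lies in the discrete group $\mathbb{Z}\log\epsilon_F$, so the projection of the unit lattice to the constrained plane is never dense; moreover units of relative norm $\pm1$ satisfy the constraints only trivially, since their constrained coordinates sum to $0$ while being required nonnegative. The case can be repaired --- the coset $\epsilon_F E_1$, with $E_1$ the rank-$2$ group of relative norm-one units, has constrained coordinates dense in the line of sum $2\log\epsilon_F$, which meets the wedge $\{0\leq x_2\leq x_1\leq\log M\}$ in a segment of positive length once $M\geq\epsilon_F^2$, giving the required accumulation after passing to totally positive elements --- but this relative-unit argument, and its generalization over intermediate subfields when $|\mathrm{Aut}(\mathbb{K}/\mathbb{Q})|\geq3$, is absent from your write-up. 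In summary: you have settled the two extreme cases and correctly diagnosed the defect in the conjecture as stated, but a uniform treatment of the intermediate automorphism case is still missing.
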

Even if Conjecture \ref{contwo} is true, classifying $d$-numbers in higher-degree extensions is an interesting number-theoretic question.  A similar classification to Theorem \ref{genthm} could possibly be attained for totally real biquadratic extensions of $\mathbb{Q}$ using the work of Kubota \cite{kubota}.  In particular \cite[Satz 1]{kubota} states all generators of the unit group of $\mathbb{Q}(\sqrt{M},\sqrt{N})$ for some square-free $M,N\in\mathbb{Z}_{\geq2}$ can be expressed in terms of the quadratic fundamental units $\epsilon_M$, $\epsilon_N$ and $\epsilon_{MN}$ and their square roots.
\end{subsection}


\begin{subsection}{Fusion categories}\label{confusion}
Example \ref{largeunit} provides evidence that perhaps there exist quadratic number fields which do not contain any ``new'' Frobenius-Perron dimensions of fusion categories (not integers), as the smallest such fusion category would be unusually large relative to the discriminant of the field.
\begin{question}\label{quest1}
Does there exist a square-free $N\in\mathbb{Z}_{\geq2}$ such that if $\mathcal{C}$ is a fusion category with $\mathrm{FPdim}(\mathcal{C})\in\mathbb{Q}(\sqrt{N})$, then $\mathcal{C}$ is weakly integral?
\end{question}
So few general constructions of weakly quadratic fusion categories exist that we dare not conjecture an answer to Question \ref{quest1} at this time, but it seems reasonable to conjecture that \emph{if} the answer is yes, then there exist infinitely-many such fields.
\begin{question}
Can all fusion categories with $\mathrm{FPdim}(\mathcal{C})\in\mathfrak{D}_2$ be constructed from known examples?
\end{question}
\begin{question}
Are all weakly quadratic fusion categories Galois conjugate to a pseudounitary fusion category?
\end{question}
The Frobenius-Perron dimensions of simple objects in a pseudounitary weakly quadratic fusion category need not be $d$-numbers as is shown by example.  But the Frobenius-Perron dimensions of simple objects in the Drinfeld center of such a category \emph{are} $d$-numbers.  This suggests there may be a way to classify Frobenius-Perron dimensions which are not $d$-numbers by passing through the Drinfeld center.
\begin{question}
Is there a description of Frobenius-Perron dimensions of simple objects in a weakly quadratic fusion category in terms of $d$-numbers?  Is this possible for an arbitrary fusion category?
\end{question}
Based on the similarity between the analogous result for near-group categories and Corollary \ref{corgen} we also make the following conjecture.
\begin{conjecture}
If $\mathcal{C}$ is a generalized near-group category of type $(G,G_\rho,\{k_h:h\in G/G_\rho)$ and $K:=\sum_{h\in G/G_\rho}k_h$, then $|G_\rho|$ divides $K$.
\end{conjecture}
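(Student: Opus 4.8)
The known bound $|G_\rho|\mid K^2$ (Corollary \ref{corgen}) is exactly what the $d$-number formalism can see: $\mathrm{FPdim}(\rho)$ has minimal polynomial $x^2-Kx-|G_\rho|$, so it has norm $-|G_\rho|$ and trace $K$, and the criterion of Lemma \ref{lemma}(iv), namely $4(K/2)^2/\|\mathrm{FPdim}(\rho)\|=K^2/(-|G_\rho|)\in\mathbb{Z}$, reads precisely as $|G_\rho|\mid K^2$ and nothing sharper. Comparing norms in $\mathrm{FPdim}(\rho)^2=|G_\rho|\epsilon_N^{k}$ (valid with $\mathrm{ord}(\mathrm{FPdim}(\rho))\leq 2$ from Lemma \ref{lemma}) makes this transparent, but it also shows that the gap between $K^2$ and $K$ concerns only the square part of $|G_\rho|$, which is invisible at the level of the fusion-ring invariant $\mathrm{FPdim}(\rho)$ because the $d$-number conditions are stable under squaring. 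Thus any proof of the sharper $|G_\rho|\mid K$ must bring in genuinely categorical structure, concretely the action of the invertibles on simple objects and on morphism spaces. Consistently, the near-group case $G_\rho=G$, where $|G|\mid K^2$ is likewise automatic but $|G|\mid k$ is a theorem, already requires such input \cite[Theorem A.6]{ost15}.

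The plan is to realize $K$ as the dimension of a space carrying a free action of (a central extension of) $G_\rho$. First, for $g\in G_\rho$ the isomorphism $g\otimes\rho\cong\rho$ gives $g\otimes(\rho\otimes\rho)\cong\rho\otimes\rho$; comparing non-invertible constituents yields $k_{gh}=k_h$, so $h\mapsto k_h$ is constant on the left $G_\rho$-orbits of $G/G_\rho$. Writing $\Sigma:=\bigoplus_{h\in G/G_\rho}h\rho$ for the sum of the distinct non-invertible simples, it follows that $K=\sum_h k_h=\dim\mathrm{Hom}(\rho\otimes\rho,\Sigma)$. The isomorphisms $\psi_g\colon g\otimes\rho\to\rho$ for $g\in G_\rho$ (each unique up to scalar, since $\rho$ is simple) together with the permutation of the summands of $\Sigma$ induced by left tensoring with $g$ (which fixes $\Sigma$ as an object) equip $\mathrm{Hom}(\rho\otimes\rho,\Sigma)$ with a projective representation of $G_\rho$, twisted by a $2$-cocycle $\alpha\in Z^2(G_\rho,\mathbb{C}^\times)$ read off from the associativity constraint via the pentagon axiom.

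It then suffices to prove that this $K$-dimensional representation is \emph{free} over the twisted group algebra $\mathbb{C}_\alpha[G_\rho]$, i.e.\ a direct sum of copies of its regular module; since $\dim_{\mathbb{C}}\mathbb{C}_\alpha[G_\rho]=|G_\rho|$, freeness yields $|G_\rho|\mid K$ at once, and specializes to the near-group statement $|G|\mid k$ of \cite[Theorem A.6]{ost15} when $G_\rho=G$ and $\Sigma=\rho$. This freeness is the crux and the main obstacle. A general module over $\mathbb{C}_\alpha[G_\rho]\cong\prod_i M_{n_i}(\mathbb{C})$ has dimension $\sum_i m_in_i$, which need not be divisible by $|G_\rho|=\sum_i n_i^2$, so one must show every isotypic component occurs with a common multiplicity; the underlying permutation of simples alone controls only the $G_\rho$-orbits on $G/G_\rho$ and recovers nothing beyond $|G_\rho|\mid K^2$. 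I expect to produce the free action from the triple product $\rho\otimes\rho\otimes\rho$ and its associated system of intertwiners, in the spirit of the operator-algebraic analyses of \cite{izumi2017cuntz}, using associativity to construct for each $g\in G_\rho$ invertible operators on the multiplicity space that compose according to $\alpha$; a computation showing that the resulting character vanishes away from the identity of $G_\rho$ would certify freeness and complete the proof. Controlling this cocycle-twisted action, rather than merely its underlying permutation action, is precisely where information beyond the fusion ring is unavoidable.
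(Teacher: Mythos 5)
This statement is one of the paper's open \emph{conjectures} (stated in Section \ref{confusion}); the paper offers no proof of it, only the motivation that Corollary \ref{corgen} gives $|G_\rho|\mid K^2$ by the $d$-number criterion of Lemma \ref{lemma}(iv), together with the analogy to the near-group theorem $|G|\mid k$ of \cite[Theorem A.6]{ost15}. So there is no proof in the paper to compare yours against, and the real question is whether your proposal actually closes the conjecture. It does not. Your preliminary reductions are sound and worth recording: the identity $k_{gh}=k_h$ for $g\in G_\rho$ (obtained by tensoring $\rho\otimes\rho$ with $g$ and comparing constituents), the identification $K=\dim\mathrm{Hom}(\rho\otimes\rho,\Sigma)$ with $\Sigma=\bigoplus_{h\in G/G_\rho}h\rho$, and the correct diagnosis that fusion-ring and $d$-number data alone cannot see past $|G_\rho|\mid K^2$, since that is exactly what Lemma \ref{lemma}(iv) extracts from the minimal polynomial $x^2-Kx-|G_\rho|$.

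The genuine gap is the step you yourself flag as ``the crux and the main obstacle'': freeness of $\mathrm{Hom}(\rho\otimes\rho,\Sigma)$ as a module over the twisted group algebra $\mathbb{C}_\alpha[G_\rho]$. Everything up to that point is organizational; all of the divisibility content of the conjecture is concentrated in that one claim, and your proposal replaces its proof with an expectation (``I expect to produce the free action \ldots a computation showing that the resulting character vanishes away from the identity \ldots would certify freeness''). As you note, an arbitrary $\mathbb{C}_\alpha[G_\rho]$-module has dimension $\sum_i m_in_i$, which need not be divisible by $|G_\rho|=\sum_i n_i^2$, so nothing short of equal (suitably weighted) multiplicities will do, and that is precisely where categorical input beyond the Grothendieck ring must enter. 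There is also a secondary issue you gloss over: even setting up the projective action requires checking that the chosen isomorphisms $\psi_g\colon g\otimes\rho\to\rho$ interact coherently with the permutation of the summands of $\Sigma$, i.e.\ that the scalars arising from associativity assemble into a single $2$-cocycle on $G_\rho$ rather than a more complicated groupoid cocycle indexed by $G/G_\rho$. In short: your write-up is a reasonable research plan whose first steps are correct, but it proves nothing beyond what Corollary \ref{corgen} already gives, and the conjecture remains open exactly at the freeness claim.
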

\end{subsection}

\end{section}

\bibliographystyle{plain}
\bibliography{bib}

\end{document}